\documentclass[lettersize,journal]{IEEEtran}
\usepackage{cite}
\usepackage{amsmath,amssymb,amsfonts}
\usepackage{algorithm}
\usepackage{algcompatible}
\usepackage{graphicx}
\usepackage{textcomp}
\usepackage{float}
\usepackage{multicol}
\usepackage{arydshln}
\usepackage{amsmath}
\usepackage{amsthm}
\usepackage{amssymb}
\usepackage{amsfonts}
\usepackage{graphicx}
\usepackage{mathrsfs}
\usepackage{subfigure}
\usepackage{url}
\usepackage{booktabs}
\usepackage{float}
\usepackage{color}
\usepackage{bm}
\usepackage{graphicx}
\usepackage{epstopdf}
\usepackage{algpseudocode}
\usepackage{arydshln}

\newcommand{\col}{\hbox{col}}
\newtheorem{assmp}{\bf Assumption}

\newtheorem{pro}{\bf Problem}

\newtheorem{rem}{\bf Remark}

\newtheorem{lem}{\bf Lemma}

\newtheorem{thm}{\bf Theorem}

\newtheorem{Example}{\bf Example}

\newenvironment{Prf}{\noindent{\emph{Proof:}}}{\hfill $\Box$\par}

\newcommand{\EQ}{\begin{eqnarray}}
\newcommand{\EN}{\end{eqnarray}}
\newcommand{\EQQ}{\begin{eqnarray*}}
\newcommand{\ENN}{\end{eqnarray*}}

\ifCLASSINFOpdf
\else
\fi

\begin{document}
\title{Data-Driven Output Feedback based Analysis and Control for {\color{black}
Unknown Discrete-Time Linear System}}

\author{Haoyan~Lin~and~Jie~Huang,~\IEEEmembership{Life Fellow,~IEEE}
\thanks{This work was supported in part by the Research Grants Council of the Hong Kong Special Administrative Region under grant No. 14203924.}
\thanks{{\color{black}The authors are with the Department }of Mechanical and Automation Engineering, The Chinese University of Hong Kong, Hong Kong (e-mail:hylin@mae.cuhk.edu.hk; jhuang@mae.cuhk.edu.hk. Corresponding author: Jie Huang.)}}

\maketitle

\begin{abstract}
Using the notion of data informativity,
the existing results have given conditions under which the data are informative for controller designs for various control problems of unknown linear discrete-time systems, and have developed methods to compute such controllers from data. Nevertheless, the existing conditions for computing a dynamic output feedback control law based on the input and output data are somehow stringent. In this paper,  we first focus on
developing data informativity analysis and control for an unknown system with a known input matrix.
It turns out that the informativity conditions for such a system are much milder and the methods for computing state feedback control laws for stabilization, deadbeat control and the linear-quadratic regulator (LQR)
for such a system are much more straightforward. Further, based on the parameterized observer,
we  show that the problem of computing a dynamic output feedback control law for an unknown system can be converted to the problem of designing a state feedback control law for an ancillary system whose input
matrix is known. Therefore, the results of the first part of this paper can be directly used to compute a dynamic output feedback control law for stabilization and deadbeat control for unknown linear discrete-time systems based on the input and output data.  Moreover, we present a dynamic output feedback control law which will asymptotically approach {\color{black}a state feedback LQR solution} to the original unknown system.
\end{abstract}

\begin{IEEEkeywords}
Data-driven control, output feedback, linear quadratic regulator, linear matrix inequality, semidefinite programming.
\end{IEEEkeywords}

\IEEEpeerreviewmaketitle

\section{Introduction}
 Data-driven control has received extensive attention over the past decades. It aims to design a control law using collected data to satisfy various control objectives. There are  two main classes of approaches in data-driven control: indirect data-driven methods and direct data-driven methods. Indirect data-driven methods need to identify the system model from the measured data first \cite{Ljung1987, Overschee1996}. Then, the model-based control techniques can be applied to find the control law. However, if the ultimate objective is the control policy, it might be easier to learn the control policy than the model \cite{Dörfler2023}. The direct data-driven methods aim to bypass the system identification and directly derive the controller from data. In what follows, we focus on the literature utilizing the direct data-driven methods.

For uncertain discrete-time linear systems, Q-learning is a popular method to solve the LQR problem, encompassing methods such as {\color{black}policy iteration (PI)} \cite{Bradtke1994} and {\color{black}value iteration (VI)} \cite{Landelius1997}.
Nevertheless, the conditions on data required in the above literature are more stringent than those for system identification.
This raises the question of whether the control problem can be solved when the required data conditions are no stricter than those for system identification. Motivated by the fundamental lemma presented in \cite{Willems2005}, {\color{black}the authors in \cite{Persis2019}} successfully transformed the stabilization problem and LQR problem for the unknown discrete-time linear system into solving the {\color{black}linear matrix inequality (LMI)} and convex optimization problem constructed with the collected data.
\cite{Persis2019} solved the stabilization problem and LQR problem for the unknown linear discrete-time system under the same data conditions  as those for system identification. However, as pointed out in \cite{Waarde2020}, the results in \cite{Persis2019} assume that the input is persistently exciting of sufficiently high order.
Therefore, \cite{Waarde2020} further focused on finding the minimal condition on data for verifying the control system property and solving the control problem. {\color{black}In this context, data are defined as informative if they enable the assessment of system properties or the direct synthesis of a control law for the physical system from the available data.}
An interesting discovery of \cite{Waarde2020} is that the data conditions for finding the stabilizing feedback gain are weaker than those for system identification. {\color{black}For further discussions on robustness and optimization problems related to unknown discrete-time systems, readers are refer to \cite{Waarde2023, Donge2024, Dörfler2022}.}

For uncertain continuous-time linear systems, integral reinforcement learning (IRL) is an effective technique to approximately solve the optimal control problem.
\cite{Vrabie2009} first studied the LQR problem for unknown linear systems with known input matrix using the {\color{black}PI approach} to iteratively solve {\color{black}a continuous algebraic Riccati equation (CARE)}. \cite{Jiang2012} further extended the results to completely unknown linear systems using a PI-based approach. Since the PI-based approach requires an initial stabilizing feedback gain to start the iteration, \cite{Bian2016} introduced a {\color{black}VI approach} to address the LQR problem for unknown linear systems without needing an initial stabilizing feedback gain.

Most of the literature focuses on the state feedback control law, which assumes
that the input and state data are available. However, for most real applications, the full-state measurement is not available. Thus, it is necessary to develop a methodology to synthesize output feedback control laws based on the input and output data only.
\cite{Lewis2010} considered solving the LQR problem for the unknown discrete-time linear system with the input and output data. Their approach introduced a discounting factor  in the cost function, which helps to diminish the effect of excitation noise bias, the resultant control law  in \cite{Lewis2010} is only suboptimal. To avoid using the discounting factor, \cite{Rizvi2022} developed a dynamic output feedback control law based on a parameterized  observer that is independent of the original system dynamics. Thus, the approach of \cite{Rizvi2022} can be used to design a dynamic output feedback control law that will asymptotically approach  a state feedback {\color{black}LQR solution.} It is {\color{black}noteworthy that \cite{Persis2019}} also studied the stabilization problem for the unknown linear discrete-time system based on input and output data. They converted the  output feedback stabilization problem of an unknown system to a state feedback stabilization problem for an ancillary system whose state comprised a stack of past input and output data from the original system.  However, as mentioned above, the results in \cite{Persis2019} assume that the input is persistently exciting of sufficiently high order. Moreover,
the paper did not take advantage of the fact that the input matrix of the ancillary system is known and their  results rely on more stringent conditions. {\color{black} Additionally, \cite{Waarde2020} developed observer-based dynamic controllers in its input/state/output framework.  Nonetheless, this work primarily focuses on reconstructing a stacked state and transforming the input/output setting into the input/state/output setting, and it only obtained sufficient conditions for the data to be informative for stabilization by dynamic measurement feedback.}

In the paper, we aim to further address the issue of designing a dynamic output feedback control law for such control problems as stabilization, deadbeat control and LQR for unknown discrete-time linear systems based on the  input and output data.
 The main contributions of this paper are summarized {\color{black}as follows:}
\begin{itemize}
\item [1)]
We first focus on developing data informativity analysis and control for an unknown system with a known input matrix.
It turns out that the  informativity conditions for such a system are much milder and the methods for computing state feedback control laws for  stabilization, deadbeat control, and the LQR
for such a system are much more straightforward. While the result of this part is interesting on its own
for the case where the input matrix is known as studied in \cite{Vrabie2009}, it lays a foundation for further studying the output feedback control of an unknown linear system.

\item [2)] Based on the given plant, we propose an unknown ancillary system whose input matrix is known.   We show that this system is stabilizable if the original system is stabilizable.
{\color{black}Further, we show that a state feedback control law that stabilizes this system will lead to a dynamic output feedback control law for the original system.}
Thus, we convert the problem of designing an output feedback control law for the original unknown system to the design of a state feedback control law for an ancillary system whose input
matrix is known. Therefore, the results of the first part of this paper can be directly used to compute a dynamic output feedback control law for such problems as stabilization and deadbeat control for unknown linear discrete-time systems based on the input and output data of the original unknown system.

\item [3)]
{\color{black}We show that by selecting the weights of the cost functions appropriately, the state feedback LQR solution for the ancillary system will asymptotically approach the state feedback LQR solution for the original unknown system. }
Besides, the gain for the state feedback LQR solution of the ancillary system can be obtained by solving an LMI and a convex optimization problem based on the input and output data of the original system under a milder condition on the data.  It is noted that \cite{Persis2019} and \cite{Waarde2020}  did not consider the relationship between the solutions to the LQR problem for the original system and the ancillary system.

\end{itemize}


The rest of the paper is organized as follows. Section \ref{Preliminaries} gives the preliminaries on the data informativity and {\color{black}formulates the problems.}
Sections \ref{data infor stabilization} and \ref{data infor LQR} present the main results on data informativity for stabilization by state feedback and for the LQR problem, respectively, in the case where the input and state data are available and the input matrix is known.
Section \ref{data infor io} considered a more general case of data informativity where only the input and output data are available. Section \ref{conclusion} concludes the paper.

\indent\textbf{Notation}
$\mathbb{R}$ represent the sets of real numbers.
$\mbox{col} (x_1,\cdots,x_N )=[x_1^T,\cdots,x_N^T]^T  \in \mathbb{R}^{Nn}$ with $x_i \in \mathbb{R}^n, i=1,\cdots,N$.
$I_{N}$ represents an $N \times N$ identity matrix. $\textup{tr}(A)$ represents the trace of a square matrix $A$.
{\color{black}
$\lambda(A)$ denotes the spectrum of $A$.}
For a positive semidefinite matrix $Q$, $\sqrt{Q}$ denotes the positive semidefinite square root.
A nilpotent matrix is a square matrix $A$ such that $A^{k}=\bf{0}$ for some positive integer $k$.

\section{Preliminaries and Problem Formulation}\label{Preliminaries}
{\color{black}
\indent In this section, we will summarize the main results on data informativity as presented in \cite{Waarde2020} and formulate the problems.}

\subsection{Data Informativity \cite{Waarde2020}} \label{sec2.1}
Let $\Sigma$ be the set of {\color{black}all discrete-time linear systems} of the form
{\color{black}\begin{subequations}  \label{lisys2}
	\begin{align}
		x(t+1)&=Ax(t)+Bu(t) \label{lisys}\\
		y(t)&=Cx(t)
	\end{align}
\end{subequations}}where $x(t) \in \mathbb{R}^n$ is the state, $u(t) \in \mathbb{R}^m$ is the input, and  $y(t) \in \mathbb{R}^p$ is the output.
Let the ``true'' system be represented by the triplet $(A_s, B_s, C_s)$ which is unknown.
{\color{black}In this subsection, we assume that the input and state data can be collected on $k$ time intervals $\{0,1,\cdots, T_i \}$ for $i=1,2,\cdots,k$.} Let
\begin{subequations}
\begin{align}
U^i_{-}=&\begin{bmatrix}
u^i(0) & u^i(1) & \cdots &  u^i(T_i-1)
\end{bmatrix}\\
X^i=&\begin{bmatrix}
x^i(0) & x^i(1) & \cdots &  x^i(T_i)
\end{bmatrix}
\end{align}
\end{subequations}
denote the input and the state data on the $i$th interval. Define
\begin{subequations}
\begin{align}
X^i_{-}=&\begin{bmatrix}
x^i(0) & x^i(1) & \cdots &  x^i(T_i-1)
\end{bmatrix}\\
X^i_{+}=&\begin{bmatrix}
x^i(1) & x^i(2) & \cdots &  x^i(T_i)
\end{bmatrix}
\end{align}
\end{subequations}
Then, we have $X^i_{+} = A_s X^i_{-} + B_s  U^i_{-} $. {\color{black}Also, let}
\begin{subequations}
\begin{align}
U_{-}=&\begin{bmatrix}
U^1_{-} & \cdots &  U^k_{-}
\end{bmatrix},
X=\begin{bmatrix}
X^1 &  \cdots &  X^k
\end{bmatrix}\\
X_{-}=&\begin{bmatrix}
X^1_{-} & \cdots &  X^k_{-}
\end{bmatrix},
X_{+}=\begin{bmatrix}
X^1_{+} & \cdots &  X^k_{+}
\end{bmatrix}
\end{align}
\end{subequations}
{\color{black}where $N=\sum_{i=1}^{k} T_i$, $X_{+}, X_{-} \in \mathbb{R}^{n \times N}$, $U_{-} \in \mathbb{R}^{m \times N}$. Then, the data set is defined as $\mathcal{D}=(U_{-},X)$.}

{\color{black}
Let
\begin{align*}
	\Sigma^{0}_{i/s} & =\{ (A_0, B_0)  | {\bf{0}} = \begin{bmatrix} A_0 & B_0 \end{bmatrix} \begin{bmatrix}X_{-} \\ U_{-} \end{bmatrix} \}  \\
	\Sigma_{i/s} & =\{(A , B ) | X_+ = \begin{bmatrix} A & B \end{bmatrix} \begin{bmatrix}X_{-} \\ U_{-} \end{bmatrix} \} \\
\Sigma_{K}&=\{(A, B ) |A+BK \text{ is stable}\} \\
\Sigma_{K}^{\text{nil}}&=\{(A, B ) |A+BK \text{ is nilpotent}\}
 \end{align*}}where {\color{black}$K \in  \mathbb{R}^{m \times n}$.} Then, the data $\mathcal{D}$ are said to be informative for system identification if $\Sigma_{i/s}=\{(A_s, B_s)\}$. The data $\mathcal{D}$ are informative for system identification if and only if \cite{Waarde2020}
\begin{equation}
	\textup{rank} \begin{bmatrix}
		X_{-}\\
		U_{-}	
	\end{bmatrix}=n+m.
\end{equation}
{\color{black}
The data $\mathcal{D}$ are said to be informative for stabilization by state feedback if there exists a $K \in  \mathbb{R}^{m \times n}$ such that
$\Sigma_{i/s} \subseteq \Sigma_{K}$.}
A state feedback  controller of the form $u(t)=Kx(t)$ that will
drive the state of the system  to the origin in a finite time is called a deadbeat controller.
A deadbeat controller has the property that $(A_s+B_sK)^tx_0=\bf{0}$ for all $t \geq n$ and all $x_0 \in \mathbb{R}^n$.   {\color{black}Thus, $u(t)=Kx(t)$} is a deadbeat controller if and only if $A_s+B_sK$ is nilpotent.
The data $\mathcal{D}$ are informative for deadbeat control if there exists a feedback gain $K \in  \mathbb{R}^{m \times n}$ such that $\Sigma_{i/s} \subseteq \Sigma_{K}^{\text{nil}}$.

The  necessary and sufficient conditions for  $\mathcal{D}$ to be  informative for stabilization by state feedback is as follows.

\begin{thm}(Theorem 16 of \cite{Waarde2020})\label{thm1}
	The data $\mathcal{D}$ are informative for stabilization by state feedback if and only if the matrix $X_{-}$ has full row rank and there exists a right inverse $X_{-}^{\dagger}$ of $X_{-}$ such that $X_{+}X_{-}^{\dagger}$ is stable.
\end{thm}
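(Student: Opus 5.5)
The plan is to prove the two directions separately, with the ``only if'' direction carrying the main work.

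For sufficiency, assume $X_{-}$ has full row rank and there is a right inverse $X_{-}^{\dagger}$ with $X_{+}X_{-}^{\dagger}$ stable. Set $K = U_{-}X_{-}^{\dagger}$, so that
\begin{equation*}
\begin{bmatrix} X_{-} \\ U_{-}\end{bmatrix} X_{-}^{\dagger} = \begin{bmatrix} I_n \\ K \end{bmatrix}.
\end{equation*}
Then for every $(A,B) \in \Sigma_{i/s}$ we get
\begin{equation*}
A+BK = \begin{bmatrix} A & B\end{bmatrix}\begin{bmatrix} X_{-} \\ U_{-}\end{bmatrix} X_{-}^{\dagger} = X_{+}X_{-}^{\dagger},
\end{equation*}
which is stable. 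Hence $\Sigma_{i/s}\subseteq\Sigma_K$.

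For necessity, suppose some $K$ satisfies $\Sigma_{i/s}\subseteq \Sigma_K$. Any $(A,B)\in\Sigma_{i/s}$ can be written as $(A_s+A_0, B_s+B_0)$ with $(A_0,B_0)\in\Sigma^0_{i/s}$, and $\Sigma^0_{i/s}$ is a linear subspace. Therefore $A_s+B_sK+\alpha(A_0+B_0K)$ must be stable for every $\alpha\in\mathbb{R}$ and every $(A_0,B_0)\in\Sigma^0_{i/s}$.

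The key step, and the one I expect to be the main obstacle, is to deduce $A_0+B_0K=0$ for all $(A_0,B_0)\in\Sigma^0_{i/s}$. The natural tool is the spectral radius. Suppose $M:=A_0+B_0K\neq 0$. First note that $\Sigma^0_{i/s}$ is closed under left multiplication: if $[A_0\ B_0]$ annihilates the data matrix, so does $S[A_0\ B_0]$ for any $S\in\mathbb{R}^{n\times n}$. So I may replace $M$ by $SM$ and choose $S$ so that $SM$ has a nonzero eigenvalue; for example, pick $v$ with $Mv\neq 0$ and take $S = vw^T$ with $w^TMv\neq 0$. This gives a rank-one $SM$ with nonzero trace. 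With such an $M$, the eigenvalues of $A_s+B_sK+\alpha M$ cannot stay in the unit disk as $\alpha\to\infty$: dividing by $\alpha$, they approach the eigenvalues of $M$, one of which is nonzero. This contradicts stability. Hence $[A_0\ B_0]\begin{bmatrix} I\\ K\end{bmatrix}=0$ for all $(A_0,B_0)\in\Sigma^0_{i/s}$.

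In terms of left kernels this says that $\ker\!\begin{bmatrix} X_{-}^T & U_{-}^T\end{bmatrix}$ is contained in $\ker\!\begin{bmatrix} I & K^T\end{bmatrix}$. So every row-space vector of $\begin{bmatrix} I & K\end{bmatrix}$ lies in the row space of the data matrix, and there exists $X_{-}^{\dagger}$ with
\begin{equation*}
\begin{bmatrix} X_{-} \\ U_{-}\end{bmatrix} X_{-}^{\dagger} = \begin{bmatrix} I \\ K\end{bmatrix}.
\end{equation*}
It follows that $X_{-}X_{-}^{\dagger}=I$, so $X_{-}$ has full row rank and $X_{-}^{\dagger}$ is a right inverse. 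Finally, applying the first display to the true system gives $X_{+}X_{-}^{\dagger}=A_s+B_sK$, which is stable since $(A_s,B_s)\in\Sigma_{i/s}$.
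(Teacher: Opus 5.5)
Your proof is correct and follows essentially the argument the paper relies on for this cited result: Lemma 15 and Theorem 16 of \cite{Waarde2020}, which the paper mirrors in its own Lemma~\ref{lem1} and the remark after it. Scaling along $\Sigma^0_{i/s}$, together with closure of $\Sigma^0_{i/s}$ under left multiplication, forces $A_0+B_0K=\mathbf{0}$; one then picks $X_{-}^{\dagger}$ with $\begin{bmatrix} X_{-} \\ U_{-} \end{bmatrix} X_{-}^{\dagger} = \begin{bmatrix} I_n \\ K \end{bmatrix}$. Your rank-one multiplier $v(Mv)^T$ does the job of the multiplier $(A_0+B_0K)^T$ used there, which makes $(A_0+B_0K)^T(A_0+B_0K)$ symmetric and nilpotent, hence zero. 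The only blemish is wording: the inclusion you derive and need is the column-space inclusion $\textup{im}\begin{bmatrix} I_n \\ K \end{bmatrix} \subseteq \textup{im}\begin{bmatrix} X_{-} \\ U_{-} \end{bmatrix}$, not a statement about ``row spaces'' of $\begin{bmatrix} I & K \end{bmatrix}$ and of the data matrix.
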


\begin{rem}
	It is noted that the condition on the data required in the stabilization problem by state feedback is weaker than that of system identification.
\end{rem}

However, Theorem \ref{thm1} does not tell how to find a control gain $K$. The following result addresses this issue.

\begin{thm}(Theorem 17 of \cite{Waarde2020}) \label{thm2}
The data $\mathcal{D}$ are informative for stabilization by state feedback if and only if there exists a matrix $\Theta \in \mathbb{R}^{N\times n}$ satisfying
\begin{equation} \label{LMIWaarde}
{\color{black}X_{-} \Theta= (X_{-} \Theta )^T} \textup{ and }
\begin{bmatrix}
X_{-} \Theta & X_{+} \Theta\\
\Theta^T X_{+}^T & X_{-} \Theta
\end{bmatrix} {\color{black}\succ 0.}
\end{equation}
Moreover, $K$ satisfies $\Sigma_{i/s} \subseteq \Sigma_{K}$ if and only if $K=U_{-} \Theta (X_{-} \Theta)^{-1}$.
\end{thm}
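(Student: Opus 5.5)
Taking Theorem \ref{thm1} as given, I will prove both directions via the Lyapunov characterization of stability, with the change of variables $P=X_{-}\Theta$. For sufficiency, suppose $\Theta$ satisfies \eqref{LMIWaarde}. The $(1,1)$ block gives $P:=X_{-}\Theta \succ 0$. Then $X_{-}$ has full row rank, and $\Theta P^{-1}$ is a right inverse of $X_{-}$, since $X_{-}\Theta P^{-1}=I$. By a Schur complement, \eqref{LMIWaarde} is equivalent to
\[
P - X_{+}\Theta P^{-1}\Theta^T X_{+}^T \succ 0 .
\]
Set $M:=X_{+}\Theta P^{-1}$. Then $P-MPM^T\succ 0$, so $M^T$, and hence $M$, is Schur stable. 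Theorem \ref{thm1} now gives informativity. For an explicit controller, let $K=U_{-}\Theta P^{-1}$. Every $(A,B)\in\Sigma_{i/s}$ satisfies $X_+=AX_-+BU_-$, so
\[
X_{+}\Theta P^{-1}=(AX_{-}+BU_{-})\Theta P^{-1}=A+BK .
\]
Thus $A+BK=M$ is stable for all such $(A,B)$, i.e.\ $\Sigma_{i/s}\subseteq\Sigma_K$. This also proves the ``if'' part of the characterization of $K$.

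For necessity, suppose the data are informative. By Theorem \ref{thm1}, $X_{-}$ has full row rank and some right inverse $X_{-}^{\dagger}$ makes $M=X_{+}X_{-}^{\dagger}$ stable. Since $M^T$ is also stable, there is $P\succ0$ with $P-MPM^T\succ0$. Define $\Theta:=X_{-}^{\dagger}P$. Then $X_{-}\Theta=P$ is symmetric and positive definite, and $X_{+}\Theta P^{-1}=M$. Reversing the Schur complement argument yields \eqref{LMIWaarde}.

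The delicate part is the ``only if'' claim for $K$: any $K$ with $\Sigma_{i/s}\subseteq\Sigma_K$ must equal $U_{-}\Theta(X_{-}\Theta)^{-1}$ for some feasible $\Theta$. I would first establish a lemma, taking as known the argument from \cite{Waarde2020}: if $\Sigma_{i/s}\subseteq\Sigma_K$, then $A+BK$ is the same matrix for every $(A,B)\in\Sigma_{i/s}$. The intended argument runs as follows.
\begin{itemize}
\item $\Sigma_{i/s}$ is the affine set $(A_s,B_s)+\Sigma^0_{i/s}$.
\item For any $(A_0,B_0)\in\Sigma^0_{i/s}$ and $\alpha\in\mathbb R$, the matrix $A_s+B_sK+\alpha(A_0+B_0K)$ must be stable. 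Its spectral radius must therefore stay below $1$ for all $\alpha$.
\item Applying this to $(A_0,B_0)$ and to $(A_0,B_0)$ combined with rank-one perturbations, one concludes $A_0+B_0K=0$. This is equivalent to $\begin{bmatrix}I\\K\end{bmatrix}$ having columns in $\operatorname{im}\begin{bmatrix}X_{-}\\U_{-}\end{bmatrix}$.
\end{itemize}
Hence there is $\Gamma$ with $X_{-}\Gamma=I$ and $U_{-}\Gamma=K$. Then $X_{+}\Gamma=A_s+B_sK$ is stable. Picking $P\succ0$ solving the Lyapunov inequality for this matrix and setting $\Theta=\Gamma P$ gives a feasible $\Theta$ with $U_{-}\Theta(X_{-}\Theta)^{-1}=K$.

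I expect the main obstacle to be the step showing $A_0+B_0K=0$, i.e.\ that unbounded affine directions force the closed-loop map to be constant. The spectral-radius argument along lines needs care, because a nonzero nilpotent direction does not blow up eigenvalues trivially. If necessary I would defer to the corresponding lemma in \cite{Waarde2020}, which the paper permits citing.
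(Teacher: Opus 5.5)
Your proposal is correct and follows essentially the route the paper uses for its known-$B_s$ analogue, Theorem~\ref{thm7}: a Schur complement with $P=X_{-}\Theta$ and right inverse $\Theta P^{-1}$, and conversely $\Theta=X_{-}^{\dagger}P$ from a Lyapunov inequality for the transpose; the statement itself is only cited from \cite{Waarde2020}, whose Lemma~15 you may legitimately invoke for the gain characterization. The step you flag closes as in the paper's proof of Lemma~\ref{lem1}: $\Sigma^0_{i/s}$ is invariant under left multiplication, i.e.\ $(SA_0,SB_0)\in\Sigma^0_{i/s}$ for every $S\in\mathbb{R}^{n\times n}$, so your line argument makes $S(A_0+B_0K)$ nilpotent for every $S$, and the choice $S=(A_0+B_0K)^T$ gives a symmetric nilpotent, hence zero, matrix.
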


The necessary and sufficient conditions for  $\mathcal{D}$ to be informative for deadbeat control are as follows:

\begin{thm}(Theorem 21 of \cite{Waarde2020})\label{thmdeadbeat}
The data $\mathcal{D}$ are informative for deadbeat control if and only if the matrix $X_{-}$ has full row rank and there exists a right inverse $X_{-}^{\dagger}$ of $X_{-}$ such that $X_{+}X_{-}^{\dagger}$ is nilpotent.
\end{thm}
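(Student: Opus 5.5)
The plan is to follow the structure used for Theorem~\ref{thm1}, with stability replaced by nilpotency. The argument has two directions.

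\emph{Sufficiency.} Suppose $X_{-}$ has full row rank and there is a right inverse $X_{-}^{\dagger}$ with $X_{+}X_{-}^{\dagger}$ nilpotent. Set $K:=U_{-}X_{-}^{\dagger}$ and take any $(A,B)\in\Sigma_{i/s}$. Then
\begin{equation*}
A+BK=\begin{bmatrix} A & B\end{bmatrix}\begin{bmatrix} X_{-}\\ U_{-}\end{bmatrix}X_{-}^{\dagger}=X_{+}X_{-}^{\dagger},
\end{equation*}
using $X_{-}X_{-}^{\dagger}=I_n$. This matrix is nilpotent, so $\Sigma_{i/s}\subseteq\Sigma_K^{\text{nil}}$.

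\emph{Necessity.} Suppose some $K$ satisfies $\Sigma_{i/s}\subseteq\Sigma_K^{\text{nil}}$, and fix $(A_s,B_s)\in\Sigma_{i/s}$. Then $(A_s+A_0,\,B_s+B_0)\in\Sigma_{i/s}$ for every $(A_0,B_0)\in\Sigma^0_{i/s}$. First I will show that $\Sigma^0_{i/s}$ only contains pairs with $A_0+B_0K=0$.

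Take $(A_0,B_0)\in\Sigma^0_{i/s}$ and put $M=A_s+B_sK$, which is nilpotent, and $\Delta=A_0+B_0K$. Since $\Sigma^0_{i/s}$ is a subspace, $M+\alpha\Delta$ is nilpotent for every $\alpha\in\mathbb{R}$. Hence every coefficient of the characteristic polynomial of $M+\alpha\Delta$, apart from the leading one, vanishes identically in $\alpha$. In particular $\textup{tr}(M+\alpha\Delta)^j=0$ for $j=1,\dots,n$ and all $\alpha$. The coefficient of $\alpha^j$ in $\textup{tr}(M+\alpha\Delta)^j$ is $\textup{tr}(\Delta^j)$, so $\textup{tr}(\Delta^j)=0$ for all $j$, and $\Delta$ is nilpotent. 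This alone does not give $\Delta=0$.

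To finish, I use that the nullspace condition is invariant under left multiplication. If $(A_0,B_0)\in\Sigma^0_{i/s}$, then $(SA_0,SB_0)\in\Sigma^0_{i/s}$ for any $S\in\mathbb{R}^{n\times n}$, because $\begin{bmatrix}SA_0 & SB_0\end{bmatrix}\begin{bmatrix}X_{-}\\ U_{-}\end{bmatrix}=0$. Applying the previous step to these pairs shows $S\Delta$ is nilpotent for every $S$. Taking $S=\Delta^T$ gives $\textup{tr}(\Delta^T\Delta)=0$, so $\Delta=0$.

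Therefore $\begin{bmatrix}I & K^T\end{bmatrix}^T$ annihilates every $\begin{bmatrix}A_0 & B_0\end{bmatrix}$ in the left kernel of $\begin{bmatrix}X_{-}\\ U_{-}\end{bmatrix}$. Hence the columns of $\begin{bmatrix} I\\ K\end{bmatrix}$ lie in the image of $\begin{bmatrix}X_{-}\\ U_{-}\end{bmatrix}$, that is, there is $X_{-}^{\dagger}$ with
\begin{equation*}
\begin{bmatrix}X_{-}\\ U_{-}\end{bmatrix}X_{-}^{\dagger}=\begin{bmatrix} I\\ K\end{bmatrix}.
\end{equation*}
Consequently $X_{-}X_{-}^{\dagger}=I_n$, so $X_{-}$ has full row rank. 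Also $X_{+}X_{-}^{\dagger}=A_s+B_sK$, which is nilpotent.

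The main obstacle is the step $\Delta=0$. It needs a scaling-plus-trace argument, and the left-multiplication freedom is what rules out a nilpotent but nonzero perturbation. This mirrors the corresponding step in the proof of Theorem~\ref{thm1}, where stability of $M+\alpha\Delta$ for all $\alpha$ forces $\Delta=0$.
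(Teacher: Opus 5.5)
Your proof is correct and follows the route the paper uses for its known-$B_s$ analogue (Theorem~\ref{thmdeadbeat2} via Lemma~\ref{lem1}, which the paper models on Lemma 15 of \cite{Waarde2020}; the paper itself only cites the statement you proved). That route is: perturb the true system by $\alpha(A_0,B_0)\in\Sigma^0_{i/s}$, show $A_0+B_0K$ is nilpotent, force it to zero with the left-multiplication/$\Delta^T\Delta$ trick, and read off a right inverse from $\textup{im}\begin{bmatrix} I_n\\ K\end{bmatrix}\subseteq\textup{im}\begin{bmatrix} X_{-}\\ U_{-}\end{bmatrix}$. The one small difference is that you get nilpotency of $\Delta$ directly from the vanishing trace coefficients, whereas the paper (like \cite{Waarde2020}) notes that deadbeat informativity implies stabilization informativity, since nilpotent matrices are stable, and reuses the spectral-radius scaling argument.
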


Next, we will introduce the LQR problem of system \eqref{lisys} defined as follows.  Given system \eqref{lisys}, for any initial condition {\color{black}$x_0 \in \mathbb{R}^n$}, find an input $u^*$ that minimizes the cost function
\begin{equation} J(x_0,u)=\sum_{t=0}^{\infty} x^T(t)Qx(t)+u^T(t)Ru(t)
\end{equation}
where {\color{black}$Q \succeq 0$ and $R \succ 0$}.

{\color{black}
\begin{thm}(Corollary 2.1 of \cite{Molinari1975} and Theorem 23 of \cite{Waarde2020}) \label{thm3}
Let $Q=Q^T\succeq 0$ and $R=R^T\succ 0$. The following statements hold.

1. If the pair $(A,B)$ is stabilizable, then there exists a unique largest real symmetric solution $P^{+}$ to \eqref{DARE} in the sense that $P^{+} \succeq P$ for every real symmetric $P$ satisfying \eqref{DARE}.  The matrix $P^{+}$ is positive semidefinite.

2. The pair $(A,B)$ is stabilizable and the pair $(A, Q)$ (or equivalently $(A, \sqrt{Q})$) is observable for every eigenvalue of $A$ on the unit circle if and only if the following discrete-time algebraic Riccati equation (DARE) admits {\color{black}a real symmetric positive semidefinite solution $P^*$}
\begin{equation}\label{DARE}
P=A^TPA-A^TPB(R+B^TPB)^{-1}B^TPA+Q
\end{equation}
and the optimal input is given by $u^*(t)=K^*x(t)$ with
\begin{equation} \label{Kstar}
K^*=-(R+B^TP^*B)^{-1}B^TP^*A
\end{equation}
where $K^*$ is such that $A+BK^*$ is stable.
\end{thm}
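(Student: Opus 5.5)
Statement 1 is the classical existence result for a maximal DARE solution. I would prove it by a monotone policy-iteration (Kleinman/Hewer) construction. Stabilizability of $(A,B)$ gives some $K_0$ with $A+BK_0$ stable. Given a stabilizing $K_j$, define $P_j\succeq 0$ as the unique solution of the Lyapunov equation
\begin{equation*}
P_j=(A+BK_j)^TP_j(A+BK_j)+Q+K_j^TRK_j,
\end{equation*}
and set $K_{j+1}=-(R+B^TP_jB)^{-1}B^TP_jA$. The key identity comes from completing the square:
\begin{equation*}
(A+BK)^TP(A+BK)+K^TRK=A^TPA-A^TPB(R+B^TPB)^{-1}B^TPA+(K-\bar K)^T(R+B^TPB)(K-\bar K),
\end{equation*}
where $\bar K=-(R+B^TPB)^{-1}B^TPA$.

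From this identity I would show three things by the standard Lyapunov comparison:
\begin{itemize}
\item[(a)] $A+BK_{j+1}$ is stable, since $P_j$ is a Lyapunov function for it. Some care is needed here because $Q$ is only semidefinite; I would argue via an eigenvector $v$ with $|\lambda|\ge1$ and obtain a contradiction using $(K_j-K_{j+1})v$.
\item[(b)] The sequence $P_j$ is monotonically nonincreasing.
\item[(c)] For any real symmetric $P$ solving \eqref{DARE}, $P_j\succeq P$. This follows because $P_j-P$ satisfies a Lyapunov equation for the stable matrix $A+BK_j$ with a positive semidefinite right-hand side $(K_j-\bar K)^T(R+B^TPB)(K_j-\bar K)$. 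This step also needs $R+B^TPB\succ0$, which I would take as part of the meaning of "solution".
\end{itemize}
Since the sequence is bounded below by $0$, it converges to a limit $P^+\succeq0$. Passing to the limit shows that $P^+$ satisfies \eqref{DARE}, and (c) gives $P^+\succeq P$ for every solution $P$. Uniqueness of the largest solution is immediate from antisymmetry of $\succeq$.

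For statement 2, sufficiency: assume stabilizability and that there are no unobservable eigenvalues of $(A,\sqrt Q)$ on the unit circle. The candidate solution is $P^*=P^+\succeq0$. I must show that $A+BK^*$ is stable. We already know $|\lambda|\le1$ for its eigenvalues, as the limit of stable matrices. Suppose $(A+BK^*)v=\lambda v$ with $|\lambda|=1$. From the closed-loop form of \eqref{DARE},
\begin{equation*}
v^*P^*v=|\lambda|^2v^*P^*v+v^*Qv+v^*K^{*T}RK^*v,
\end{equation*}
so $\sqrt Qv=0$ and $K^*v=0$. Then $Av=\lambda v$ with $\sqrt Q v=0$, which contradicts the observability assumption on the unit circle. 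Optimality of $u^*=K^*x$ then follows by summing the completed-square identity along trajectories:
\begin{equation*}
J(x_0,u)=x_0^TP^*x_0+\sum_t\|u-K^*x\|^2_{R+B^TP^*B},
\end{equation*}
which holds for inputs making $x(t)\to0$. Inputs with finite cost but no convergence need a separate argument, for which I would use the detectability-type assumption.

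For necessity, suppose a solution $P^*\succeq0$ exists with $A+BK^*$ stable, as the statement implicitly asserts. Then $K^*$ itself witnesses stabilizability. If $Av=\lambda v$ with $|\lambda|=1$ and $\sqrt Qv=0$, then
\begin{equation*}
v^*P^*v=v^*P^*v-v^*A^TP^*B(R+B^TP^*B)^{-1}B^TP^*Av,
\end{equation*}
which forces $B^TP^*Av=0$, i.e.\ $K^*v=0$. Hence $(A+BK^*)v=\lambda v$, contradicting stability.

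The main obstacle is the stability of the limit closed loop on the unit circle, together with the handling of semidefinite $Q$ throughout. In the paper this is cited from Molinari and Theorem 23 of \cite{Waarde2020}, so in practice I would invoke those references rather than reprove every detail.
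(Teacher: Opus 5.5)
The paper gives no proof of this statement; it only cites Molinari and Theorem 23 of \cite{Waarde2020}. Your Hewer-type iteration for part 1 and your two eigenvector arguments in part 2 are correct and would give a self-contained proof. These are stability of $A+BK^*$ for $P^*=P^+$, and $K^*v=0$ for the converse. You are also right that the converse must be read as requiring a positive semidefinite solution with $A+BK^*$ stable.

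The genuine gap is the optimality of $u^*=K^*x$. You defer inputs with $J(x_0,u)<\infty$ but $x(t)\not\to0$ to a ``detectability-type assumption''. But part 2 only assumes that the eigenvalues of $A$ on the unit circle are observable, and no argument can handle this case because the claim fails there. Take $A=2$, $B=1$, $Q=0$, $R=1$. The hypotheses hold, since $(A,B)$ is controllable and $A$ has no eigenvalue on the unit circle. The DARE reduces to $p^2-3p=0$, so $P^+=3$, $K^*=-3/2$ and $A+BK^*=1/2$. Then $u=K^*x$ costs $3x_0^2$, whereas $u\equiv0$ costs $0$ (with $x(t)=2^tx_0$ diverging).

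The statement is only true for the LQR problem as posed in \cite{Waarde2020}, where admissible inputs must satisfy $\lim_{t\to\infty}x(t)=0$; the paper's informal problem statement omits this constraint. Under that formulation your identity $J(x_0,u)=x_0^TP^*x_0+\sum_t\|u(t)-K^*x(t)\|^2_{R+B^TP^*B}$ already completes the proof, including uniqueness of the optimal input. So the extra argument should be dropped rather than supplied. Your free-endpoint version would need full detectability of $(A,\sqrt{Q})$, which the paper only imposes from Theorem \ref{thm8} on.

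A smaller point: in step (c) you need not build $R+B^TPB\succ0$ into the definition of a solution. It holds for every real symmetric solution of \eqref{DARE}, for which $R+B^TPB$ must be invertible anyway. For $|z|=1$ with $z\notin\lambda(A)$, let $\Phi(z)=(zI_n-A)^{-1}B$ and $W(z)=I_m-\bar K\Phi(z)$, with your $\bar K$. Substitute $x=\Phi(z)u$, so that $Ax+Bu=zx$ and hence $x^*Px=(Ax+Bu)^*P(Ax+Bu)$. Putting this into the DARE gives $R+\Phi(z)^*Q\Phi(z)=W(z)^*(R+B^TPB)W(z)$, where ${}^*$ denotes conjugate transpose. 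The left side is $\succeq R\succ0$, so $W(z)$ is invertible and $R+B^TPB\succ0$ by Sylvester's law of inertia. Thus your maximality argument covers every real symmetric solution, exactly as part 1 asserts.
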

 Suppose the LQR problem is solvable for $(A, B, Q, R)$ by the optimal feedback gain $K^*$ given by \eqref{Kstar}.  Then,  for any given $K$, define
\begin{equation}
\Sigma_{K}^{Q,R}=\{(A, B) | K \text{ is the optimal gain for } (A, B, Q, R) \}.
\end{equation}
That is, $\Sigma_{K}^{Q,R}$ is  the set of all systems of the form \eqref{lisys} for which $K$ is the optimal feedback gain corresponding to $Q$ and $R$.

Given matrices $Q$ and $R$, the data $\mathcal{D}$ are informative for LQR if there exists $K$ such that $\Sigma_{i/s} \subseteq \Sigma_{K}^{Q,R}$.
Now we are ready to rephrase the main results of \cite{Waarde2020} on  data informativity for LQR.

\begin{thm}(Theorem 28 of \cite{Waarde2020}) \label{thm4}
Let $Q=Q^T$ be positive semidefinite and $R=R^T$ be positive definite. The data $\mathcal{D}$ are informative for LQR if and only if at least one of {\color{black}the following two conditions hold:}
\begin{itemize}
 \item [1.] The data $\mathcal{D}$ are informative for system identification. Equivalently, there exists a $\begin{bmatrix}
V_1 & V_2
\end{bmatrix}$ such that $\begin{bmatrix}
X_{-}\\
U_{-}
\end{bmatrix} \begin{bmatrix}
V_1 & V_2
\end{bmatrix}=\begin{bmatrix}
I_n & \bf{0}\\
\bf{0} & I_m
\end{bmatrix}$. Moreover, the LQR problem is solvable for $(A_s, B_s, Q, R)$, where $A_s=X_{+}V_1$ and $B_s=X_{+}V_2$.

\item [2.] {\color{black}There exists a} $\Theta \in \mathbb{R}^{N\times n}$ such that $X_{-} \Theta= (X_{-} \Theta )^T$, $U_{-} \Theta=\bf{0}$,
\begin{equation} \nonumber
	\begin{bmatrix}
		X_{-} \Theta & X_{+} \Theta\\
		\Theta^T X_{+}^T & X_{-} \Theta
	\end{bmatrix} \succ 0
\end{equation}
and $QX_{+} \Theta=\bf{0}$.
\end{itemize}
\end{thm}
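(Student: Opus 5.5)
We need to prove Theorem 28 of \cite{Waarde2020}: the data are informative for LQR if and only if condition 1 or condition 2 holds.

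\emph{Sufficiency.} Under condition 1, $\Sigma_{i/s}=\{(A_s,B_s)\}$ with $A_s=X_+V_1$ and $B_s=X_+V_2$. Solvability of the LQR problem gives $K^*$ by Theorem \ref{thm3}, and then trivially $\Sigma_{i/s}\subseteq\Sigma_{K^*}^{Q,R}$.

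Under condition 2, set $K=\mathbf 0$. Since $U_-\Theta=\mathbf 0$, we have $K=U_-\Theta(X_-\Theta)^{-1}=\mathbf 0$, so Theorem \ref{thm2} shows that every $(A,B)\in\Sigma_{i/s}$ has $A$ stable. Moreover $X_+\Theta=AX_-\Theta$, and $X_-\Theta$ is invertible, so $QA=QX_+\Theta(X_-\Theta)^{-1}=\mathbf 0$.

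Now take $P=Q$. Then $A^TPA=A^TQA=\mathbf 0$ and $B^TPA=B^TQA=\mathbf 0$, so $P=Q$ solves the DARE \eqref{DARE}. This gives $K^*=-(R+B^TQB)^{-1}B^TQA=\mathbf 0$. Because $A$ is stable, $(A,B)$ is stabilizable and no eigenvalue of $A$ lies on the unit circle, so Theorem \ref{thm3} applies. The remaining point is that the optimal gain comes from the maximal (stabilizing) solution. Here $A+BK^*=A$ is stable, and a stabilizing PSD solution of the DARE is unique and equals $P^+$. Hence $0$ is the optimal gain for every $(A,B)\in\Sigma_{i/s}$. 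A direct check confirms this: with $u=0$ the cost is $x_0^TQx_0$, since $Qx(t)=QAx(t-1)=\mathbf 0$ for $t\ge1$, and this is a lower bound for any input.

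\emph{Necessity.} Suppose $\Sigma_{i/s}\subseteq\Sigma_K^{Q,R}$ and condition 1 fails. Every system in $\Sigma_{i/s}$ is stabilized by $K$, so Theorem \ref{thm1} gives that $X_-$ has full row rank. Since identification fails, there is a nonzero pair $(A_0,B_0)\in\Sigma^0_{i/s}$. Because $X_-$ has full row rank, $B_0\neq\mathbf 0$.

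Fix $(A,B)\in\Sigma_{i/s}$ with stabilizing DARE solution $P$. Then $(A+\alpha A_0,B+\alpha B_0)\in\Sigma_{i/s}$ for all $\alpha$, and $K$ must be optimal for every such pair. I would write the optimality conditions as
\[
(R+B^TPB)K=-B^TPA
\]
together with the Lyapunov form of the DARE,
\[
P=(A+BK)^TP(A+BK)+K^TRK+Q.
\]
Along the perturbation, $A+BK$ moves to $A+BK+\alpha(A_0+B_0K)$, and the corresponding $P(\alpha)$ is determined by this Lyapunov equation with $K$ fixed.

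Differentiating in $\alpha$ (or comparing polynomial coefficients) should force $K=\mathbf 0$ and $P(A_0+B_0K)=\mathbf 0$. In particular $B_0^TPA=\mathbf 0$ must hold for all admissible $B_0$, and $A_0=-B_0K$ can be chosen freely subject to $[A_0\ B_0][X_-;U_-]=\mathbf 0$. Deriving this is the main obstacle: it requires careful use of the freedom in $(A_0,B_0)$, following the argument of \cite{Waarde2020}, which first shows $K=\mathbf 0$ and then $QA=\mathbf 0$ on $\Sigma_{i/s}$.

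Once $K=\mathbf 0$ and $QX_+=QAX_-$ are in hand, take $\Theta$ from Theorem \ref{thm2} for $K=\mathbf 0$. This gives $U_-\Theta=\mathbf 0$ and the LMI, and $QX_+\Theta=QAX_-\Theta=\mathbf 0$, which is condition 2.
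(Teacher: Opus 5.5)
Your sufficiency direction is correct. Under condition 2 you rightly get that every $A$ with $(A,B)\in\Sigma_{i/s}$ is stable with $QA=\mathbf{0}$, and your direct estimate $J(x_0,u)\ge x_0^TQx_0=J(x_0,\mathbf{0})$ settles optimality of $K=\mathbf{0}$ cleanly. (The paper only quotes this result from \cite{Waarde2020} and gives no proof of its own, so I am judging your argument on its merits.) The necessity direction, however, has a genuine gap. Its core step, deriving $K=\mathbf{0}$ and $QA=\mathbf{0}$, is explicitly deferred. The route you sketch, differentiating $P(\alpha)$ along $A+BK+\alpha(A_0+B_0K)$, misses the fact that makes the argument work. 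In fact $A_0+B_0K=\mathbf{0}$ is forced for every $(A_0,B_0)\in\Sigma^0_{i/s}$, whereas you treat it as a free choice. So your target ``$P(A_0+B_0K)=\mathbf{0}$'' is vacuous; the conclusion actually needed is $P(A+BK)=\mathbf{0}$.

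The missing argument runs as follows.
\begin{itemize}
\item Since $A+BK+\alpha(A_0+B_0K)$ is stable for all $\alpha$, the matrix $E=A_0+B_0K$ is nilpotent. Since $\Sigma^0_{i/s}$ is closed under left multiplication, $E^TE$ is nilpotent too, so $E=\mathbf{0}$. This is Lemma 15 of \cite{Waarde2020}, proved by the same scaling argument as Lemma \ref{lem1}.
\item Hence $F=A+BK$ is the same for all $(A,B)\in\Sigma_{i/s}$. So is the Riccati solution $P$, being the unique solution of the Stein equation $P=F^TPF+K^TRK+Q$ with $F$ stable. There is nothing to differentiate.
\item The optimality condition $(R+B^TPB)K=-B^TPA$ reads $RK+B^TPF=\mathbf{0}$. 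Applying it to both $(A,B)$ and $(A+A_0,B+B_0)$ gives $B_0^TPF=\mathbf{0}$.
\item Since identification fails, pick $\xi=\begin{bmatrix}\xi_1\\ \xi_2\end{bmatrix}\neq\mathbf{0}$ with $\xi^T\begin{bmatrix}X_-\\ U_-\end{bmatrix}=\mathbf{0}$. Then $\xi_2\neq\mathbf{0}$ because $X_-$ has full row rank. Also $(z\xi_1^T,z\xi_2^T)\in\Sigma^0_{i/s}$ for every $z\in\mathbb{R}^n$, so $\xi_2z^TPF=\mathbf{0}$ for all $z$, i.e.\ $PF=\mathbf{0}$.
\item Then $RK=\mathbf{0}$ gives $K=\mathbf{0}$, so $F=A$ and $PA=\mathbf{0}$. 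The Stein equation collapses to $P=Q$, whence $QA=\mathbf{0}$.
\end{itemize}
Only now does your final step go through: taking $\Theta$ from Theorem \ref{thm2} with $K=\mathbf{0}$ gives $U_-\Theta=\mathbf{0}$ and $QX_+\Theta=QAX_-\Theta=\mathbf{0}$. Note that what you need there is $QA=\mathbf{0}$, not ``$QX_+=QAX_-$''.
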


\begin{thm}(Theorem 29 of \cite{Waarde2020})
Let $Q=Q^T$ be positive semidefinite and $R=R^T$ be positive definite. Suppose that the data $\mathcal{D}$ are informative for LQR. Consider the linear operator $P \mapsto \mathcal{L}(P)$ defined by
\begin{equation}
\mathcal{L}(P)=X_{-}^TPX_{-}-X_{+}^TPX_{+}-X_{-}^TQX_{-}-U_{-}^TRU_{-}.
\end{equation}
Let $P^{+}$ be the unique largest real symmetric solution to \eqref{DARE}. The following statements hold.
\begin{itemize}
\item The matrix $P^{+}$ is equal to the unique solution to the optimization problem
\begin{equation} \nonumber
\begin{aligned}
&\textup{max}~ \textup{tr}(P)\\
&\textup{subject to } P \succeq 0 \textup{ and } \mathcal{L}(P) \preceq  0.
\end{aligned}
\end{equation}

\item There exists a right inverse $X_{-}^{\dagger}$ of $X_{-}$ such that
\begin{equation} \label{LP}
\begin{aligned}
 \mathcal{L}(P^{+})X_{-}^{\dagger}=\bf{0}.
\end{aligned}
\end{equation}
Moreover, if $X_{-}^{\dagger}$ satisfies \eqref{LP}, then the optimal feedback gain is given by $K=U_{-}X_{-}^{\dagger}$.
\end{itemize}

\end{thm}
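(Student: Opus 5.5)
We prove the two bullets of the theorem in turn. Throughout, we use the standing assumption that $\mathcal{D}$ is informative for LQR. By Theorem \ref{thm4}, this means either (i) the data are informative for system identification, or (ii) there is a $\Theta$ with the stated properties. In both cases every $(A,B)\in\Sigma_{i/s}$ has the same optimal gain $K$ and the same largest solution $P^{+}$ of \eqref{DARE}. In case (ii) this holds because the subspace $\textup{im}\,X_{-}\Theta=\mathbb{R}^n$ forces $P^{+}=0$ and $K=0$. We also note that $P^{+}\succeq 0$ by Theorem \ref{thm3}.

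\emph{First bullet.} The idea is to translate the data inequality into a model inequality. For any $(A,B)\in\Sigma_{i/s}$ we have $X_{+}=AX_{-}+BU_{-}$, so
\[
\mathcal{L}(P)=\begin{bmatrix}X_{-}\\ U_{-}\end{bmatrix}^T M(P)\begin{bmatrix}X_{-}\\ U_{-}\end{bmatrix},\qquad
M(P)=\begin{bmatrix}P-A^TPA-Q & -A^TPB\\ -B^TPA & -R-B^TPB\end{bmatrix}.
\]
\emph{Feasibility of $P^{+}$.} The Schur complement of $M(P^{+})$ with respect to the $(2,2)$ block is exactly the DARE residual, which is zero, and the $(2,2)$ block is negative definite. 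Hence $M(P^{+})\preceq 0$, and therefore $\mathcal{L}(P^{+})\preceq 0$.

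\emph{Optimality and uniqueness.} Take any feasible $P\succeq 0$ with $\mathcal{L}(P)\preceq 0$. For every $x_0$ we compare $x_0^TPx_0$ with the cost obtained under the optimal closed loop $x(t+1)=(A+BK)x(t)$, $u(t)=Kx(t)$. Suppose each pair $(x(t),u(t))$ lies in the column span of $\textup{col}(X_{-},U_{-})$, i.e.\ $\textup{col}(x(t),u(t))=\textup{col}(X_{-},U_{-})v$ for some $v$. Then $v^T\mathcal{L}(P)v\le 0$ gives
\[
x(t)^TPx(t)-x(t+1)^TPx(t+1)\le x(t)^TQx(t)+u(t)^TRu(t).
\]
Telescoping, and using stability of $A+BK$, yields $x_0^TPx_0\le J(x_0,Kx)=x_0^TP^{+}x_0$. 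Hence $P\preceq P^{+}$, so $\textup{tr}(P)\le\textup{tr}(P^{+})$ with equality only if $P=P^{+}$.

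The step that must be checked is that the closed-loop pairs indeed lie in that span. In case (i) the span is all of $\mathbb{R}^{n+m}$, so there is nothing to check. In case (ii) we have $K=0$ and $P^{+}=0$, so we need $P\preceq 0$ directly. We obtain it by taking $v=\Theta$: since $U_{-}\Theta=0$, this gives $\Theta^TX_{-}^TPX_{-}\Theta-\Theta^TX_{+}^TPX_{+}\Theta-\Theta^TX_{-}^TQX_{-}\Theta\preceq 0$, where $X_{+}\Theta(X_{-}\Theta)^{-1}$ is stable. Changing variables to $S=X_{-}\Theta$, this becomes a Lyapunov-type inequality $P-F^TPF\preceq \tilde Q$ with $F$ stable, and iterating gives $P\preceq \sum_t (F^T)^t\tilde Q F^t$. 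In this case $\tilde Q$ vanishes, because $QX_{+}\Theta=0$ and $X_{-}\Theta=X_{+}\Theta\cdot(\cdot)^{-1}$ relations yield $Q=0$ on $\mathbb{R}^n$. Hence $P\preceq 0$, and combined with $P\succeq 0$ this gives $P=0=P^{+}$.

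\emph{Second bullet.} Since $M(P^{+})\preceq 0$ with a nondefinite Schur structure, its kernel is exactly $\{\textup{col}(x,Kx)\}$. Indeed, $M(P^{+})$ factors as $-\begin{bmatrix}-K & I\end{bmatrix}^T(R+B^TP^{+}B)\begin{bmatrix}-K & I\end{bmatrix}$, using that $K=-(R+B^TP^{+}B)^{-1}B^TP^{+}A$. Therefore $\mathcal{L}(P^{+})V=0$ if and only if $(U_{-}-KX_{-})V=0$. It then suffices to find a right inverse $X_{-}^{\dagger}$ with $U_{-}X_{-}^{\dagger}=K$; with this choice $\mathcal{L}(P^{+})X_{-}^{\dagger}=0$ and $K=U_{-}X_{-}^{\dagger}$.
\begin{itemize}
\item In case (i), we take $X_{-}^{\dagger}=V_1+V_2K$.
\item In case (ii), we take $X_{-}^{\dagger}=\Theta(X_{-}\Theta)^{-1}$, which satisfies $U_{-}X_{-}^{\dagger}=0=K$.
\end{itemize}
Conversely, if some right inverse $X_{-}^{\dagger}$ satisfies \eqref{LP}, the kernel characterization gives $U_{-}X_{-}^{\dagger}=KX_{-}X_{-}^{\dagger}=K$, which is the claimed formula for the optimal gain.

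The main obstacle is case (ii) of the first bullet, i.e.\ showing that feasibility forces $P\preceq P^{+}$ when the data do not identify the system. I would first try to settle it with the $\Theta$-based Lyapunov argument above.
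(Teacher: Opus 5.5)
\textbf{The gap is in your treatment of case 2 of Theorem \ref{thm4}, the step you yourself flagged.}

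Neither $P^{+}=\mathbf{0}$ nor $Q=\mathbf{0}$ follows in that case. Put $S=X_{-}\Theta\succ 0$ and $F=X_{+}\Theta S^{-1}$.
\begin{itemize}
\item The condition $U_{-}\Theta=\mathbf{0}$ forces $A=F$ for every $(A,B)\in\Sigma_{i/s}$.
\item The LMI makes $F$ stable.
\item The condition $QX_{+}\Theta=\mathbf{0}$ only says $QF=\mathbf{0}$.
\end{itemize}
Then $P=Q$ solves \eqref{DARE} for every such $(A,B)$, because every term containing $F^TQ$ vanishes. The corresponding gain is $K=\mathbf{0}$, and $A+BK=F$ is stable. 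Hence $P^{+}=Q$, not $\mathbf{0}$.

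Here is a concrete instance. Take $n=m=1$, $x(0)=1$, $u(0)=0$, $x(1)=0$ and $Q=R=1$. Then $\Theta=1$ meets condition 2, $\Sigma_{i/s}=\{(0,B)\}$, and $P^{+}=1$. The feasible set of the program is $0\le P\le 1$, so the inequality $P\preceq 0$ you set out to prove is false.

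In your Lyapunov step, $\tilde Q$ equals $Q$, not zero. Carried out correctly, the iteration gives $P\preceq\sum_{t\ge 0}(F^T)^tQF^t=Q=P^{+}$, which is what you actually need.

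Your opening claim that $P^{+}$ is common to all of $\Sigma_{i/s}$ rests on the same false statement in case 2. The claim itself is true. It also follows automatically once the first bullet is proved for each $(A,B)$ separately, since the program involves only data.

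\textbf{A simpler repair: no separate case is needed.} In case 2 you have $K=\mathbf{0}$, so $\textup{col}(x,\mathbf{0})=\textup{col}(X_{-},U_{-})\,\Theta S^{-1}x$. The span hypothesis of your telescoping argument therefore holds there too.

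More generally, informativity for stabilization with gain $K$ already gives $\textup{im}\begin{bmatrix} I_n\\ K\end{bmatrix}\subseteq\textup{im}\begin{bmatrix} X_{-}\\ U_{-}\end{bmatrix}$ (Lemma 15 of \cite{Waarde2020}). As the remark after Theorem \ref{thm9} notes, this inclusion is what the proof in \cite{Waarde2020} relies on; the paper itself only quotes the result. Choose $V$ with $X_{-}V=I_n$ and $U_{-}V=K$. Then:
\begin{itemize}
\item The inequality $V^T\mathcal{L}(P)V\preceq\mathbf{0}$ is exactly $P\preceq(A+BK)^TP(A+BK)+Q+K^TRK$.
\item Since $A+BK$ is stable, this gives $P\preceq P^{+}$.
\item The same $V$ is the right inverse required in the second bullet.
\end{itemize}

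The remaining pieces of your proposal are correct. These are the feasibility of $P^{+}$ via the Schur complement, the telescoping bound in case 1, and the identity $\mathcal{L}(P^{+})=-(U_{-}-KX_{-})^T(R+B^TP^{+}B)(U_{-}-KX_{-})$ used for the second bullet.
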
}

\subsection{Problem Formulation}\label{sec2.2}
In many cases, the full state of the system is not available.
Thus, we need to further consider the problem of designing control laws based on the input and output measurements of  discrete-time systems of the form \eqref{lisys2}.

It has been shown in \cite{Lin2024} that the data-driven output feedback control problem for continuous-time systems can be converted to the data-driven state feedback control problem of a well-defined ancillary system with the input matrix known. In Section V, we will further show that the data-driven output feedback control problem for discrete-time systems of the form \eqref{lisys2} can also be converted to the data-driven state feedback control problem of  a well-defined ancillary system of the form \eqref{lisys2} with the input matrix  known. Thus, in this paper, we will first sharpen the results as summarized in Section \ref{sec2.1} for the case where the input matrix $B$ is known. Then we will address in Section V  the data-driven output feedback control problem for the system  \eqref{lisys2}.

Since $B_s$ is known, we introduce the following sets:
{\color{black}
\begin{equation}\label{sigmais}
	\begin{aligned}
	\hat{\Sigma}^{0}_{i/s}& =\{ A^0  | {\bf{0}} = A^0 X_{-} \} \\
		\hat{\Sigma}_{i/s} &=\{ A | X_{+}-B_sU_{-}=AX_{-} \} \\
		\hat{\Sigma}_{K} &=\{A |A+B_s K \text{ is stable}\} \\
			\hat{\Sigma}_{K}^{\text{nil}}&=\{A |A+B_s K \text{ is nilpotent}\}
	\end{aligned}
\end{equation}}

Similar to the definitions in \cite{Waarde2020}, we say that the data $(U_{-},X)$ are informative for system identification if $\hat{\Sigma}_{i/s}=\{A_s\}$, which implies $\hat{\Sigma}^{0}_{i/s}=\{\bf{0}\}$,
the data $(U_{-},X)$ are informative for stabilization by state feedback  if there exists {\color{black}a state feedback control gain $K$} such that $\hat{\Sigma}_{i/s} \subseteq \hat{\Sigma}_{K}$, and
the data $\mathcal{D}$ are informative for deadbeat control if there exists a feedback gain $K$ such that $\hat{\Sigma}_{i/s} \subseteq \hat{\Sigma}_{K}^{\text{nil}}$.

Thus, the first three problems to be addressed in this paper are as follows:

\begin{pro} \label{p1}
Assuming $B_s$ is known, find the necessary and sufficient conditions for $\mathcal{D}$ to be informative for stabilization by state feedback.
\end{pro}

\begin{pro} \label{p2}
	Assuming $B_s$ is known, and  the data $\mathcal{D}$ are informative for stabilization by state feedback, find a control law $\mathcal{K}$ such that $	\hat{\Sigma}_{i/s} \subseteq 	\hat{\Sigma}_{K} $.
\end{pro}

\begin{pro} \label{p2x}
	Assuming $B_s$ is known, and  the data $\mathcal{D}$ are informative for deadbeat control, find a control law $\mathcal{K}$ such that $	\hat{\Sigma}_{i/s} \subseteq 	\hat{\Sigma}^{nil}_{K} $.
\end{pro}

We will also consider the data informativity problem for LQR with the input matrix known. For this purpose, we need to consider the following semidefinite programming problem:

\begin{pro}\label{op1} Given  $Q=Q^T \succeq 0$ and $R=R^T \succ 0$,  find $P$ such that
	\begin{equation} \nonumber
		\begin{aligned}
			&\textup{max } \textup{tr}(P)\\
			&\textup{subject to } P \succeq 0 \\
			&\begin{bmatrix}
				\bar{A}_s^T P \bar{A}_s -X_{-}^T P X_{-} + X_{-}^TQ X_{-} & \bar{A}_s^T P B_s \\
				B_s ^T P \bar{A}_s & R+B_s^T P B_s
			\end{bmatrix}  \succeq 0
		\end{aligned}
	\end{equation}
	where $\bar{A}_s=A_sX_{-} =X_{+}-B_s U_{-}$ and $B_s$ is known.
\end{pro}

Additionally, in   Section V,  we will  consider various design problems based on the input and output data of system \eqref{lisys2} with the input matrix known.

\section{Data Informativity for Stabilization by State Feedback with Input Matrix Known}\label{data infor stabilization}

Let us first note that it is evident that the data $(U_{-},X)$ are informative for system identification when $B_s$ is known if and only if
\begin{equation}\label{c14}
	\begin{aligned}
		\textup{rank}(X_{-})=n.
	\end{aligned}
\end{equation}
{\color{black}
\begin{rem}
The rank condition \eqref{c14} is much more milder than the rank condition $\text{rank}\left( \begin{bmatrix}X_{-} \\ U_{-}\end{bmatrix}\right)= n+m$ for  the general case where $B_s$ is unknown since the rank condition \eqref{c14} imposes no requirement on the rank of the matrix $U_{-}$, indicating  that only the state trajectories need to be sufficiently rich for identification. The “gap” between these two conditions reflects the fact that prior knowledge of $B_s$ eliminates the need to excite and identify the input data, thereby relaxing the data requirements. This weakened condition, in turn, weakens the condition for the data $(U_{-},X)$ to be informative for stabilization by state feedback, as will be made clear in Lemma \ref{lem1}.
\end{rem}}

We then establish the following lemma.

\begin{lem}\label{lem1}
	Suppose that the data $(U_{-},X)$ are informative for stabilization by state feedback, {\color{black}and let $K$ be a given feedback gain such that $	\hat{\Sigma}_{i/s} \subseteq 	\hat{\Sigma}_{K}$. Then $	\hat{\Sigma}^{0}_{i/s}=\{\bf{0}\}$ and hence  $X_{-}$ has full row rank.}
\end{lem}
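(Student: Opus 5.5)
The plan is to prove that $\hat{\Sigma}^{0}_{i/s}=\{\bf{0}\}$ by contradiction, using the affine structure of $\hat{\Sigma}_{i/s}$. The full row rank of $X_{-}$ then follows at once. If $\textup{rank}(X_{-})<n$, there is a nonzero $\xi\in\mathbb{R}^n$ with $\xi^TX_{-}=\bf{0}$, and any $A^0=\eta\xi^T$ with $\eta\neq \bf{0}$ is a nonzero element of $\hat{\Sigma}^{0}_{i/s}$. Conversely, $\hat{\Sigma}^{0}_{i/s}=\{\bf{0}\}$ means the left kernel of $X_{-}$ is trivial, which is exactly $\textup{rank}(X_{-})=n$.

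First I will note that $A_s\in\hat{\Sigma}_{i/s}$ and that $A_s+A^0\in\hat{\Sigma}_{i/s}$ for every $A^0\in\hat{\Sigma}^{0}_{i/s}$. Since $\hat{\Sigma}^{0}_{i/s}$ is a linear subspace, this gives $A_s+\alpha A^0\in\hat{\Sigma}_{i/s}\subseteq\hat{\Sigma}_K$ for every real $\alpha$. Writing $M=A_s+B_sK$, the hypothesis says that $M+\alpha A^0$ is stable for all $\alpha\in\mathbb{R}$.

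Next, suppose some $A^0\neq\bf{0}$ lies in $\hat{\Sigma}^{0}_{i/s}$; I will derive a contradiction. Because every eigenvalue of $M+\alpha A^0$ lies in the open unit disc, the matrix is bounded spectrally uniformly in $\alpha$. In particular, every coefficient of the characteristic polynomial $\det(\lambda I-M-\alpha A^0)$ is bounded by the corresponding binomial coefficient. Each coefficient is a polynomial in $\alpha$, so boundedness over all of $\mathbb{R}$ forces each coefficient to be constant in $\alpha$. Hence the spectrum of $M+\alpha A^0$ does not depend on $\alpha$. Dividing by $\alpha$ and letting $\alpha\to\infty$, the eigenvalues of $\tfrac1\alpha M+A^0$ tend to $0$, so by continuity of eigenvalues $A^0$ is nilpotent. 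This alone is not yet a contradiction, since a nonzero nilpotent matrix is allowed.

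The main obstacle is getting a contradiction from the full family $\hat{\Sigma}^{0}_{i/s}$ rather than from a single direction. I will exploit the structure $A^0=\eta\xi^T$ with arbitrary $\eta$, where $\xi\neq\bf{0}$ is in the left kernel of $X_{-}$ (such a $\xi$ exists whenever $\hat{\Sigma}^{0}_{i/s}\neq\{\bf{0}\}$). Choosing $\eta=\xi$ makes $A^0=\xi\xi^T$, whose trace is $\|\xi\|^2>0$, so $A^0$ is not nilpotent. This contradicts the previous step. More directly, $\textup{tr}(M+\alpha\xi\xi^T)=\textup{tr}(M)+\alpha\|\xi\|^2$ is unbounded, which is impossible for matrices whose eigenvalues all lie in the unit disc, since the trace is bounded by $n$ in absolute value. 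This trace argument is the cleanest route, and I will use it as the core of the proof. It shows that $\hat{\Sigma}^{0}_{i/s}=\{\bf{0}\}$, and therefore that $X_{-}$ has full row rank.
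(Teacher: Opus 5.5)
Your proof is correct. It shares the paper's skeleton: both use that $A_s+\alpha A^0\in\hat{\Sigma}_{i/s}$ for every $\alpha\in\mathbb{R}$, so $A_s+B_sK+\alpha A^0$ must be stable along the whole line. Both also use that $\hat{\Sigma}^{0}_{i/s}$ is closed under left multiplication to produce a nonzero symmetric positive semidefinite element: $(A^0)^TA^0$ in the paper, $\xi\xi^T$ in yours.

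The finishing step differs. The paper first shows, by dividing by $\alpha$ and letting $\alpha\to\infty$, that every element of $\hat{\Sigma}^{0}_{i/s}$ is nilpotent, and then uses the fact that a symmetric nilpotent matrix is zero. You instead observe that $\textup{tr}(A_s+B_sK+\alpha\xi\xi^T)=\textup{tr}(A_s+B_sK)+\alpha\|\xi\|^2$ grows without bound, while the trace of a stable matrix has modulus less than $n$.

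Your trace argument is shorter and purely elementary. It avoids the limit and continuity-of-eigenvalues step, which the paper leaves somewhat informal. The paper's route gives the extra fact that every element of $\hat{\Sigma}^{0}_{i/s}$ is nilpotent, but this lemma does not need it. Because the trace argument stands on its own, your characteristic-polynomial/nilpotency paragraph is correct but redundant and can be dropped.
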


\begin{Prf}
The proof is similar to the proof of Lemma 15 of \cite{Waarde2020}.
	
First, we prove that $A^0$ is nilpotent for all $A^0 \in \hat{\Sigma}^{0}_{i/s}$. Let $A^0 \in \hat{\Sigma}^{0}_{i/s}$, $A \in \hat{\Sigma}_{i/s}$, and $F=A+B_s K$. Then we can show that $A+\alpha A^0 \in \hat{\Sigma}_{i/s}, \forall \alpha \in \mathbb{R}$.
	
By the assumption in Lemma \ref{lem1}, $A+B_s K$ is stable for all $A \in \hat{\Sigma}_{i/s}$. Thus, the matrix $F+\alpha A^0$ is stable, which implies the spectral radius of the matrix $F+\alpha A^0$ is smaller than $1$. Let $\alpha > 0$ and divide $F+\alpha A^0$ by $\alpha$, the spectral radius of the matrix $\frac{1}{\alpha} F+A^0$ is smaller than $\frac{1}{\alpha}$. Consider the case where $\alpha$ tends to infinity, we can conclude that $A^0$ is nilpotent.
	
Moreover, $(A^0)^TA^0 \in \hat{\Sigma}^{0}_{i/s}$ if  $A^0 \in \hat{\Sigma}^{0}_{i/s}$. Thus, $(A^0)^TA^0$ is nilpotent.  Using the fact that the only symmetric nilpotent matrix is the zero matrix, we can conclude that $A^0=\bf{0}$, $\forall A^0 \in \hat{\Sigma}^{0}_{i/s}$, which implies that $X_{-}$ has full row rank.
\end{Prf}
{\color{black}
\begin{rem}
It is noted that Lemma 15 of \cite{Waarde2020} handled the similar problem for the general case where  $B_s$ is unknown and obtain the necessary condition for $A_0 + B_0 K = 0$ for all $(A_0, B_0) \in \Sigma^0_{i/s}$, which is equivalent to the following  $$\textup{im}\begin{bmatrix}
	I_n\\
	K
\end{bmatrix} \subseteq \textup{im}\begin{bmatrix}
	X_{-}\\
	U_{-}
\end{bmatrix}.$$
In contrast, Lemma  \ref{lem1} here imposes no constraint on $K$. As a result,  Lemma  \ref{lem1} applies to some systems that cannot be handled by Lemma 15 of \cite{Waarde2020}. 
\end{rem}}

With the above Lemma, we are ready to give the sufficient and necessary condition for the data to be informative for the stabilization.

{\color{black}
\begin{lem}\label{lemStateInfo}  
The data $(U_{-},X)$ are informative for stabilization by state feedback if and only if the matrix $X_{-}$ has full row rank and there exists a right inverse $X_{-}^{\dagger}$ of $X_{-}$ and $T \in \mathbb{R}^{m \times n}$ such that $X_{+}X_{-}^{\dagger}+B_sT$ is stable.
\end{lem}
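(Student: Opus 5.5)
The plan is to prove the two directions separately. The key observation is that when $X_{-}$ has full row rank, the set $\hat{\Sigma}_{i/s}$ is a single point, $\{A_s\}$, and $A_s = (X_{+}-B_sU_{-})X_{-}^{\dagger}$ for every right inverse $X_{-}^{\dagger}$ of $X_{-}$.

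\emph{Necessity.} Suppose the data are informative for stabilization by state feedback, and let $K$ satisfy $\hat{\Sigma}_{i/s}\subseteq\hat{\Sigma}_{K}$.
\begin{itemize}
\item[1)] Lemma \ref{lem1} gives that $X_{-}$ has full row rank. Hence $\hat{\Sigma}_{i/s}=\{A_s\}$ with $A_s=(X_{+}-B_sU_{-})X_{-}^{\dagger}$ for any right inverse $X_{-}^{\dagger}$ (at least one exists).
\item[2)] Fix such an $X_{-}^{\dagger}$ and put $T:=K-U_{-}X_{-}^{\dagger}$.
\item[3)] Then
\[
X_{+}X_{-}^{\dagger}+B_sT=(X_{+}-B_sU_{-})X_{-}^{\dagger}+B_sK=A_s+B_sK,
\]
which is stable by assumption.
\end{itemize}

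\emph{Sufficiency.} Suppose $X_{-}$ has full row rank and there exist a right inverse $X_{-}^{\dagger}$ and $T$ with $X_{+}X_{-}^{\dagger}+B_sT$ stable.
\begin{itemize}
\item[1)] Define $K:=U_{-}X_{-}^{\dagger}+T$.
\item[2)] Take any $A\in\hat{\Sigma}_{i/s}$. Then $AX_{-}=X_{+}-B_sU_{-}$. Right-multiplying by $X_{-}^{\dagger}$ gives $A=(X_{+}-B_sU_{-})X_{-}^{\dagger}$.
\item[3)] Therefore
\[
A+B_sK=X_{+}X_{-}^{\dagger}-B_sU_{-}X_{-}^{\dagger}+B_sU_{-}X_{-}^{\dagger}+B_sT=X_{+}X_{-}^{\dagger}+B_sT,
\]
which is stable. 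So $\hat{\Sigma}_{i/s}\subseteq\hat{\Sigma}_{K}$, and the data are informative.
\end{itemize}
This direction does not even use uniqueness beyond $AX_{-}X_{-}^{\dagger}=A$.

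The only step with real content is the full-row-rank part of necessity, and that is exactly Lemma \ref{lem1}. Everything else reduces to the identity $AX_{-}X_{-}^{\dagger}=A$ together with the substitution $T=K-U_{-}X_{-}^{\dagger}$. So I do not expect any genuine obstacle. The one thing to check is that the chosen $X_{-}^{\dagger}$ is an actual right inverse, which exists because $X_{-}$ has full row rank.
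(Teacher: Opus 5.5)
Your proof is correct and follows the paper's argument essentially step for step. Necessity uses Lemma~\ref{lem1} for full row rank and then sets $T=K-U_{-}X_{-}^{\dagger}$; sufficiency sets $K=U_{-}X_{-}^{\dagger}+T$ and uses $AX_{-}X_{-}^{\dagger}=A$ to identify $A+B_sK$ with $X_{+}X_{-}^{\dagger}+B_sT$, where the only cosmetic difference is that you check an arbitrary $A\in\hat{\Sigma}_{i/s}$ rather than first noting $\hat{\Sigma}_{i/s}=\{A_s\}$.
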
}

\begin{Prf}
If part: {\color{black}If the matrix $X_{-}$ has full row rank, then $ \hat{\Sigma}_{i/s}=\{A_s\}$.}
Let $K=U_{-}X_{-}^{\dagger}+T$. Then
\begin{equation}\label{eqK}
\begin{aligned}
X_{+}X_{-}^{\dagger}+B_s T=&\begin{bmatrix}A_s &  B_s \end{bmatrix}\begin{bmatrix} X_{-} \\  U_{-}\end{bmatrix}X_{-}^{\dagger}+B_s T\\
=&A_s+B_s K
\end{aligned}
\end{equation}
which is stable.
{\color{black}Therefore, there exists a state feedback control gain $K=U_{-}X_{-}^{\dagger}+T$ such that $\hat{\Sigma}_{i/s} \subseteq \hat{\Sigma}_{K}$ implies that the data $(U_{-},X)$ are informative for stabilization by state feedback.}
	
Only if part:
By Lemma \ref{lem1}, we have $A^0=\bf{0}$ which implies $X_{-}$ has full row rank and $ \hat{\Sigma}_{i/s}=\{A_s\}$.
And there exists a $K$ such that $A+B_s K$ is stable for all $A \in  \hat{\Sigma}_{i/s}$, which implies $A_s+B_s K$ is stable.
Let $T=K-U_{-} X_{-}^{\dagger}$ where $X_{-}^{\dagger}$ is any matrix such that $X_{-} X_{-}^{\dagger}=I_n$. Then,
\begin{equation}
\begin{aligned}
&A_s+B_s K\\
=&A_s+B_s (T+U_{-}X_{-}^{\dagger})\\
=&A_s+B_s U_{-} X_{-}^{\dagger} +B_s T\\
=&X_{+}X_{-}^{\dagger}+B_s T
\end{aligned}
\end{equation}
Thus, $X_{+}X_{-}^{\dagger}+B_s T$ is stable.
\end{Prf}

\begin{rem}
{\color{black} It is interesting to compare the condition given in Lemma \ref{lemStateInfo} and the condition given in Theorem \ref{thm1}. Theorem \ref{thm1} requires that the matrix  $X_{+}X_{-}^{\dagger}$ be Schur while Lemma \ref{lemStateInfo}  only requires  
the pair $(X_{+}X_{-}^{\dagger}, B_s)$ be stabilizable. Thus, 	
 we have one more variable $T$  to make the closed-loop system stable.
The following example shows that $X_{+}X_{-}^{\dagger}$ is not stable for all  $X_{-}^{\dagger}$, but there exists a $T$ such that $X_{+}X_{-}^{\dagger}+B_sT$ is stable for some $X_{-}^{\dagger}$.}
\end{rem}

\begin{Example}\label{eg1}
Consider system \eqref{lisys} with
\begin{equation} \label{examplesys}
\begin{aligned}
			A_s=\begin{bmatrix}
				1 & 0\\
				0 & 0
			\end{bmatrix},
			B_s=\begin{bmatrix}
				1\\
				1
			\end{bmatrix}.
\end{aligned}
\end{equation}
We collect data from this system on a single time interval from $t = 0$ until $t = 3$ with $u(t)=x_1(t)$ where $x_1(t)$ is the first component of $x(t)$. The collected data are  as follows:
	\begin{equation}\label{collecteddata}
		\begin{aligned}
			X=\begin{bmatrix}
				1   &  2 &    4  &   8\\
				2   &  1  &  2   &  4
			\end{bmatrix},
			U_{-}=\begin{bmatrix}
				1 &   2  & 4
			\end{bmatrix}.
		\end{aligned}
	\end{equation}

Clearly, $X_{-}$ is of full rank, but $\begin{bmatrix}
	X_{-}\\
	U_{-}
\end{bmatrix}$ is not since $U_{-}$ is linearly dependent of $X_{-}$.
Thus, the data $(U_{-},X_{-})$ are not informative for system identification if $A_s$ and $B_s$ are unknown.
The general solution of $X_{-}^{\dagger}$  is
\begin{equation}
\begin{aligned}
X_{-}^{\dagger}=\begin{bmatrix}
-\frac{1}{3}  &  \frac{2}{3} \\
\frac{2}{3}-2a & -\frac{1}{3}-2b  \\
a & b
\end{bmatrix}
\end{aligned}
\end{equation}
where $a, b \in \mathbb{R}$. For all $ X_{-}^{\dagger}$, we have
\begin{equation}
\begin{aligned}
X_{+}X_{-}^{\dagger}=\begin{bmatrix}
2 & 0\\
1 & 0
\end{bmatrix}
\end{aligned}
\end{equation}
Thus $X_{+}X_{-}^{\dagger}$ is not stable for all $X_{-}^{\dagger}$. Therefore, the method given in Theorem \ref{thm1} fails to deal with this case.
	
Nevertheless, by Lemma \ref{lemStateInfo}, taking advantage of the fact that $B_s$ is known and letting
\begin{equation}
\begin{aligned}
T=\begin{bmatrix}
-1.5000 &  0
\end{bmatrix}
\end{aligned}
\end{equation}
shows that $X_{+}X_{-}^{\dagger}+B_sT = \begin{bmatrix}
0.5 & 0\\
0 & 0
\end{bmatrix}$ is stable.
\end{Example}

Next, we further show how to find the stabilizing feedback gain in the following result.

\begin{thm}\label{thm7}
The data $(U_{-},X)$ are informative for stabilization by state feedback if and only if there exists matrices $\Theta \in \mathbb{R}^{N \times n}$ and $T_p \in \mathbb{R}^{m \times n}$ such that
\begin{small}
\begin{equation}\label{LMI1}
\begin{aligned}
{\color{black}X_{-} \Theta= (X_{-} \Theta )^T} \textup{ and }
\begin{bmatrix}
X_{-} \Theta & X_{+}\Theta+B_s T_p\\
(X_{+}\Theta+B_s T_p)^T & X_{-} \Theta
\end{bmatrix}\succ 0.
\end{aligned}
\end{equation}
\end{small}{\color{black}Let $T=T_p (X_{-} \Theta)^{-1}$, then, $K=U_{-} \Theta (X_{-} \Theta)^{-1}+T$ satisfies $\hat{\Sigma}_{i/s} \subseteq \hat{\Sigma}_{K}$.}
\end{thm}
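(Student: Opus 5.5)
The plan is to reduce the statement to Lemma \ref{lemStateInfo} and to the Schur-complement form of the discrete Lyapunov inequality, in the same way Theorem \ref{thm2} follows from Theorem \ref{thm1}. Throughout we write $P:=X_{-}\Theta$, which by the first condition in \eqref{LMI1} is symmetric.

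\emph{Sufficiency.} Suppose $\Theta$ and $T_p$ satisfy \eqref{LMI1}.

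\textbf{Step 1: $X_{-}$ has full row rank.} The $(1,1)$ block of the LMI gives $P\succ 0$. Hence $P$ is invertible, so $X_{-}$ has full row rank and $\hat{\Sigma}_{i/s}=\{A_s\}$.

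\textbf{Step 2: build a right inverse.} Set $X_{-}^{\dagger}:=\Theta P^{-1}$ and $T:=T_pP^{-1}$. Then $X_{-}X_{-}^{\dagger}=I_n$.

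\textbf{Step 3: identify the closed loop.} Let $K=U_{-}\Theta P^{-1}+T$. Using $X_{+}=A_sX_{-}+B_sU_{-}$,
\[
X_{+}\Theta+B_sT_p=(A_sX_{-}+B_sU_{-})\Theta+B_sT_p=(A_s+B_sK)P .
\]

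\textbf{Step 4: Schur complement.} With $F=A_s+B_sK$, the LMI reads
\[
\begin{bmatrix} P & FP\\ PF^T & P\end{bmatrix}\succ 0 .
\]
Taking the Schur complement gives $P-FPF^T\succ 0$ with $P\succ 0$. This is a Lyapunov inequality for $F^T$, so $F^T$ is Schur, hence $F$ is Schur. Since $\hat{\Sigma}_{i/s}=\{A_s\}$, this gives $\hat{\Sigma}_{i/s}\subseteq\hat{\Sigma}_K$. It also proves the final claim of the theorem about $K$.

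\emph{Necessity.} Suppose the data are informative.

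\textbf{Step 5: use Lemma \ref{lemStateInfo}.} The lemma gives that $X_{-}$ has full row rank and that there is a $K$ (take $K=U_{-}X_{-}^{\dagger}+T$) with $F=A_s+B_sK$ stable.

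\textbf{Step 6: a Lyapunov matrix.} Because $F^T$ is stable, there is $P\succ 0$ with $P-FPF^T\succ 0$. Take, for instance, the solution of $P-FPF^T=I$.

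\textbf{Step 7: choose the unknowns.} Set $\Theta:=X_{-}^{\dagger}P$, so that $X_{-}\Theta=P$ is symmetric. Set $T_p:=(K-U_{-}X_{-}^{\dagger})P=TP$.

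\textbf{Step 8: verify the LMI.} Then
\[
X_{+}\Theta+B_sT_p=(A_s+B_sU_{-}X_{-}^{\dagger})P+B_sTP=FP .
\]
By the Schur complement in reverse, the block matrix in \eqref{LMI1} is positive definite.

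The main point needing care is Step 4: the Schur complement yields a Lyapunov inequality for $F^T$ rather than $F$, so one must note that $F$ and $F^T$ share a spectrum. The other subtle point is that the informativity definition quantifies over all of $\hat{\Sigma}_{i/s}$; this is handled by the full-rank consequence in Step 1 for sufficiency and by Lemma \ref{lemStateInfo} for necessity. Everything else is direct substitution.
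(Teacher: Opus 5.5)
Your proposal is correct and follows essentially the paper's own argument. With $P=X_{-}\Theta$, the Schur complement turns \eqref{LMI1} into the Lyapunov inequality $P-FPF^T\succ 0$ for $F=A_s+B_sK$; sufficiency uses $X_{-}^{\dagger}=\Theta P^{-1}$ and $T=T_pP^{-1}$, and necessity uses Lemma \ref{lemStateInfo} with $\Theta=X_{-}^{\dagger}P$ and $T_p=TP$. The only difference is cosmetic: you write the off-diagonal block directly as $(A_s+B_sK)P$ rather than going through $X_{+}X_{-}^{\dagger}+B_sT$ and \eqref{eqK}, which is the same identity multiplied by $P$.
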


\begin{Prf}
If part: Since \eqref{LMI1} is satisfied, $X_{-} \Theta \succ 0$ which implies $X_{-}$ has full row rank.
Applying Schur complement and letting $T=T_p (X_{-} \Theta)^{-1}$ into \eqref{LMI1} gives 
{\color{black}
\begin{scriptsize}
\begin{equation} \label{eq24}
\begin{aligned}
&X_{-} \Theta-(X_{+}\Theta+B_s T_p)(X_{-} \Theta)^{-1}(X_{+}\Theta+B_s T_p)^T\\
=&X_{-} \Theta-(X_{+}\Theta+B_s T_p)(X_{-} \Theta)^{-1}(X_{-} \Theta) (X_{-} \Theta)^{-1} (X_{+}\Theta+B_s T_p)^T\\
=&X_{-} \Theta-(X_{+}\Theta(X_{-} \Theta)^{-1}+B_s T)(X_{-} \Theta)(X_{+}\Theta(X_{-} \Theta)^{-1}+B_s T)^T\\
\succ &0
\end{aligned}
\end{equation}
\end{scriptsize}}
Since \eqref{eq24} is a DARE with positive definite solution $X_{-} \Theta \succ 0$,  {\color{black}$X_{+}\Theta(X_{-} \Theta)^{-1}+B_s T$ is stable.} With $X_{-}^{\dagger}=\Theta(X_{-} \Theta)^{-1}$, $X_{+}X_{-}^{\dagger}+B_s T$ is stable.
By Lemma \ref{lemStateInfo}, the data $(U_{-},X)$ are informative for stabilization by state feedback. {\color{black}Moreover, by \eqref{eqK}, there exists a state feedback gain $K=U_{-} X_{-}^{\dagger}+T$ such that $\hat{\Sigma}_{i/s} \subseteq \hat{\Sigma}_{K}$.}
	
Only if part:
By Lemma \ref{lemStateInfo}, if the data $(U_{-},X)$ are informative for stabilization by state feedback, then the matrix $X_{-}$ has full row rank and there exist a right inverse $X_{-}^{\dagger}$ of $X_{-}$ and $T \in \mathbb{R}^{m \times n}$ such that $X_{+}X_{-}^{\dagger}+B_sT$ is stable.
	
Since $X_{+}X_{-}^{\dagger}+B_sT$ is stable, $(X_{+}X_{-}^{\dagger}+B_sT)^T$ is also stable. Then, there exists a $P \succ 0$ such that
\begin{equation}
\begin{aligned}
(X_{+}X_{-}^{\dagger}+B_sT)P(X_{+}X_{-}^{\dagger}+B_sT)^T-P<0
\end{aligned}
\end{equation}
Let $X_{-}^{\dagger} P=\Theta$ and $T_p=TP$, we have $P=X_{-} \Theta$. Then, the above equation can be rewritten as
\begin{equation}
\begin{aligned}
&(X_{+}X_{-}^{\dagger}+B_sT)P(X_{+}X_{-}^{\dagger}+B_sT)^T-P\\
=&(X_{+}X_{-}^{\dagger}P+B_sTP)P^{-1} (X_{+}X_{-}^{\dagger}P+B_sTP)^T-P\\
=&(X_{+}\Theta+B_sT_p)(X_{-} \Theta)^{-1} (X_{+}\Theta+B_sT_p)^T-(X_{-} \Theta)<0
\end{aligned}
\end{equation}
which implies \eqref{LMI1}.
\end{Prf}

{\color{black}
\begin{rem} 
Table \ref{tab} shows the assumptions, required rank conditions, and the informative conditions given in \cite{Persis2019}, \cite{Waarde2020}, and this paper.
	\begin{table*}
	\caption{Comparison of Assumptions, Rank Conditions, and Informative Conditions}\label{tab}
	\center
	{\color{black}
	\begin{tabular}{|c|c|c|c|}\hline
		Reference & Assumptions & Rank Conditions  & Informative Conditions\\ 
		\hline
		\cite{Persis2019} & $A_s,B_s$ are unknown & $\text{rank}\left(\begin{bmatrix} X_{-} \\  U_{-}\end{bmatrix} \right)= n+m$ & $\begin{bmatrix}
			X_{-} \Theta & X_{+} \Theta\\
			\Theta^T X_{+}^T & X_{-} \Theta
		\end{bmatrix} \succ 0$ \\
		\hline
		\cite{Waarde2020} &$A_s, B_s$ are unknown  &   $\text{rank}\left(X_{-}\right)= n$ &  $\begin{bmatrix}
			X_{-} \Theta & X_{+} \Theta\\
			\Theta^T X_{+}^T & X_{-} \Theta
		\end{bmatrix} \succ 0$ \\
		\hline  
		This paper & $A_s$ is unknown, $B_s$ is known &   $\text{rank}\left(X_{-}\right)= n$ & $\begin{bmatrix}
			X_{-} \Theta & X_{+}\Theta+B_s T_p\\
			(X_{+}\Theta+B_s T_p)^T & X_{-} \Theta
		\end{bmatrix}\succ 0$ \\
		\hline  		
	\end{tabular}}
\end{table*}
Comparing Theorem \ref{thm7} with Theorem \ref{thm2}, \eqref{LMI1} has an advantage of having an additional matrix $T_p$ to fulfill the positive definite  condition. This advantage  is illustrated by the following example which shows that \eqref{LMIWaarde} cannot be satisfied for all $\Theta$,   but there exists a $T_p$ such that \eqref{LMI1} is satisfied for some $\Theta$.
\end{rem}}

\begin{Example}\label{ex2}
Consider system \eqref{examplesys} again. Let
\begin{equation}
\begin{aligned}
\Theta=\begin{bmatrix}
x_1 & x_2\\
x_3 & x_4\\
x_5 & x_6
\end{bmatrix}
\end{aligned}
\end{equation}
where $x_1, x_2, \cdots, x_6$ are some variables to be determined.

Then, \eqref{eq27} holds.
\begin{figure*}[ht]
{\color{black}
\begin{equation} \label{eq27}
\begin{aligned}
&\begin{bmatrix}
X_{-} \Theta & X_{+} \Theta\\
\Theta^T X_{+}^T & X_{-} \Theta
\end{bmatrix}\\
=&\begin{bmatrix}
 x_1 + 2x_3 + 4x_5 & x_2 + 2x_4 + 4x_6 & 2x_1 + 4x_3 + 8x_5 & 2x_2 + 4x_4 + 8x_6\\
 2x_1 + x_3 + 2x_5 & 2x_2 + x_4 + 2x_6 & x_1 + 2x_3 + 4x_5 & x_2 + 2x_4 + 4x_6\\
2x_1 + 4x_3 + 8x_5 & x_1 + 2x_3 + 4x_5 & x_1 + 2x_3 + 4x_5 & x_2 + 2x_4 + 4x_6\\
2x_2 + 4x_4 + 8x_6 & x_2 + 2x_4 + 4x_6 & 2x_1 + x_3 + 2x_5 &  2x_2 + x_4 + 2x_6
\end{bmatrix}
\end{aligned}
\end{equation}}
\end{figure*}
Since $X_{-} \Theta= (X_{-} \Theta )^T$, $x_2 + 2x_4 + 4x_6=2x_1 + x_3 + 2x_5$.
To verify whether $\begin{bmatrix}
	X_{-} \Theta & X_{+} \Theta\\
	\Theta^T X_{+}^T & X_{-} \Theta
\end{bmatrix}$ is positive definite or not, we apply Sylvester's Criterion which requires that all the leading principal minors of it are positive. Thus, we have
\begin{equation} \label{pminor}
\begin{aligned}
 Q_1=&x_1 + 2x_3 + 4x_5>0\\
 Q_2=&3x_2x_3 - 3x_1x_4 - 6x_1x_6 + 6x_2x_5>0\\
 Q_3=&3Q_1(2x_1x_3 - 2x_1x_2 - x_1x_4 - 3x_2x_3 + 4x_1x_5 \nonumber\\
 &- 2x_1x_6 - 6x_2x_5 + x_1^2)>0\\
 Q_4=&3Q_2 (2x_1x_3 - 2x_1x_2 - x_1x_4 - 3x_2x_3 + 4x_1x_5 \nonumber\\
 &- 2x_1x_6 - 6x_2x_5 + x_1^2)>0,
\end{aligned}
\end{equation}
which can be simplified as
\begin{subequations}
\begin{align}
&x_1 + 2x_3 + 4x_5>0\\
&3x_2x_3 - 3x_1x_4 - 6x_1x_6 + 6x_2x_5>0 \label{ieq2}\\
&2x_1x_3 - 2x_1x_2 - x_1x_4 - 3x_2x_3 + 4x_1x_5- 2x_1x_6 \nonumber \\
&- 6x_2x_5 + x_1^2>0 \label{ieq3}
\end{align}
\end{subequations}

Substituting $x_2=2x_1 + x_3 + 2x_5-2x_4 - 4x_6$ into \eqref{ieq2} and \eqref{ieq3} gives
\begin{subequations}
\begin{align}
&6x_1x_3 - 3x_1x_4 + 12x_1x_5 - 6x_1x_6 - 6x_3x_4 + 12x_3x_5 \nonumber \\
& -12x_3x_6 - 12x_4x_5 - 24x_5x_6 + 3x_3^2 + 12x_5^2 >0 \label{ieq4}\\
&3x_1x_4 - 6x_1x_3 - 12x_1x_5 + 6x_1x_6 + 6x_3x_4 - 12x_3x_5 \nonumber \\
& + 12x_3x_6 + 12x_4x_5 + 24x_5x_6 - 3x_1^2 - 3x_3^2 - 12x_5^2>0 \label{ieq5}
\end{align}
\end{subequations}

Adding \eqref{ieq4} to \eqref{ieq5} gives $-3x_1^2>0$, which forms a contradiction.
Thus, \eqref{LMIWaarde} cannot be satisfied for all $\Theta$.
However, taking advantage of the fact that $B_s$ is known, we can find
\begin{equation}
\begin{aligned}
\Theta=\begin{bmatrix}
-0.3522  &  1.0565\\
8.9914  & -213.9669\\
-4.1435 & 106.7193
\end{bmatrix},
T_p=\begin{bmatrix}
-1.5848  &  0
\end{bmatrix}
\end{aligned}
\end{equation}
such that \eqref{LMI1} is satisfied.
\end{Example}

 {\color{black} 
 \begin{rem}
 The LMIs in both the approach of this paper and the approaches in \cite{Persis2019}, \cite{Waarde2020}
 have the same size. That is, the dimension of the LMIs in all three papers is $2n$ where $n$ is the dimension of the system. Thus, the computational complexities in all three papers are essentially the same. Nevertheless, since we have one more matrix $T_p \in \mathbb{R}^{m \times n}$ at our disposal,  our LMI involves a total of $(N+m)n$ variables while the LMIs in \cite{Persis2019}, \cite{Waarde2020} only involve $Nn$ variables. 
 \end{rem}}

The necessary and sufficient conditions for  $\mathcal{D}$ to be informative for deadbeat control is as follows.

\begin{thm}\label{thmdeadbeat2}
	The data $(U_{-},X)$ are informative for deadbeat control if and only if the matrix $X_{-}$ has full row rank and there exists a right inverse $X_{-}^{\dagger}$ of $X_{-}$ and $T \in \mathbb{R}^{m \times n}$ such that $X_{+}X_{-}^{\dagger}+B_sT$ is nilpotent.
\end{thm}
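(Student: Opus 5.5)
The argument follows the same two-step pattern as Lemma \ref{lemStateInfo}: first a lemma forcing $X_{-}$ to have full row rank, then an identity linking $A_s+B_sK$ to $X_{+}X_{-}^{\dagger}+B_sT$.

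\emph{If part.} Suppose $X_{-}$ has full row rank. Then $\hat{\Sigma}_{i/s}=\{A_s\}$, because any $A\in\hat{\Sigma}_{i/s}$ satisfies $(A-A_s)X_{-}=\mathbf{0}$. Take the given right inverse $X_{-}^{\dagger}$ and the given $T$, and set $K=U_{-}X_{-}^{\dagger}+T$. The computation \eqref{eqK} shows that
\[
X_{+}X_{-}^{\dagger}+B_sT=A_s+B_sK .
\]
Hence $A_s+B_sK$ is nilpotent, so $\hat{\Sigma}_{i/s}\subseteq\hat{\Sigma}_K^{\text{nil}}$, and the data are informative for deadbeat control.

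\emph{Only if part.} The key step is a deadbeat analogue of Lemma \ref{lem1}. Let $K$ satisfy $\hat{\Sigma}_{i/s}\subseteq\hat{\Sigma}_K^{\text{nil}}$. A nilpotent matrix is in particular stable, since all its eigenvalues are zero. Therefore $\hat{\Sigma}_K^{\text{nil}}\subseteq\hat{\Sigma}_K$, which gives $\hat{\Sigma}_{i/s}\subseteq\hat{\Sigma}_K$. So the data are informative for stabilization by state feedback with this $K$.

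Lemma \ref{lem1} then applies directly. It gives $\hat{\Sigma}^0_{i/s}=\{\mathbf{0}\}$, so $X_{-}$ has full row rank and $\hat{\Sigma}_{i/s}=\{A_s\}$. This means that $A_s+B_sK$ is nilpotent.

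Now pick any right inverse $X_{-}^{\dagger}$ and define $T=K-U_{-}X_{-}^{\dagger}$. Using $X_{+}=A_sX_{-}+B_sU_{-}$, the same algebra as in the proof of Lemma \ref{lemStateInfo} gives
\[
X_{+}X_{-}^{\dagger}+B_sT=A_sX_{-}X_{-}^{\dagger}+B_sU_{-}X_{-}^{\dagger}+B_sT=A_s+B_sK .
\]
The right-hand side is nilpotent, which completes the argument.

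\emph{Main obstacle.} The only nontrivial step is the full row rank of $X_{-}$. Reducing it to Lemma \ref{lem1} through the inclusion "nilpotent $\Rightarrow$ stable" avoids redoing the scaling argument.

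If one preferred a self-contained route, the scaling argument can be repeated with nilpotency in place of stability. For $A^0\in\hat{\Sigma}^0_{i/s}$, the matrix $F+\alpha A^0$ is nilpotent for every $\alpha$. Dividing by $\alpha$ and letting $\alpha\to\infty$, continuity of eigenvalues shows that $A^0$ is nilpotent. Since $(A^0)^TA^0\in\hat{\Sigma}^0_{i/s}$ as well, it is also nilpotent, and being symmetric it must vanish; hence $A^0=\mathbf{0}$.
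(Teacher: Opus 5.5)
Your proposal is correct and matches the paper's proof essentially step for step. In the if part you use the identity $X_{+}X_{-}^{\dagger}+B_sT=A_s+B_sK$ with $K=U_{-}X_{-}^{\dagger}+T$; in the only-if part you note that deadbeat informativity implies stabilization informativity, invoke Lemma \ref{lem1} to get full row rank of $X_{-}$, and set $T=K-U_{-}X_{-}^{\dagger}$, exactly as the paper does (your optional self-contained scaling argument is also valid but unnecessary).
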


\begin{Prf}
If part:  Since $X_{-}$ has full row rank,  $ \hat{\Sigma}_{i/s}=\{A_s\}$.
Let $K=U_{-}X_{-}^{\dagger}+T$. Then
\begin{equation}
\begin{aligned}
X_{+}X_{-}^{\dagger}+B_s T=&\begin{bmatrix}A_s &  B_s \end{bmatrix}\begin{bmatrix} X_{-} \\  U_{-}\end{bmatrix}X_{-}^{\dagger}+B_s T\\
			=&A_s+B_s K
\end{aligned}
\end{equation}
which is nilpotent.
Therefore, the data $(U_{-},X)$ are informative for deadbeat control.
	
Only if part:
Since the data $(U_{-},X)$ are informative for deadbeat control,  the data $(U_{-},X)$ are informative for stabilization by state feedback.
By Lemma \ref{lem1}, we have $A^0=\bf{0}$, which implies $X_{-}$ has full row rank and $ \hat{\Sigma}_{i/s}=\{A_s\}$.
And there exists a $K$ such that $A+B_s K$ is nilpotent for all $A \in  \hat{\Sigma}_{i/s}$, which implies $A_s+B_s K$ is nilpotent.
Let $T=K-U_{-} X_{-}^{\dagger}$ where $X_{-}^{\dagger}$ is any matrix such that $X_{-} X_{-}^{\dagger}=I_n$. Then,
	\begin{equation}
		\begin{aligned}
			&A_s+B_s K\\
			=&A_s+B_s (T+U_{-}X_{-}^{\dagger})\\
			=&A_s+B_s U_{-} X_{-}^{\dagger} +B_s T\\
			=&X_{+}X_{-}^{\dagger}+B_s T
		\end{aligned}
	\end{equation}
Thus, $X_{+}X_{-}^{\dagger}+B_s T$ is nilpotent.
\end{Prf}

\begin{rem}
A full row rank  matrix $X_{-}$ may have multiple  right inverse $X_{-}^{\dagger}$ of $X_{-}$, {\color{black}but Theorem \ref{thmdeadbeat} does not tell how to find  a right inverse $X_{-}^{\dagger}$ of $X_{-}$ such that $X_{+}X_{-}^{\dagger}$ is nilpotent.} In contrast, with an extra freedom $T$, when $B_s$ is known,
it is straightforward to find a right inverse $X_{-}^{\dagger}$ of $X_{-}$ and a matrix $T \in \mathbb{R}^{m \times n}$ such that $X_{+}X_{-}^{\dagger}+B_sT$ is nilpotent.
	In fact,  suppose there exists a $X_{-}^{\dagger_1}$ and a $T_1$ such that $X_{+}X_{-}^{\dagger_1}+B_sT_1$ is nilpotent.  Then, for any right inverse $X_{-}^{\dagger_2}$  of $X_{-}$, let
$T_2=U_{-}X_{-}^{\dagger_1}+T_1-U_{-}X_{-}^{\dagger_2}$. Then
\begin{equation}
\begin{aligned}
&X_{+}X_{-}^{\dagger_2}+B_sT_2\\
=&\begin{bmatrix}A_s &  B_s \end{bmatrix}\begin{bmatrix} X_{-} \\  U_{-}\end{bmatrix}X_{-}^{\dagger_2}+B_s T_2\\
=&A_s+B_s U_{-} X_{-}^{\dagger_2} +B_s (U_{-}X_{-}^{\dagger_1}+T_1-U_{-}X_{-}^{\dagger_2})\\
=&A_s+B_s (U_{-}X_{-}^{\dagger_1}+T_1)\\
=&(A_sX_{-}+B_s U_{-})X_{-}^{\dagger_1}+B_s T_1\\
=&X_{+}X_{-}^{\dagger_1}+B_sT_1
\end{aligned}
\end{equation}
which means $X_{+}X_{-}^{\dagger_2}+B_sT_2$ is nilpotent.

This fact suggests an easy way to find a right inverse $X_{-}^{\dagger}$ of $X_{-}$ and a right matrix $T$ as follows. First, find an arbitrary right inverse $X_{-}^{\dagger}$ of $X_{-}$.
{\color{black}Second, verify whether the pair $(X_{+}X_{-}^{\dagger}, B_s)$ is stabilizable and all the eigenvalues of $X_{+}X_{-}^{\dagger}$ not at the origin are controllable. If yes, one can apply the pole placement technique to find a $T$ such that all the eigenvalues of  $X_{+}X_{-}^{\dagger}+B_sT$ are at the origin. Otherwise, the data $(U_{-},X)$ are not informative for deadbeat control.}
\end{rem}

{\color{black}
\begin{Example}
Consider system \eqref{examplesys} again. Since it has been shown in Example \ref{eg1} that for all $X_{-}^{\dagger}$, we have
\begin{equation}
\begin{aligned}
X_{+}X_{-}^{\dagger}=\begin{bmatrix}
2 & 0\\
1 & 0
\end{bmatrix}
\end{aligned}
\end{equation}
which is not nilpotent. Therefore, the method given in Theorem \ref{thmdeadbeat} fails to deal with this case. Nevertheless, by Theorem \ref{thmdeadbeat2}, taking advantage of the fact that $B_s$ is known and letting
\begin{equation}
\begin{aligned}
T=\begin{bmatrix}
-2 &  0
\end{bmatrix}
\end{aligned}
\end{equation}
shows that $X_{+}X_{-}^{\dagger}+B_sT = \begin{bmatrix}
	0 & 0\\
  -1 & 0
\end{bmatrix}$ is nilpotent.
\end{Example}}

\section{Data Informativity for LQR with the  Known Input Matrix}\label{data infor LQR}
In this section, we will consider the data informativity for LQR problem with the input matrix known.
Given $Q=Q^T \succeq 0$  and $R=R^T \succ 0$, let
\begin{equation}
	\Sigma_{K}^{Q,R}=\{A | K \text{ is the optimal gain for } (A, B_s, Q, R) \},
\end{equation}
that is,  $\Sigma_{K}^{Q,R}$ is the set of all system matrices $(A, B_s)$ for which $K$ is the optimal feedback gain corresponding to $Q$ and $R$.

The following result gives the sufficient and necessary conditions for $\mathcal{D}$ to be informative for  the LQR problem.

\begin{thm}\label{thm8}
Let $Q=Q^T \succeq 0$ be such that $(A_s, \sqrt{Q})$ is detectable and $R=R^T$ be positive definite. Then the data $(U_{-},X)$ are informative for LQR if and only if the matrix $X_{-}$ has full row rank and there exist a right inverse $X_{-}^{\dagger}$ of $X_{-}$ and $T \in \mathbb{R}^{m \times n}$ such that $X_{+}X_{-}^{\dagger}+B_sT$ is stable.
\end{thm}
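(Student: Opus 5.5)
The plan is to reduce the LQR informativity question to the stabilization informativity already characterized in Lemma \ref{lemStateInfo}, using Theorem \ref{thm3} to connect stabilizability with solvability of the LQR problem.

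\emph{Only if.} Suppose the data $(U_{-},X)$ are informative for LQR, so there is a $K$ with $\hat{\Sigma}_{i/s}\subseteq \Sigma_{K}^{Q,R}$.
\begin{enumerate}
\item By the definition of the optimal gain (Theorem \ref{thm3}), every $A\in\hat{\Sigma}_{i/s}$ has $K$ as its optimal gain, so $A+B_sK$ is stable for every such $A$.
\item Hence $\hat{\Sigma}_{i/s}\subseteq\hat{\Sigma}_{K}$, i.e.\ the data are informative for stabilization by state feedback.
\item Lemma \ref{lemStateInfo} then gives that $X_{-}$ has full row rank, together with a right inverse $X_{-}^{\dagger}$ and a $T$ such that $X_{+}X_{-}^{\dagger}+B_sT$ is stable.
\end{enumerate}
This direction is essentially bookkeeping.

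\emph{If.} Assume $X_{-}$ has full row rank and $X_{+}X_{-}^{\dagger}+B_sT$ is stable.
\begin{enumerate}
\item Full row rank gives $\hat{\Sigma}_{i/s}=\{A_s\}$, since any $A^0$ with $A^0X_{-}=\mathbf{0}$ must vanish.
\item By \eqref{eqK}, $X_{+}X_{-}^{\dagger}+B_sT=A_s+B_sK$ with $K=U_{-}X_{-}^{\dagger}+T$. So $(A_s,B_s)$ is stabilizable.
\item Detectability of $(A_s,\sqrt{Q})$ implies that every eigenvalue of $A_s$ on the unit circle is observable for $(A_s,\sqrt{Q})$. This is checked via the PBH test: detectability requires $\textup{rank}\begin{bmatrix}A_s-\lambda I\\ \sqrt{Q}\end{bmatrix}=n$ for all $|\lambda|\ge 1$, in particular for $|\lambda|=1$.
\item By statement 2 of Theorem \ref{thm3}, the DARE \eqref{DARE} for $(A_s,B_s,Q,R)$ has a positive semidefinite solution $P^*$. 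The gain $K^*$ in \eqref{Kstar} is then optimal and $A_s+B_sK^*$ is stable.
\item Since $\hat{\Sigma}_{i/s}=\{A_s\}$, we get $\hat{\Sigma}_{i/s}\subseteq\Sigma_{K^*}^{Q,R}$, so the data are informative for LQR.
\end{enumerate}

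The main obstacle is the meaning of ``the optimal gain'' in $\Sigma_K^{Q,R}$ in the if part. We must make sure the LQR problem is solvable for $A_s$ and that the gain is well defined. I would handle this by appealing to Theorem \ref{thm3} as stated, which asserts both that $K^*$ is optimal and that it is stabilizing. Uniqueness of the stabilizing solution under detectability (the standard fact that $P^*=P^{+}$) can be cited if needed, so that $K^*$ is unambiguous.
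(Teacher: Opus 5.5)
Your proposal is correct and follows the same route as the paper. In the only-if direction you go through informativity for stabilization and then Lemma \ref{lemStateInfo}; in the if direction you use full row rank to get $\hat{\Sigma}_{i/s}=\{A_s\}$, then \eqref{eqK} for stabilizability, then solvability of the LQR problem under detectability. You also spell out two steps the paper leaves implicit: that the optimal gain is stabilizing (so LQR informativity implies stabilization informativity), and the PBH argument showing that detectability of $(A_s,\sqrt{Q})$ gives the unit-circle observability hypothesis of Theorem \ref{thm3}.
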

\begin{Prf}
If part: By Lemma \ref{lemStateInfo}, if the matrix $X_{-}$ has full row rank and there exists a right inverse $X_{-}^{\dagger}$ of $X_{-}$ and $T \in \mathbb{R}^{m \times n}$ such that $X_{+}X_{-}^{\dagger}+B_s T$ is stable, then $\Sigma_{i/s}=A_s$ and the pair $(A_s, B_s)$ is stabilizable. With $(A_s, \sqrt{Q})$ detectable and $R \succ 0$, the LQR problem is solvable for $(A_s, B_s, Q, R)$. Therefore, the data $(U_{-},X)$ are informative for LQR.
	
Only if part: If the data $(U_{-},X)$ are informative for LQR, the data $(U_{-},X)$ are informative for stabilization by state feedback. By Lemma \ref{lemStateInfo}, the matrix $X_{-}$ has full row rank and there exists a right inverse $X_{-}^{\dagger}$ of $X_{-}$ and $T \in \mathbb{R}^{m \times n}$ such that $X_{+}X_{-}^{\dagger}+B_sT$ is stable.
\end{Prf}

\begin{rem}
Different from Theorem \ref{thm4}, we assume that $(A_s, \sqrt{Q})$ is detectable which is slightly stricter than the condition given in Theorem \ref{thm3}. Nevertheless, under the assumption that the pair $(A_s, C_s)$ is observable, if  we choose $Q = C_s^T Q_y C_s$ with $Q_y = Q^T_y \succ 0$,
then   the pair $(A_s, \sqrt{Q})$ is detectable.
\end{rem}

Since Theorem \ref{thm8} does  not tell how to find the optimal feedback gain, we further
present the following result.

\begin{thm}\label{thm9}
Let $Q=Q^T \succeq 0$ be such that $(A_s, \sqrt{Q})$ is detectable and $R=R^T$ be positive definite. Suppose the data $(U_{-},X)$ are informative for LQR. Then the unique positive semidefinite solution $P^*$ to \eqref{DARE} is equal to the unique solution to Problem \ref{op1}. Moreover,
let $P^*$ be the optimal solution to Problem \ref{op1}. Then the optimal feedback gain $K^*=U_{-}X_{-}^{\dagger}+T^*$, where $X_{-}^{\dagger}$ is any matrix such that $X_{-} X_{-}^{\dagger}=I_n$, $T^*=-(R+B_s^T P^* B_s)^{-1} (B_s^T P^* X_{+} + R U_{-} ) X_{-}^{\dagger}$.
\end{thm}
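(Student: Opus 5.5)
We first use informativity for LQR, via Theorem \ref{thm8} and Lemma \ref{lemStateInfo}, to get that $X_{-}$ has full row rank. Hence $\hat{\Sigma}_{i/s}=\{A_s\}$, and $A_s=(X_{+}-B_sU_{-})X_{-}^{\dagger}$ for any right inverse $X_{-}^{\dagger}$. The same results give that $(A_s,B_s)$ is stabilizable. Since $(A_s,\sqrt{Q})$ is detectable, the DARE \eqref{DARE} with $(A,B)=(A_s,B_s)$ then has a unique positive semidefinite solution $P^*$. It coincides with the maximal solution $P^+$ of Theorem \ref{thm3}, and the gain $K^*=-(R+B_s^TP^*B_s)^{-1}B_s^TP^*A_s$ is stabilizing and optimal.

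The key reformulation handles the matrix inequality of Problem \ref{op1}. Write $\bar{A}_s=A_sX_{-}$ and define
\[
M(P)=\begin{bmatrix} A_s^TPA_s-P+Q & A_s^TPB_s\\ B_s^TPA_s & R+B_s^TPB_s\end{bmatrix}.
\]
The LMI in Problem \ref{op1} equals $\mathrm{diag}(X_{-}^T,I_m)\,M(P)\,\mathrm{diag}(X_{-},I_m)$. Because $X_{-}$ has full row rank, $\mathrm{diag}(X_{-},I_m)$ is surjective, so the constraint is equivalent to $M(P)\succeq 0$. Since $R+B_s^TPB_s\succ 0$, a Schur complement turns this into
\[
\mathcal{R}(P):=A_s^TPA_s-A_s^TPB_s(R+B_s^TPB_s)^{-1}B_s^TPA_s+Q-P\succeq 0 .
\]
The feasible set of Problem \ref{op1} is therefore $\{P\succeq 0:\mathcal{R}(P)\succeq 0\}$. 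Clearly $P^*$ is feasible, since $\mathcal{R}(P^*)=0$.

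The main step, and the expected obstacle, is showing that every feasible $P$ satisfies $P\preceq P^*$. Once this holds, $\mathrm{tr}(P)\le \mathrm{tr}(P^*)$, with equality forcing $P=P^*$ because $P^*-P\succeq 0$ and has zero trace; this gives uniqueness. To prove the bound, fix a feasible $P$ and write $F=A_s+B_sK^*$, which is Schur. From $M(P)\succeq 0$, evaluated at the vector $\mathrm{col}(x,K^*x)$, we get
\[
F^TPF-P+Q+K^{*T}RK^*\succeq 0 .
\]
From the DARE we also have
\[
F^TP^*F-P^*+Q+K^{*T}RK^*=0 .
\]
Subtracting gives $F^T(P-P^*)F-(P-P^*)\succeq 0$. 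Iterating this inequality yields $P-P^*\preceq (F^T)^k(P-P^*)F^k\to 0$, so $P\preceq P^*$. This standard Lyapunov comparison is the step to write carefully. If it proved awkward, I would instead cite the maximality of $P^+$ among solutions of the Riccati inequality, as in Theorem 29 of \cite{Waarde2020}.

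Finally, we compute the gain. Substituting $A_s=(X_{+}-B_sU_{-})X_{-}^{\dagger}$ into $K^*$ gives
\[
K^*=-(R+B_s^TP^*B_s)^{-1}B_s^TP^*(X_{+}-B_sU_{-})X_{-}^{\dagger}.
\]
We now add and subtract $U_{-}X_{-}^{\dagger}=(R+B_s^TP^*B_s)^{-1}(R+B_s^TP^*B_s)U_{-}X_{-}^{\dagger}$:
\[
K^*=U_{-}X_{-}^{\dagger}-(R+B_s^TP^*B_s)^{-1}\bigl(B_s^TP^*X_{+}-B_s^TP^*B_sU_{-}+RU_{-}+B_s^TP^*B_sU_{-}\bigr)X_{-}^{\dagger}.
\]
The two $B_s^TP^*B_sU_{-}$ terms cancel, leaving $K^*=U_{-}X_{-}^{\dagger}+T^*$ with $T^*$ as stated. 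The result does not depend on which right inverse is chosen, because $A_s$ itself does not.
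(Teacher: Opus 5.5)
Your proof is correct. Its overall structure matches the paper's: a congruence with the surjective $\mathrm{diag}(X_{-},I_m)$, a Schur complement, and a final gain computation. The paper phrases the gain step as solving $\begin{bmatrix} B_s^TP^*A_s & R+B_s^TP^*B_s\end{bmatrix}\mathrm{col}(I_n,K^*)=\mathbf{0}$ with $\mathrm{col}(I_n,K^*)=\mathrm{col}(X_{-},U_{-})X_{-}^{\dagger}+\mathrm{col}(\mathbf{0},T^*)$, which is the same algebra as your add-and-subtract.

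The genuine difference is the key step, showing that $P^*$ is the unique maximizer. The paper does not prove this itself. It cites \cite{kucera1972} for uniqueness of the positive semidefinite DARE solution, and Theorem 3.1 of \cite{Ran1988} for the fact that this solution is the unique maximizer of $\mathrm{tr}(P)$ over symmetric $P$ with $R+B_s^TPB_s\succ 0$ and $\mathcal{R}(P)\succeq 0$. It then just adds the constraint $P\succeq 0$.

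Your Lyapunov comparison with $F=A_s+B_sK^*$ replaces that second citation with a self-contained argument. It also gives the stronger Loewner-order statement $P\preceq P^*$ for every feasible $P$, from which trace-uniqueness is immediate. The only outside fact you need is that the positive semidefinite solution is stabilizing, which follows from Theorem \ref{thm3} under detectability.

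As a side benefit, your comparison uses only $M(P)\succeq 0$, so it shows that the constraint $P\succeq 0$ in Problem \ref{op1} is not needed for the bound. The paper's route keeps that constraint only to make $R+B_s^TPB_s\succ 0$ automatic.
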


\begin{Prf}
	Since the data $(U_{-},X)$ are informative for LQR,  $\hat{\Sigma}_{i/s}=A_s$ and the pair $(A_s,B_s)$ is stabilizable. Moreover, with $(A_s, \sqrt{Q})$ being detectable, the following DARE admits a unique positive semidefinite solution $P_s^*$ by Theorem 8 of \cite{kucera1972}:
	\begin{equation}\label{DAREs}
		P=A_s^TPA_s-A_s^TPB_s(R+B_s^TPB_s)^{-1}B_s^TPA_s+Q
	\end{equation}

	By Theorem 3.1 of \cite{Ran1988}, the unique positive semidefinite solution $P_s^*$  of  \eqref{DAREs} is the unique solution of the following optimization problem:

	\begin{equation} \label{op2}
		\begin{aligned}
			\textup{max } & \textup{tr}(P)\\
			\textup{subject to } &P=P^T \\
			&R+B_s^TP B_s \succ 0 \\
			& A_s^T P A_s-P +Q\\
			&-A_s^T P B_s(R+B_s^T P B_s)^{-1}B_s^T P A_s \succeq 0
		\end{aligned}
	\end{equation}

	Since $P_s^*$ is positive semidefinite, we can impose a constraint $P \succeq 0$ on $P$. Thus, with $R\succ 0$, $R+B_s^TP B_s\succ 0$ is satisfied automatically. Then, $P_s^*$ of  \eqref{DAREs} is the unique solution of the following optimization problem:
	
	\begin{equation}\label{op3}
		\begin{aligned}
			\textup{max } & \textup{tr}(P)\\
			\textup{subject to } &P \succeq 0 \\
			& A_s^T P A_s-P +Q\\
			&-A_s^T P B_s(R+B_s^T P B_s)^{-1}B_s^T P A_s \succeq 0
		\end{aligned}
	\end{equation}
	
	By  Schur complement, the constraint $A_s^T P A_s-P+Q-A_s^T P B_s(R+B_s^T P B_s)^{-1}B_s^TP A_s \succeq 0$ is equivalent to
	\begin{equation}\label{schur1}
		\begin{aligned}
			\begin{bmatrix}
				A_s^T P A_s-P+Q & A_s^T P B_s \\
				B_s^TP A_s & R+B_s^T P B_s
			\end{bmatrix} \succeq 0
		\end{aligned}
	\end{equation}
	
	Since the data $(U_{-},X)$ are informative for LQR,  $X_{-} $ has full row rank, so does $\begin{bmatrix}
		X_{-} & \bf{0} \\
		\bf{0} & I_m
	\end{bmatrix}$.
	
	Then, \eqref{schur1} is equivalent to
	\begin{small}
		\begin{equation}\label{schur2}
			\begin{aligned}
				&\begin{bmatrix}
					X_{-}^T & \bf{0} \\
					\bf{0} & I_m
				\end{bmatrix}
				\begin{bmatrix}
					A_s^T P A_s-P+Q & A_s^T P B_s \\
					B_s^TP A_s & R+B_s^T P B_s
				\end{bmatrix} \begin{bmatrix}
					X_{-} & \bf{0} \\
					\bf{0} & I_m
				\end{bmatrix}\\
				=& \begin{bmatrix}
					X_{-}^TA_s^T P A_sX_{-} -X_{-}^T P X_{-} + X_{-}^TQ X_{-} & X_{-}^T A_s^T P B_s\\
					B_s^T P A_s X_{-} & R+B_s^T P B_s
				\end{bmatrix}\\
				=& \begin{bmatrix}
					\bar{A}_s^T P \bar{A}_s -X_{-}^T P X_{-} + X_{-}^TQ X_{-} & \bar{A}_s^T P B_s \\
					B_s ^T P \bar{A}_s & R+B_s^T P B_s
				\end{bmatrix}  \succeq 0
			\end{aligned}
		\end{equation}
	\end{small}
	where $\bar{A}_s=A_sX_{-} =X_{+}-B_s U_{-}$ is known.
	
	Having shown that the optimization problem \eqref{op3} is equivalent to the optimization problem \ref{op1},  we will show how to find the optimal feedback gain $K^*$.
	
	Note that $K^*=-(R+B_s^T P^*B_s)^{-1}B_s^T P^*A_s$ is the unique solution of the following equation:
	\begin{equation}\label{LPK}
		\begin{aligned}
			\begin{bmatrix}
				B_s^T P^* A_s & R+B_s^T P^* B_s
			\end{bmatrix}\begin{bmatrix}I_n \\ K^*  \end{bmatrix}=\bf{0}
		\end{aligned}
	\end{equation}
	since $R+B_s^TP^* B_s \succ 0$.
	
	Let $K^*=U_{-} X_{-}^{\dagger}+T^*$, where $X_{-}^{\dagger}$ is any matrix such that $X_{-}X_{-}^{\dagger}=I_n$. For any $X_{-}^{\dagger}$,  $T^*$ is the unique solution of the following equation with fixed $X_{-}^{\dagger}$.
	\begin{equation}\label{LPT1}
		\begin{aligned}
			\begin{bmatrix}
				B_s^T P^* A_s & R+B_s^T P^* B_s
			\end{bmatrix}
			(\begin{bmatrix}X_{-} \\ U_{-}  \end{bmatrix}X_{-}^{\dagger}+\begin{bmatrix}\bf{0} \\ T^*  \end{bmatrix})=\bf{0}
		\end{aligned}
	\end{equation}
	
	The above equation implies
	\begin{equation}\label{LPT2}
		\begin{aligned}
			T^*=-(R+B_s^T P^* B_s)^{-1} (B_s^T P^* X_{+} + R U_{-} ) X_{-}^{\dagger}
		\end{aligned}
	\end{equation}
\end{Prf}

\begin{rem}
Different from \cite{Waarde2020}, we do not impose any restriction on the feedback gain $K$ while \cite{Waarde2020} required that $\textup{im}\begin{bmatrix}
I_n\\
K
\end{bmatrix} \subseteq \textup{im}\begin{bmatrix}
X_{-}\\
U_{-}
\end{bmatrix}$. Thus, the proof of Theorem \ref{thm9} is totally different from the proof of Theorem 29 in \cite{Waarde2020}.	
\end{rem}

{\color{black}
\begin{Example}
Recall that when $A_s$ and $B_s$ are unknown, the collected data given in \eqref{collecteddata} are not informative for system identification.
Moreover, as shown in  Example \ref{ex2},  \eqref{LMIWaarde} cannot be satisfied for all $\Theta$. Thus,  by Theorem 28 of \cite{Waarde2020}, for any $Q \succeq 0$ and $R \succ 0$, the data $(U_{-}, X)$ are not informative for LQR. 

Nevertheless, let $Q=I_2$ and $R=1$. Then, taking advantage of the fact that $B_s$ is known,  there exists a $X_{-}^{\dagger}$ and a $T$ such that $X_{+}X_{-}^{\dagger}+B_sT$ is stable. Thus, the data $(U_{-}, X)$ are informative for LQR when $B_s$ is known by Theorem \ref{thm8}. Solving Problem \ref{op1} gives $P=\begin{bmatrix}
	2.0000  &  -0.0000\\
	-0.0000  &  1.0000
\end{bmatrix}$.
Let $X_{-}^{\dagger}=\begin{bmatrix}
	-0.3333  &   0.6667\\
	0.1333 &  -0.0667\\
	0.2667 &  -0.1333
\end{bmatrix}$. Then, $T=-(R+B_s^T P B_s)^{-1} (B_s^T P X_{+} + R U_{-} ) X_{-}^{\dagger}=[-1.5000, 0.0000]$, $
K=U_{-}X_{-}^{\dagger}+T=[-0.5000, 0.0000]$.
Solving \eqref{DARE} gives $P^*=\begin{bmatrix}
	2 & 0\\
	0 & 1
\end{bmatrix}$, $K^*=\begin{bmatrix}
	-0.5000 & 0.0000
\end{bmatrix}$.
Thus,  the method proposed in Theorem \ref{thm9} works well.
\end{Example}}

\section{Data Informativity Using Input and Output Data}\label{data infor io}

In this section, we further focus on the data informativity problem using input and output data only. Suppose we can collect input and output data on $k$ time intervals $\{0,1,\cdots, T_i \}$ for $i=1,2,\cdots,k$.
Similar to  \cite{Lin2024}, we first define an ancillary system using the input and output data. Then, we will show that the solutions to the stabilization problem and LQR problem of the ancillary system will lead to the output-based solutions to the same problems of the original system.

\subsection{State Parameterization Technique \cite{Rizvi2022}}

\begin{assmp}\label{ass1}
	The pair $(A,C)$ is observable.
\end{assmp}

Under Assumption \ref{ass1}, there exists an observer gain $L$ such that the eigenvalues of $A-LC$ can be arbitrarily placed inside the unit circle. Then the following Luenberger observer:
\begin{align}\label{observer}
	\hat{x}(t+1)= &A\hat{x}(t)+Bu(t)-L(C\hat{x}(t)-y(t))\notag \\
	=&(A-LC)\hat{x}(t)+Bu(t)+Ly(t)
\end{align}
will drive the state $\hat{x}(t)$ converge to $x(t)$.

Let $\Lambda(z)=\textup{det}(zI-A+LC) = z^n + \alpha_{n-1} z^{n-1} + \cdots+ \alpha_1 z + \alpha_0$. Then $(zI-A+LC)^{-1}=\frac{D_{n-1}z^{n-1}+D_{n-2}z^{n-2}+\cdots+D_1z+D_0}{\Lambda(z) }$ where $D_i\in \mathbb{R}^{n\times n}, i=0,1,\cdots, n-1$. Let $\zeta^i(t) \in \mathbb{R}^n$ be governed  by the following system:
\begin{align} \label{observer2}
	\zeta^i(t+1)=\mathcal{A} \zeta^i(t)+b w_i(t), ~~i = 1, \cdots, m+p
\end{align}
where $w_i(t) = u_i(t) $, $i = 1, \cdots, m$,  $w_i(t) = y_{i-m}(t) $, $i = m+1, \cdots, m+p$, and
\begin{align*}
	\mathcal{A}=\begin{bmatrix}
		0&1&0& \cdots& 0\\
		0& 0& 1& \cdots & 0\\
		\vdots & \vdots&\vdots& \vdots& \vdots\\
		0 & 0& 0& \cdots& 1\\
		-\alpha_0 & -\alpha_1& -\alpha_2& \cdots& -\alpha_{n-1}
	\end{bmatrix},~ b=\begin{bmatrix}
		0\\0\\ \vdots \\0\\ 1
	\end{bmatrix}
\end{align*}

Let $\zeta(t)= \col (\zeta^1(t), \cdots,  \zeta^{p+m}(t)) \in \mathbb{R}^{n_\zeta}$ and  $M = [M_1,\cdots ,M_{p+m}]$ where $n_\zeta=n(m+p)$, $M_i =  [D_0 f_i ,\cdots ,D_{n-1} f_i]$ with $f_i$ the $i^{th}$ column of $B$ for $i =1, \cdots, m$ and $(i - m)^{th}$ column of $L$ for $i = m+1, \cdots, m+p$.

By \eqref{observer2} and the definition of $\zeta(t)$, $\zeta(t)$ is governed by the following system:
\begin{align}\label{dyz}
	\zeta(t+1)= (I_{m+p}\otimes \mathcal{A})
	\zeta(t)+\begin{bmatrix}
		I_m\otimes b\\ {\bf0}
	\end{bmatrix}u(t)+\begin{bmatrix}
		{\bf0}\\I_p\otimes b
	\end{bmatrix}y(t)
\end{align}
By Lemma 5 of \cite{Lin2024}, the following equalities hold
\begin{subequations} \label{rela0}
	\begin{align}
	M (I_{m+p}\otimes \mathcal{A}) =&(A-LC)M  \label{rela1} \\
	M\begin{bmatrix}
		I_m\otimes b\\ {\bf0}
	\end{bmatrix}=&B \label{rela2} \\
	M\begin{bmatrix}
		{\bf0}\\I_p\otimes b
	\end{bmatrix}=&L \label{rela3}
\end{align}
\end{subequations}

Let $e_x(t)\triangleq M\zeta(t)-x(t)$. Then,  using \eqref{rela0} gives
\begin{equation}\label{ex}
	\begin{aligned}
		&e_x(t+1)=M\zeta(t+1)-x(t+1)\\
		=&M\left((I_{m+p}\otimes \mathcal{A})
		\zeta(t)+\begin{bmatrix}
			I_m\otimes b\\ {\bf0}
		\end{bmatrix}u(t)+\begin{bmatrix}
			{\bf0}\\I_p\otimes b
		\end{bmatrix}y(t) \right)\\
		&-Ax(t)-Bu(t)\\
		=&(A-LC)M\zeta(t)+Bu(t)+Ly(t)-Ax(t)-Bu(t)\\
		=&(A-LC)e_x(t)
	\end{aligned}
\end{equation}

Since $A-LC$ is stable, $e_x(t)$ will decay to $\bf{0}$ exponentially. Thus, $M\zeta(t)$ tends to $x(t)$ exponentially. For this reason, we call \eqref{dyz} a parameterized observer for \eqref{lisys2}.

\begin{rem}
	Since $\Lambda(z)$ is user-defined, $\mathcal{A}$ and $b$ are known matrices. {\color{black} Since the eigenvalues of the system matrix $\mathcal{A}$ can be arbitrarily specified by adjusting the parameters $\alpha_0, \cdots, \alpha_{n-1}$ and $\mathcal{A}$ is stable, it is robust with respect to the variations of the parameters $\alpha_0, \cdots, \alpha_{n-1}$ and bounded measurement noise. }
	\end{rem}

\subsection{A Dynamic Output Feedback Controller}

Substituting $y(t)=Cx(t)=CM\zeta(t)-Ce_x(t)$ into   \eqref{dyz} gives
\begin{align}\label{dyz2bf}
\zeta(t+1)=A_\zeta \zeta(t)+B_\zeta u(t) +d_\zeta(t)
\end{align}
where
\begin{align*}
	A_\zeta&=
	(I_{m+p}\otimes \mathcal{A})+\begin{bmatrix}
		{\bf0}\\I_p\otimes b
	\end{bmatrix}CM\\
	B_\zeta&=\begin{bmatrix}
		I_m\otimes b\\ {\bf0}
	\end{bmatrix},d_\zeta(t)=-\begin{bmatrix}
		{\bf0}\\I_p\otimes b
	\end{bmatrix}Ce_x(t)
\end{align*}

It is noted that $A_\zeta$ is an unknown matrix since $CM$ is unknown, but  $B_\zeta$ is known.

{\color{black}Before we show that the solution to the state-based stabilization problem of \eqref{dyz2bf} will lead to the solution to the output-based stabilization problem of \eqref{lisys2}, we need the following Lemma. }

\begin{lem} \label{lem2}
	Under Assumption \ref{ass1}, the pair $(A_\zeta, B_\zeta)$ is stabilizable if and only if the pair $(A,B)$ is stabilizable.
\end{lem}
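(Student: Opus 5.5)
The plan is to use the PBH (Hautus) test on both pairs and to transfer left eigenvectors between them through the matrix $M$. Throughout, $L$ is chosen so that $A-LC$ is stable, so $\mathcal{A}$ (whose characteristic polynomial is $\Lambda(z)$) and $I_{m+p}\otimes\mathcal{A}$ are stable. The first step combines \eqref{rela1} and \eqref{rela3} into the intertwining relations
\begin{equation*}
MA_\zeta=(A-LC)M+LCM=AM,\qquad MB_\zeta=B .
\end{equation*}
The second step is to establish that $M$ has full row rank $n$. The columns of $M_i=[D_0f_i,\cdots,D_{n-1}f_i]$ span the reachable subspace of $(A-LC,[B~L])$. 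This is because $\sum_k D_k z^k=\operatorname{adj}(zI-A+LC)$, so the $D_k$ are polynomials in $A-LC$ whose leading coefficient is $D_{n-1}=I$, which gives a triangular change of basis to the Krylov matrices. Since $(A-LC,[B~L])$ and $(A,[B~L])$ have the same reachable subspace, I would invoke Lemma 5 of \cite{Lin2024} for the full-rank property under Assumption \ref{ass1}. I expect this to be the main obstacle, since it is not automatic for an arbitrary $L$. If it cannot simply be cited, I would argue it via the observability of $(A,C)$ together with the freedom in choosing $L$.

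For the ``if'' part, suppose $(A,B)$ is stabilizable and assume, for contradiction, that $w^TA_\zeta=\lambda w^T$ and $w^TB_\zeta=0$ with $|\lambda|\ge1$ and $w\neq0$. Split $w^T=[w_u^T~w_y^T]$ and set $c^T=w_y^T(I_p\otimes b)$. Then
\begin{equation*}
w^T\bigl(\lambda I-I_{m+p}\otimes\mathcal{A}\bigr)=c^TCM .
\end{equation*}
Because $\lambda$ is not an eigenvalue of the stable matrix $\mathcal{A}$, and because \eqref{rela1} gives $M(\lambda I-I\otimes\mathcal{A})^{-1}=(\lambda I-A+LC)^{-1}M$, we obtain $w^T=v^TM$ with $v^T=c^TC(\lambda I-A+LC)^{-1}$. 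Since $w\neq0$, also $v\neq0$. The intertwining relations then give
\begin{equation*}
v^TAM=w^TA_\zeta=\lambda v^TM,\qquad v^TB=w^TB_\zeta=0 .
\end{equation*}
Full row rank of $M$ then yields $v^TA=\lambda v^T$ with $v^TB=0$ and $|\lambda|\ge1$, which contradicts the PBH test for $(A,B)$.

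For the ``only if'' part, suppose $(A_\zeta,B_\zeta)$ is stabilizable and take $v\neq0$ with $v^TA=\lambda v^T$, $v^TB=0$, $|\lambda|\ge1$. Set $w^T=v^TM$, which is nonzero because $M$ has full row rank. The intertwining relations give $w^TA_\zeta=v^TAM=\lambda w^T$ and $w^TB_\zeta=v^TB=0$, which contradicts the stabilizability of $(A_\zeta,B_\zeta)$. As a structural check, one can also note that $\ker M$ is $A_\zeta$-invariant and $A_\zeta$ restricted to it equals $I\otimes\mathcal{A}$, which is stable. Hence every unstable mode of $A_\zeta$ is inherited from $A$ through $M$, consistent with the argument above.
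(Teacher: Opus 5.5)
There is a genuine gap: both of your directions rest on $M$ having full row rank, and that is false in general. As you note, $\textup{im}\,M$ is the reachable subspace of $(A,[B~L])$. Nothing in Assumption~\ref{ass1} or in the stability of $A-LC$ makes $(A,[B~L])$ controllable. Concretely, take $A=\mathrm{diag}(0.5,2)$, $B=[0~1]^T$, $C=[1~1]$ (so $(A,C)$ is observable) and $L=[0~2]^T$. Then $A-LC$ has eigenvalues $0.5$ and $0$, and $\Lambda(z)=z^2-0.5z$. We get $D_1=I$ and $D_0=\begin{bmatrix}0&0\\-2&-0.5\end{bmatrix}$, so $M=\begin{bmatrix}0&0&0&0\\-0.5&1&-1&2\end{bmatrix}$ has rank one. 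Full rank can fail only when $\Lambda$ shares a root with $\det(zI-A)$: a left eigenvector $v$ of $A$ with $v^T[B~L]=0$ is also a left eigenvector of $A-LC$. Since $A$ is unknown, the designer cannot exclude this, and the deadbeat design even puts all roots of $\Lambda$ at $0$. So no citation can supply full rank unconditionally; in this paper Lemma~5 of \cite{Lin2024} is used only for \eqref{rela1}--\eqref{rela3}. Choosing $L$ suitably would prove the lemma only for special $L$, whereas it is invoked (e.g.\ in Theorem~\ref{thm10}) for whatever stabilizing $L$ underlies the chosen $\Lambda$.

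Both directions can be repaired without surjectivity of $M$. For the ``only if'' part you only need $v^TM\neq 0$ for your particular $v$. If $v^TL=0$, then $v^T(A-LC)=\lambda v^T$ with $|\lambda|\ge 1$, contradicting stability of $A-LC$. Hence $v^TL\neq 0$, and since $D_{n-1}=I$ the columns of $L$ are columns of $M$, so $v^TM\neq 0$. This is exactly the paper's argument, and your intertwining step then finishes.

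For the ``if'' part, your resolvent computation correctly gives $v\neq 0$, $v^TB=0$ and $(v^TA-\lambda v^T)M=0$. That only says $v$ induces a left eigenvector of $A$ restricted to $\mathcal R=\textup{im}\,M$. To conclude, write $A$, $B$, $L$ in a basis adapted to the $A$-invariant subspace $\mathcal R\supseteq\textup{im}\,[B~L]$. The quotient block $A_{22}$ of $A$ equals that of $A-LC$, hence is stable. So $\lambda I-A_{22}$ is invertible, and the eigenvector extends to a genuine left eigenvector of $A$ at $\lambda$ that annihilates $B$.

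Alternatively, follow the paper and show directly that $K_\zeta=KM$ works when $A+BK$ is stable; your own structural remark already gives this. The subspace $\ker M$ is invariant under $A_\zeta+B_\zeta KM$, with restriction $I_{m+p}\otimes\mathcal A$. The relation $M(A_\zeta+B_\zeta KM)=(A+BK)M$ makes the induced quotient map similar to $(A+BK)$ restricted to $\textup{im}\,M$, so both pieces are stable.
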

{\color{black}
\begin{Prf}
See the Appendix.
\end{Prf}}

\begin{rem}\label{rem9}
	The proof of the sufficient part of this Lemma is parallel to  Lemma 6 of \cite{Lin2024}, which considered  the continuous-time case. But  Lemma 6 of \cite{Lin2024} did not consider the necessary part.
\end{rem}

{\color{black}Now, we are ready to show that the solution to the state-based stabilization problem of \eqref{dyz2bf} will lead to the solution to the  output-based stabilization problem of \eqref{lisys2}. }

\begin{thm} \label{thm10}
Under Assumption 1, there exists a dynamic output feedback control law of the following form:
\begin{equation}\label{outputc}
\begin{aligned}
u(t)=&K_\zeta \zeta(t)\\
\zeta(t+1)=&(I_{m+p}\otimes \mathcal{A})
\zeta(t)+\begin{bmatrix}
	I_m\otimes b\\ {\bf0}
\end{bmatrix}u(t)+\begin{bmatrix}
	{\bf0}\\I_p\otimes b
\end{bmatrix}y(t)
\end{aligned}
\end{equation}
such that the closed-loop system is stable if and only if the pair $(A, B)$ is stabilizable.
\end{thm}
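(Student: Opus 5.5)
We prove the two directions separately, working throughout with the closed-loop system in the coordinates $(x(t),e_x(t))$ or, equivalently, $(\zeta(t),e_x(t))$.

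\emph{Sufficiency.} Suppose $(A,B)$ is stabilizable. By Lemma \ref{lem2}, the pair $(A_\zeta,B_\zeta)$ is stabilizable, so there is a $K_\zeta$ with $A_\zeta+B_\zeta K_\zeta$ stable. Apply $u(t)=K_\zeta\zeta(t)$ together with the dynamics \eqref{outputc}. Since $y(t)=Cx(t)=CM\zeta(t)-Ce_x(t)$, the computation leading to \eqref{dyz2bf} and \eqref{ex} gives the closed loop
\begin{align*}
\zeta(t+1)&=(A_\zeta+B_\zeta K_\zeta)\zeta(t)-\begin{bmatrix}{\bf 0}\\ I_p\otimes b\end{bmatrix}Ce_x(t),\\
e_x(t+1)&=(A-LC)e_x(t).
\end{align*}
This system is block upper triangular, and both diagonal blocks are stable. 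Hence $(\zeta,e_x)$ converges to zero from every initial condition, and so does $x=M\zeta-e_x$.

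We should also state what "closed-loop system" means: it is the interconnection of the plant with the controller, with state $(x,\zeta)$. The change of coordinates $(x,\zeta)\mapsto(\zeta,e_x)$, with $e_x=M\zeta-x$, is invertible. Its inverse is $x=M\zeta-e_x$. So the closed-loop matrix in $(x,\zeta)$ is similar to the triangular one above and is therefore Schur.

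\emph{Necessity.} Suppose some $K_\zeta$ makes the closed loop of \eqref{lisys2} with \eqref{outputc} stable. In coordinates $(x,\zeta)$ the closed-loop matrix is
\[
\begin{bmatrix} A & BK_\zeta\\ \begin{bmatrix}{\bf0}\\I_p\otimes b\end{bmatrix}C & (I_{m+p}\otimes\mathcal A)+B_\zeta K_\zeta\end{bmatrix}.
\]
This has the form of a plant $(A,B,C)$ in feedback with a dynamic compensator that uses only $y$, namely a compensator with state $\zeta$, input $y$ and output $u=K_\zeta\zeta$.

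We then invoke the standard fact that stabilization by any dynamic output feedback forces $(A,B)$ to be stabilizable. Suppose instead that $(A,B)$ is not stabilizable. Then there is $v\neq0$ with $v^*A=\lambda v^*$, $|\lambda|\ge1$, and $v^*B=0$. Set $w=\col(v,{\bf 0})$. Then $w^*$ times the closed-loop matrix equals $(\lambda v^*,\, v^*BK_\zeta)=(\lambda v^*,{\bf0})=\lambda w^*$. So $\lambda$ is a closed-loop eigenvalue, which contradicts stability.

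Alternatively, necessity follows from the triangular structure: the closed-loop matrix is similar to the one with diagonal blocks $A_\zeta+B_\zeta K_\zeta$ and $A-LC$. Stability then gives that $A_\zeta+B_\zeta K_\zeta$ is stable, so $(A_\zeta,B_\zeta)$ is stabilizable, and Lemma \ref{lem2} yields that $(A,B)$ is stabilizable. I would present this second route, since it uses Lemma \ref{lem2} directly.

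The main point requiring care is the invertible change of coordinates and the block-triangular form. In particular, we must verify that substituting $y=CM\zeta-Ce_x$ is exact for every initial $\zeta(0)$ and $x(0)$, and not only asymptotically. This holds because $e_x$ is defined for all $t$ by $e_x(t)=M\zeta(t)-x(t)$, and \eqref{ex} holds identically by \eqref{rela0}.
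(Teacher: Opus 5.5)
Your proposal is correct and follows essentially the paper's proof. An invertible change of coordinates (the paper uses $H$ with $x-M\zeta=-e_x$, you use $(\zeta,e_x)$) puts the closed loop in block-triangular form with diagonal blocks $A-LC$ and $A_\zeta+B_\zeta K_\zeta$, and Lemma \ref{lem2} is then applied in both directions. The differences are minor: for sufficiency you take any stabilizing $K_\zeta$ supplied by the statement of Lemma \ref{lem2}, rather than the paper's specific $K_\zeta=KM$, whose stability the paper attributes to Lemma \ref{lem2} although it really comes from that lemma's proof; and your alternative left-eigenvector argument for necessity is also valid and avoids Lemma \ref{lem2} and Assumption \ref{ass1} altogether.
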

{\color{black}
\begin{Prf}
See the Appendix.
\end{Prf}}

\begin{rem}
As the only design parameter of the output feedback control law \eqref{outputc} is the matrix $K_\zeta$, Theorem \ref{thm10} implies that the solution to the state feedback stabilization problem of the following system
\begin{align}\label{dyz2bf2}
\zeta(t+1)=A_\zeta \zeta(t)+B_\zeta u(t),
\end{align}
which is obtained from \eqref{dyz2bf} by ignoring the term $d_\zeta(t)$, will lead to the  solution to the output feedback stabilization problem of system \eqref{lisys2}.
We call \eqref{dyz2bf2} an ancillary system. {\color{black}Later on, we will establish the relationship of the deadbeat control problem and LQR problem between system \eqref{lisys2} and the ancillary system \eqref{dyz2bf2}.}
 \end{rem}

{\color{black}
\begin{rem}
The robustness of the Luenburger observer to the measurement noise or data outliers has been well studied  in \cite{Waarde2020S} and \cite{Bisoffi2022}. 
Define the effect of the measurement noise and the data outliers as $\omega(t)$.
In the presence of measurement noise or data outliers, the observer dynamics \eqref{dyz2bf2} can be rewritten as follows:
\begin{equation} \label{re50}
	\begin{aligned}
		\zeta(t+1)=A_\zeta \zeta(t)+B_\zeta u(t) +d(t)
	\end{aligned}
\end{equation}
where the disturbance term $d(t)$ includes the effects of current and past measurement noise and data outliers. If the effect of
the measurement noise and the data outliers $\omega(t)$ is bounded, then $d(t)$ has bounded energy. Then, the robust stabilization of the system \eqref{re50} can be achieved using the same methods developed in \cite{Waarde2020S} and \cite{Bisoffi2022}.
\end{rem}}

{\color{black}
We now will establish the relationship between the deadbeat control problem of \eqref{lisys2} by state feedback control and by output feedback control of the form \eqref{outputc}.

\begin{thm}  \label{thmdeadbeat3}
Under Assumption 1, system \eqref{lisys2} admits an output feedback deadbeat control law of the form \eqref{outputc}  if and only if system \eqref{lisys2} admits a state feedback deadbeat control law $u(t)=Kx(t)$.
\end{thm}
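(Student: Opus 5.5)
The plan is to take the closed-loop state to be $\col(x(t),\zeta(t))$ and read ``output feedback deadbeat control law of the form \eqref{outputc}'' as requiring the closed-loop system matrix to be nilpotent. Equivalently, $x(t)$ and $\zeta(t)$ must both vanish after finitely many steps for every initial condition $(x(0),\zeta(0))$. Since $\Lambda(z)$ is user-defined, I will use the freedom in choosing it: under Assumption \ref{ass1} pick $L$ so that $A-LC$ is nilpotent, i.e. $\Lambda(z)=z^n$ and $\alpha_0=\cdots=\alpha_{n-1}=0$. Then $\mathcal{A}$ is the shift matrix, $I_{m+p}\otimes\mathcal{A}$ is nilpotent, and by \eqref{ex} $e_x(t)=(A-LC)^te_x(0)=\bf{0}$ for $t\ge n$.

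\emph{Sufficiency.} Let $K$ be such that $A+BK$ is nilpotent, and set $K_\zeta=KM$. Then $u(t)=KM\zeta(t)=K(x(t)+e_x(t))$, so
\[
x(t+1)=(A+BK)x(t)+BKe_x(t),\qquad e_x(t+1)=(A-LC)e_x(t).
\]
This is block triangular with nilpotent diagonal blocks. Hence $x(t)$ and $e_x(t)$ are zero after at most $2n$ steps, and therefore $u(t)=\bf{0}$ and $y(t)=Cx(t)=\bf{0}$ from then on. It remains to show $\zeta$ also vanishes; this is needed because $M$ need not have full column rank, so $\zeta$ is not determined by $(x,e_x)$. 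Once $u$ and $y$ are zero, \eqref{outputc} reduces to $\zeta(t+1)=(I_{m+p}\otimes\mathcal{A})\zeta(t)$, and this matrix is nilpotent, so $\zeta(t)=\bf{0}$ after another $n$ steps. The closed loop is linear and every trajectory reaches the origin within a uniform finite time, so its system matrix is nilpotent.

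\emph{Necessity.} Suppose some $K_\zeta$ makes the closed loop of \eqref{lisys2} and \eqref{outputc} nilpotent. Then for every $x(0)$ there is an input sequence, namely the one the controller generates, that drives $x$ to $\bf{0}$ in finite time. I will show this forces every uncontrollable eigenvalue of $A$ to be $0$. Suppose not: take $\lambda\neq0$ and $w\neq\bf{0}$ with $w^TA=\lambda w^T$ and $w^TB=\bf{0}$. Then $w^Tx(t)=\lambda^tw^Tx(0)$ for any input, which cannot vanish when $w^Tx(0)\neq0$, a contradiction. By the PBH test and pole placement on the controllable part (the uncontrollable part then has only zero eigenvalues), there is a $K$ with $A+BK$ nilpotent, i.e. a state feedback deadbeat law. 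If instead $\mathcal{A}$ is regarded as given, the same argument applies unchanged, since necessity never used the choice of $L$.

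The main obstacle I expect is the bookkeeping in sufficiency. Nilpotency of the $(x,e_x)$ dynamics does not by itself give nilpotency of the true closed-loop matrix in $(x,\zeta)$, because the map $(x,\zeta)\mapsto(x,e_x)$ need not be invertible. The extra step using nilpotency of $I_{m+p}\otimes\mathcal{A}$, which requires choosing $\Lambda(z)=z^n$, is what closes this gap. Alternatively one could invoke Lemma \ref{lem2}-style arguments on $A_\zeta+B_\zeta K_\zeta$ directly, but the trajectory argument above seems cleaner.
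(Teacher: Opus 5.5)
Your proof is correct. The sufficiency half is the paper's argument: choose $\Lambda(z)=z^n$, so that $A-LC$ and $I_{m+p}\otimes\mathcal{A}$ are nilpotent, set $K_\zeta=KM$, and let $e_x$, then $x$, then $\zeta$ vanish by $t=n$, $2n$, $3n$. For necessity you take a genuinely different and more elementary route.

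\emph{The paper's route.} It argues through the ancillary system. The similarity transformation \eqref{nonsingularH} shows that $A-LC$ and $A_\zeta+B_\zeta K_\zeta$ must both be nilpotent. A nonzero uncontrollable eigenvalue $\lambda_1$ of $(A,B)$ with left vector $s_1$ then forces $s_1^TL\neq\mathbf{0}$, since otherwise $\lambda_1$ would be an eigenvalue of $A-LC$. Hence $s_1^TM\neq\mathbf{0}$. The identity $s_1^TM\begin{bmatrix}\lambda_1 I-A_\zeta & B_\zeta\end{bmatrix}=s_1^T\begin{bmatrix}\lambda_1 I-A & B\end{bmatrix}\mathrm{diag}(M,I)=\mathbf{0}$ then contradicts the PBH test for $(A_\zeta,B_\zeta)$ at $\lambda_1\neq 0$.

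\emph{Your route.} You only use that the closed loop supplies, for each $x(0)$, an input sequence steering $x$ to the origin in finite time. The invariant $w^Tx(t)=\lambda^tw^Tx(0)$, valid under any input, then rules out every nonzero uncontrollable mode.

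\emph{What each buys.} Your version needs neither $M$, nor $H$, nor the relations \eqref{rela0}, nor Assumption \ref{ass1}. It shows that no controller of any kind can achieve deadbeat unless $(A,B)$ admits a state feedback deadbeat gain. The paper's version extracts more structure: $\Lambda(z)=z^n$ is forced, and $A_\zeta+B_\zeta K_\zeta$ must itself be nilpotent. That is what ties the original problem to the ancillary-system deadbeat design of Theorem \ref{thmnilp}, and it parallels the proof of Lemma \ref{lem2}.

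One minor detail in your argument: if $\lambda$ is complex then $w$ is complex, but a real $x(0)$ with $w^Tx(0)\neq 0$ still exists, so the contradiction stands.
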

{\color{black}
\begin{Prf}
See the Appendix.
\end{Prf}}

\subsection{{\color{black}Informative Problem and Control Design Problem for \eqref{dyz2bf2}}}\label{ImAL}
 Note that the state and input data of the ancillary system \eqref{dyz2bf2} is available. To eliminate the effect of $e_x(t)$, we will collect the data of $\zeta(t)$ after $t\geq T_0$ for some integer $T_0$ on $k$ time intervals $\{T_0,T_0+1,\cdots, T_i \}$ for $i=1,2,\cdots,k$ with $T_i >T_0$. Let
\begin{subequations}
	\begin{align}
		\bar{U}^i_{-}=&\begin{bmatrix}
			u^i(T_0) & u^i(T_0+1) & \cdots &  u^i(T_i-1)
		\end{bmatrix}\\
		\bar{X}^i_{-}=&\begin{bmatrix}
			\zeta^i(T_0) & \zeta^i(T_0+1) & \cdots &  \zeta^i(T_i-1)
		\end{bmatrix}\\
		\bar{X}^i_{+}=&\begin{bmatrix}
			\zeta^i(T_0+1) & \zeta^i(T_0+2) & \cdots &  \zeta^i(T_i)
		\end{bmatrix}\\
		\bar{X}^i=&\begin{bmatrix}
			\zeta^i(T_0) & \zeta^i(T_0+1) & \cdots &  \zeta^i(T_i)
		\end{bmatrix} \\
		\bar{Y}^i_{-}=&\begin{bmatrix}
			y^i(T_0) & y^i(T_0+1) & \cdots &  y^i(T_i-1)
		\end{bmatrix}
	\end{align}
\end{subequations}
Then, putting the above data together gives
\begin{subequations}
	\begin{align}
		\bar{U}_{-}=&\begin{bmatrix}
			\bar{U}^1_{-} & \cdots &  \bar{U}^k_{-}
		\end{bmatrix},
		\bar{X}=\begin{bmatrix}
			\bar{X}^1 &  \cdots &  \bar{X}^k
		\end{bmatrix}\\
		\bar{X}_{-}=&\begin{bmatrix}
			\bar{X}^1_{-} & \cdots &  \bar{X}^k_{-}
		\end{bmatrix},
		\bar{X}_{+}=\begin{bmatrix}
			\bar{X}^1_{+} & \cdots &  \bar{X}^k_{+}
		\end{bmatrix} \\
		\bar{Y}_{-}=&\begin{bmatrix}
			\bar{Y}^1_{-} & \cdots &  \bar{Y}^k_{-}
		\end{bmatrix}
	\end{align}
\end{subequations}

Let $\bar{T}=\sum_{i=1}^{k} (T_i-T_0)$. $\bar{X}_{+}, \bar{X}_{-} \in \mathbb{R}^{n_\zeta \times \bar{T}}$, $\bar{U}_{-} \in \mathbb{R}^{m \times \bar{T}}$.
Then, $\bar{X}_{+}=A_\zeta \bar{X}_{-}+B_\zeta \bar{U}_{-}$. {\color{black}Let $\bar{\mathcal{D}}=(\bar{U}_{-}, \bar{X})$.}

Since $B_\zeta$ is known, corresponding to \eqref{sigmais},   we introduce the following sets for system \eqref{dyz2bf2}:
{\color{black}
\begin{equation}\label{sigmaisz}
	\begin{aligned}
			\bar{\Sigma}^0_{i/s} &=\{ A^0_\zeta  | {\bf{0}} = A^0_\zeta \bar{X}_{-} \} \\
		\bar{\Sigma}_{i/s} &=\{ A_\zeta | \bar{X}_{+}-B_\zeta \bar{U}_{-}= A_\zeta \bar{X}_{-} \} \\
	\bar{\Sigma}_{K_\zeta} & =\{ A_\zeta | A_\zeta+B_\zeta K_\zeta \text{ is stable}\}  \\
		\bar{\Sigma}_{K_\zeta} ^{\text{nil}}& =\{ A_\zeta | A_\zeta+B_\zeta K_\zeta \text{ is nilpotent}\}  		
		\end{aligned}
\end{equation}}

Similar to Section \ref{sec2.2}, we say that the  data $\bar{\mathcal{D}}$ are informative for system identification of \eqref{dyz2bf2} if $\bar{\Sigma}_{i/s} =\{A_{\zeta,s}\}$ with $A_{\zeta,s}$ the true system matrix of \eqref{dyz2bf2}, which implies $\bar{\Sigma}^{0}_{i/s}=\{\bf{0}\}$, {\color{black}
the data $\bar{\mathcal{D}}$  are informative for stabilization of \eqref{dyz2bf2}  by state feedback  if there exists a feedback gain {\color{black}${K}_\zeta$ such that $ \bar{\Sigma}_{i/s} \subseteq \bar{\Sigma}_{K_\zeta}$, and 	the data $\bar{\mathcal{D}}$ are informative for deadbeat control of \eqref{dyz2bf2} by state feedback if there exists a feedback gain ${K}_\zeta$ such that $ \bar{\Sigma}_{i/s} \subseteq \bar{\Sigma}_{K_\zeta}^{\text{nil}}$.}

\begin{lem}
The data $\bar{\mathcal{D}}$ are informative for system identification of \eqref{dyz2bf2} if and only if
		\begin{equation}
		\begin{aligned}
			\textup{rank}(\bar{X}_{-})=n_\zeta
		\end{aligned}
	\end{equation}
	where $n_\zeta=n(m+p)$.
\end{lem}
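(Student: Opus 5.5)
The argument is pure linear algebra on the data equation $\bar{X}_{+}-B_\zeta \bar{U}_{-}=A_{\zeta,s}\bar{X}_{-}$, in the same way as the rank condition \eqref{c14} was obtained for the case where $B_s$ is known. Since $B_\zeta$ is known, the left-hand side $\bar{X}_{+}-B_\zeta\bar{U}_{-}$ is a known matrix. The true $A_{\zeta,s}$ always belongs to $\bar{\Sigma}_{i/s}$.

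The first step is to describe $\bar{\Sigma}_{i/s}$ as an affine set. For any $A_\zeta\in\bar{\Sigma}_{i/s}$, subtracting the true data equation gives $(A_\zeta-A_{\zeta,s})\bar{X}_{-}=\bf{0}$, so $A_\zeta-A_{\zeta,s}\in\bar{\Sigma}^0_{i/s}$. Conversely, for any $A^0_\zeta\in\bar{\Sigma}^0_{i/s}$ we have $A_{\zeta,s}+A^0_\zeta\in\bar{\Sigma}_{i/s}$. Hence $\bar{\Sigma}_{i/s}=A_{\zeta,s}+\bar{\Sigma}^0_{i/s}$, and the data are informative for system identification exactly when $\bar{\Sigma}^0_{i/s}=\{\bf{0}\}$.

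The second step characterizes when $\bar{\Sigma}^0_{i/s}$ is trivial.
\begin{itemize}
\item If $\textup{rank}(\bar{X}_{-})=n_\zeta$, then $\bar{X}_{-}$ has full row rank and admits a right inverse $\bar{X}_{-}^{\dagger}$. Any $A^0_\zeta$ with $A^0_\zeta\bar{X}_{-}=\bf{0}$ then satisfies $A^0_\zeta=A^0_\zeta\bar{X}_{-}\bar{X}_{-}^{\dagger}=\bf{0}$. As a by-product, $A_{\zeta,s}=(\bar{X}_{+}-B_\zeta\bar{U}_{-})\bar{X}_{-}^{\dagger}$.
\item If $\textup{rank}(\bar{X}_{-})<n_\zeta$, there is a nonzero $v\in\mathbb{R}^{n_\zeta}$ with $v^T\bar{X}_{-}=\bf{0}$. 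Then $A^0_\zeta=wv^T$ with any nonzero $w\in\mathbb{R}^{n_\zeta}$ is a nonzero element of $\bar{\Sigma}^0_{i/s}$. So $A_{\zeta,s}+wv^T$ is a second matrix consistent with the data, and the data are not informative.
\end{itemize}

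There is no real obstacle here. The only point to keep in view is that $\bar{\Sigma}_{i/s}$ ranges over all $A_\zeta\in\mathbb{R}^{n_\zeta\times n_\zeta}$, without imposing the particular structure $(I_{m+p}\otimes\mathcal{A})+\begin{bmatrix}{\bf0}\\I_p\otimes b\end{bmatrix}CM$. This matches the definition in \eqref{sigmaisz}. Consequently, the rank condition is exactly the condition for uniqueness in that unstructured class.
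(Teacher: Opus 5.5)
Your argument is correct and is essentially the paper's own: the paper gives no separate proof and treats the lemma as the direct analogue of the rank condition \eqref{c14}, and your decomposition $\bar{\Sigma}_{i/s}=A_{\zeta,s}+\bar{\Sigma}^0_{i/s}$, combined with the right-inverse and rank-one $wv^T$ arguments, is exactly the linear algebra behind that ``evident'' claim. Like the paper, you treat $\bar{X}_{+}-B_\zeta\bar{U}_{-}=A_{\zeta,s}\bar{X}_{-}$ as exact, which assumes $e_x(t)$ is negligible for $t\geq T_0$; and your remark that $\bar{\Sigma}_{i/s}$ ranges over unstructured $A_\zeta$ is the correct reading of \eqref{sigmaisz}.
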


 {\color{black}
\begin{rem}
\cite{Persis2019} also studied the stabilization problem for the unknown linear discrete-time system based on input and output data. They constructed an ancillary system as follows:
\begin{equation}\label{anci}
\begin{aligned}
\chi(t+1)=A_\chi \chi(t) +B_\chi u(t)
\end{aligned}
\end{equation}
where
$\chi(t)=\col(y(t-n), y(t-n+1), \cdots, y(t-1), u(t-n), u(t-n+1), \cdots, u(t-1)) \in \mathbb{R}^{n_\zeta}$.
\begin{tiny}
\begin{equation}\nonumber
\begin{aligned}
A_\chi=&\begin{bmatrix}
{\bf 0} & I_p &  {\bf 0} & \cdots & {\bf 0} & {\bf 0} & {\bf 0}  & {\bf 0} & \cdots & {\bf 0} \\

{\bf 0} & {\bf 0} & I_p & \cdots  & {\bf 0} & {\bf 0} & {\bf 0}  & {\bf 0} & \cdots & {\bf 0}\\

\vdots & \vdots & \vdots & \ddots & \vdots & \vdots & \vdots & \vdots & \ddots & \vdots\\

{\bf 0} & {\bf 0} & {\bf 0} & \cdots  & I_p & {\bf 0} & {\bf 0}  & {\bf 0} & \cdots & {\bf 0}\\

-A_1 & -A_2 & -A_3 & \cdots & -A_n & B_1 & B_2 & B_3 & \cdots & B_n\\

\hline

{\bf 0} & {\bf 0} &  {\bf 0} & \cdots & {\bf 0} & {\bf 0} & I_m & {\bf 0} & \cdots & {\bf 0} \\

{\bf 0} & {\bf 0} &  {\bf 0} & \cdots & {\bf 0} & {\bf 0} & {\bf 0} &  I_m & \cdots & {\bf 0} \\

\vdots & \vdots & \vdots & \ddots & \vdots & \vdots & \vdots & \vdots & \ddots & \vdots\\

{\bf 0} & {\bf 0} &  {\bf 0} & \cdots & {\bf 0} & {\bf 0} & {\bf 0} &  {\bf 0} & \cdots &  I_m \\

{\bf 0} & {\bf 0} &  {\bf 0} & \cdots & {\bf 0} & {\bf 0} & {\bf 0} &  {\bf 0} & \cdots &  {\bf 0} \\
\end{bmatrix},\\
B_\chi=&\begin{bmatrix}
{\bf 0} & {\bf 0} & \cdots &  {\bf 0} & {\bf 0} & {\bf 0} & {\bf 0} & \cdots &  {\bf 0} & I_m
\end{bmatrix}^T
\end{aligned}
\end{equation}
\end{tiny}
and  $A_1, A_2, \cdots, A_n, B_1, B_2, \cdots, B_n$ are some matrices determined by $A, B, C$ and thus are unknown.
Then the paper showed
the  output feedback stabilization problem of  \eqref{lisys2} can be converted to the state feedback stabilization problem for the ancillary system (\ref{anci}). However, as pointed out in \cite{Waarde2020}, the results in \cite{Persis2019} assume that the input is persistently exciting of sufficiently high order.
Moreover, the paper did not take advantage of the fact that the input matrix of the ancillary system is known. To obtain an output feedback control law, one needs to
verify the satisfaction of condition 2 of Theorem \ref{thm4} here, which is much more stringent than the condition \eqref{LMI2}.
In contrast, by leveraging the known input matrix, {\color{black}our informativity condition is milder as indicated in Table \ref{tab}} and the approach for computing the state feedback control law for stabilization becomes more straightforward. Furthermore, we will show that, by appropriately selecting the cost function weights, the state feedback LQR solution of our ancillary system will approximate that of the original system while \cite{Persis2019} did not consider the relationship of the solutions to the LQR problem between the ancillary system and the original system.
\end{rem}}

Corresponding to Lemma \ref{lemStateInfo} and Theorem \ref{thm7}, we  immediately obtain the following two results, respectively:

{\color{black}\begin{lem}\label{stainfo2}    
	The data $\bar{\mathcal{D}}$ are informative for stabilization of \eqref{dyz2bf2}  by state feedback if and only if the matrix $\bar{X}_{-}$ has full row rank and there exists a right inverse $\bar{X}_{-}^{\dagger}$ of $\bar{X}_{-}$ and $T_\zeta \in \mathbb{R}^{m \times n_\zeta}$ such that $\bar{X}_{+}\bar{X}_{-}^{\dagger}+B_\zeta T_\zeta$ is stable.
\end{lem}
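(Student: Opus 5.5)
The plan is to transcribe the proof of Lemma \ref{lemStateInfo} (together with Lemma \ref{lem1}) to the ancillary system \eqref{dyz2bf2}. Make the substitutions $x \to \zeta$, $A_s \to A_{\zeta,s}$, $B_s \to B_\zeta$, $n \to n_\zeta$, and $(U_{-},X_{-},X_{+}) \to (\bar{U}_{-},\bar{X}_{-},\bar{X}_{+})$. The only structural facts those proofs use are the data identity $X_{+}=A_sX_{-}+B_sU_{-}$ with $B_s$ known, and the definitions of the sets in \eqref{sigmais}. Here the data are collected for $t\geq T_0$, and we take them to satisfy $\bar{X}_{+}=A_{\zeta,s}\bar{X}_{-}+B_\zeta\bar{U}_{-}$ exactly, as stated just before \eqref{sigmaisz}. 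The sets in \eqref{sigmaisz} are then exact analogues of those in \eqref{sigmais}.

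\emph{Necessity.} Suppose $K_\zeta$ satisfies $\bar{\Sigma}_{i/s}\subseteq\bar{\Sigma}_{K_\zeta}$. First, rerun Lemma \ref{lem1}. For $A^0_\zeta\in\bar{\Sigma}^0_{i/s}$ and $A_\zeta\in\bar{\Sigma}_{i/s}$, we have $A_\zeta+\alpha A^0_\zeta\in\bar{\Sigma}_{i/s}$ for every real $\alpha$. Hence $A_\zeta+B_\zeta K_\zeta+\alpha A^0_\zeta$ is stable, so the spectral radius of $\frac{1}{\alpha}(A_\zeta+B_\zeta K_\zeta)+A^0_\zeta$ is less than $1/\alpha$ for $\alpha>0$. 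Letting $\alpha\to\infty$ and using continuity of the spectrum shows that $A^0_\zeta$ is nilpotent. Since $(A^0_\zeta)^TA^0_\zeta\in\bar{\Sigma}^0_{i/s}$ as well, it is a symmetric nilpotent matrix, hence zero. Therefore $\bar{\Sigma}^0_{i/s}=\{\mathbf{0}\}$, which means $\bar{X}_{-}$ has full row rank $n_\zeta$ and $\bar{\Sigma}_{i/s}=\{A_{\zeta,s}\}$.

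Next, choose any right inverse $\bar{X}_{-}^{\dagger}$ and set $T_\zeta=K_\zeta-\bar{U}_{-}\bar{X}_{-}^{\dagger}$. The identity
\begin{equation*}
\bar{X}_{+}\bar{X}_{-}^{\dagger}+B_\zeta T_\zeta=A_{\zeta,s}+B_\zeta\bar{U}_{-}\bar{X}_{-}^{\dagger}+B_\zeta T_\zeta=A_{\zeta,s}+B_\zeta K_\zeta
\end{equation*}
then shows that $\bar{X}_{+}\bar{X}_{-}^{\dagger}+B_\zeta T_\zeta$ is stable.

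\emph{Sufficiency.} If $\bar{X}_{-}$ has full row rank, then $\bar{\Sigma}_{i/s}=\{A_{\zeta,s}\}$. Set $K_\zeta=\bar{U}_{-}\bar{X}_{-}^{\dagger}+T_\zeta$. The same identity, read backwards as in \eqref{eqK}, gives $A_{\zeta,s}+B_\zeta K_\zeta=\bar{X}_{+}\bar{X}_{-}^{\dagger}+B_\zeta T_\zeta$, which is stable. Hence $\bar{\Sigma}_{i/s}\subseteq\bar{\Sigma}_{K_\zeta}$.

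\emph{Main obstacle.} The algebra is routine. The one point needing care is that the data relation $\bar{X}_{+}=A_{\zeta,s}\bar{X}_{-}+B_\zeta\bar{U}_{-}$ holds exactly. System \eqref{dyz2bf} carries the term $d_\zeta(t)$, which depends on $e_x(t)$. This term vanishes identically once $e_x\equiv 0$, for example when $L$ is chosen to make $A-LC$ nilpotent and $T_0\geq n$. Otherwise the relation holds only asymptotically. We adopt the paper's standing convention that the collected data satisfy the ancillary model \eqref{dyz2bf2}, under which the argument above goes through verbatim.
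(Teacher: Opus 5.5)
Your proposal is correct and follows the paper's route. The paper gets this lemma ``immediately'' by applying Lemma \ref{lemStateInfo} (whose necessity part rests on Lemma \ref{lem1}) to the ancillary system \eqref{dyz2bf2} with known input matrix $B_\zeta$, which is exactly the transcription you carry out. Your caveat is also accurate and goes slightly beyond the paper: $\bar{X}_{+}=A_\zeta\bar{X}_{-}+B_\zeta\bar{U}_{-}$ holds exactly only when $d_\zeta$ vanishes on the sampling window (e.g., $A-LC$ nilpotent and $T_0\geq n$), and the paper simply asserts this relation.
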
}

and

\begin{thm}\label{thm14}
	The data $\bar{\mathcal{D}}$ are informative for stabilization of \eqref{dyz2bf2}  by state feedback if and only if there exists matrices $\bar{\Theta} \in \mathbb{R}^{\bar{T} \times n_\zeta}$ and $\bar{T}_p \in \mathbb{R}^{m \times n_\zeta}$ satisfies
	\begin{equation}\label{LMI2}
		\begin{aligned}
			\begin{bmatrix}
				\bar{X}_{-} \bar{\Theta} & \bar{X}_{+} \bar{\Theta}+B_\zeta \bar{T}_p\\
				(\bar{X}_{+}\bar{\Theta}+B_\zeta \bar{T}_p)^T & \bar{X}_{-}\bar{\Theta}
			\end{bmatrix} \succ 0.
		\end{aligned}
	\end{equation}
	Let $T_\zeta=\bar{T}_p (\bar{X}_{-} \bar{\Theta})^{-1}$. Then $K_\zeta$ satisfies $\bar{\Sigma}_{i/s} \subseteq \bar{\Sigma}_{K_\zeta}$ if and only if $K_\zeta=\bar{U}_{-} \bar{\Theta} (\bar{X}_{-}\bar{\Theta})^{-1}+T_\zeta$.
\end{thm}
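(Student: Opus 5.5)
The plan is to transfer Theorem \ref{thm7} verbatim to the ancillary system \eqref{dyz2bf2}, which has exactly the structure assumed in Sections \ref{data infor stabilization}: an unknown state matrix $A_\zeta$ and a known input matrix $B_\zeta$. The data satisfy $\bar{X}_{+}=A_\zeta\bar{X}_{-}+B_\zeta\bar{U}_{-}$ exactly, because $d_\zeta(t)$ is dropped in \eqref{dyz2bf2}. The substitutions are $X_{\pm}\to\bar{X}_{\pm}$, $U_{-}\to\bar{U}_{-}$, $B_s\to B_\zeta$, $n\to n_\zeta$, $N\to\bar{T}$, together with the sets \eqref{sigmaisz} in place of \eqref{sigmais}. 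The argument is then the same as before, with Lemma \ref{stainfo2} playing the role of Lemma \ref{lemStateInfo}.

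\emph{If part.} Suppose \eqref{LMI2} holds, with $\bar{X}_{-}\bar{\Theta}$ symmetric as in \eqref{LMI1}.
\begin{itemize}
\item The diagonal block gives $\bar{X}_{-}\bar{\Theta}\succ0$. Hence $\bar{X}_{-}$ has full row rank, $\bar{\Sigma}^0_{i/s}=\{\bf 0\}$, and $\bar{\Sigma}_{i/s}=\{A_{\zeta,s}\}$.
\item Set $\bar{X}_{-}^{\dagger}=\bar{\Theta}(\bar{X}_{-}\bar{\Theta})^{-1}$, which is a right inverse of $\bar{X}_{-}$, and $T_\zeta=\bar{T}_p(\bar{X}_{-}\bar{\Theta})^{-1}$.
\item A Schur complement, exactly as in \eqref{eq24}, gives $P-FPF^T\succ0$ with $P=\bar{X}_{-}\bar{\Theta}$ and $F=\bar{X}_{+}\bar{X}_{-}^{\dagger}+B_\zeta T_\zeta$. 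This is a Lyapunov inequality, so $F$ is stable.
\item The computation \eqref{eqK} gives $F=A_{\zeta,s}+B_\zeta(\bar{U}_{-}\bar{X}_{-}^{\dagger}+T_\zeta)$. So $K_\zeta=\bar{U}_{-}\bar{\Theta}(\bar{X}_{-}\bar{\Theta})^{-1}+T_\zeta$ satisfies $\bar{\Sigma}_{i/s}\subseteq\bar{\Sigma}_{K_\zeta}$.
\end{itemize}

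\emph{Only if part.} Use Lemma \ref{stainfo2}.
\begin{itemize}
\item Take a right inverse $\bar{X}_{-}^{\dagger}$ and a $T_\zeta$ such that $F=\bar{X}_{+}\bar{X}_{-}^{\dagger}+B_\zeta T_\zeta$ is stable.
\item Since $F^T$ is also stable, choose $P\succ0$ with $FPF^T-P\prec0$.
\item Put $\bar{\Theta}=\bar{X}_{-}^{\dagger}P$ and $\bar{T}_p=T_\zeta P$. Then $\bar{X}_{-}\bar{\Theta}=P$ is symmetric and positive definite.
\item Reversing the Schur complement gives \eqref{LMI2}.
\end{itemize}

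\emph{Characterization of the gain.} The converse direction is the part I expect to be the main obstacle: every $K_\zeta$ with $\bar{\Sigma}_{i/s}\subseteq\bar{\Sigma}_{K_\zeta}$ must be of the stated form for some feasible pair $(\bar{\Theta},\bar{T}_p)$. Given such a $K_\zeta$, proceed as follows.
\begin{itemize}
\item Fix any right inverse $\bar{X}_{-}^{\dagger}$ and define $T_\zeta=K_\zeta-\bar{U}_{-}\bar{X}_{-}^{\dagger}$.
\item Then $\bar{X}_{+}\bar{X}_{-}^{\dagger}+B_\zeta T_\zeta=A_{\zeta,s}+B_\zeta K_\zeta$, which is stable.
\item The only-if construction then produces $(\bar{\Theta},\bar{T}_p)$ satisfying \eqref{LMI2}, with $\bar{U}_{-}\bar{\Theta}(\bar{X}_{-}\bar{\Theta})^{-1}+\bar{T}_p(\bar{X}_{-}\bar{\Theta})^{-1}=\bar{U}_{-}\bar{X}_{-}^{\dagger}+T_\zeta=K_\zeta$.
\end{itemize}
This is just bookkeeping, but it is where the extra freedom in $T_\zeta$ must be tracked carefully. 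Because $B_\zeta$ is known, the freedom in $\bar{T}_p$ absorbs any $K_\zeta$; no image constraint of the type required in \cite{Waarde2020} is needed.
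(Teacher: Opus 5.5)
Your proposal is correct and follows the paper's route. The paper obtains Theorem~\ref{thm14} ``immediately'' by transplanting the proof of Theorem~\ref{thm7} to the ancillary system with known $B_\zeta$: a Schur complement turns the LMI into a Lyapunov inequality for the if part, and the only-if part uses Lemma~\ref{stainfo2} with $\bar{\Theta}=\bar{X}_{-}^{\dagger}P$, $\bar{T}_p=T_\zeta P$. Your extra bookkeeping step supplies the converse of the gain characterization, which Theorem~\ref{thm7} never states and the paper leaves unproved: you take $T_\zeta=K_\zeta-\bar{U}_{-}\bar{X}_{-}^{\dagger}$ and rerun the only-if construction to show that every stabilizing $K_\zeta$ has the stated form for some feasible $(\bar{\Theta},\bar{T}_p)$.
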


{\color{black}
\begin{rem}
The robustness of the stabilization using the LMI-based approach has been studied in \cite{Persis2019}. Specifically, under sufficiently small perturbations of the collected data, the LMI-based approach can still provide a stabilizing feedback gain since the eigenvalues of $A + BK$ are continuous with respect to the gain $K$. This indicates that the  LMI-based approach has some intrinsic degree of robustness to small perturbations. 
\end{rem}
}

Corresponding to Theorem \ref{thmdeadbeat2}, we immediately obtain the following result:
\begin{thm} \label{thmnilp}
The data $\bar{\mathcal{D}}$ are informative for deadbeat control of \eqref{dyz2bf2} by state feedback if and only if the matrix $\bar{X}_{-}$ has full row rank and {\color{black}there exists a right inverse} $\bar{X}_{-}^{\dagger}$ of $\bar{X}_{-}$ and $T_\zeta \in \mathbb{R}^{m \times n_\zeta}$ such that $\bar{X}_{+}\bar{X}_{-}^{\dagger}+B_\zeta T_\zeta$ is nilpotent.
\end{thm}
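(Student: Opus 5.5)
The plan is to transcribe the proof of Theorem \ref{thmdeadbeat2} to the ancillary system \eqref{dyz2bf2}, making the substitutions $(A_s,B_s,U_{-},X_{-},X_{+},n)\to(A_{\zeta,s},B_\zeta,\bar{U}_{-},\bar{X}_{-},\bar{X}_{+},n_\zeta)$. The only fact about the data that the argument needs is the identity $\bar{X}_{+}=A_{\zeta,s}\bar{X}_{-}+B_\zeta\bar{U}_{-}$. This identity holds exactly because the data are generated by the ancillary dynamics with $B_\zeta$ known. I will treat the data as satisfying it without the disturbance $d_\zeta$, as the paper implicitly does when it chooses $T_0$ so that $e_x$ has decayed.

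\emph{If part.} Suppose $\bar{X}_{-}$ has full row rank. Then $\bar{\Sigma}^0_{i/s}=\{\bf{0}\}$, so $\bar{\Sigma}_{i/s}=\{A_{\zeta,s}\}$. Given the right inverse $\bar{X}_{-}^{\dagger}$ and the matrix $T_\zeta$ with $\bar{X}_{+}\bar{X}_{-}^{\dagger}+B_\zeta T_\zeta$ nilpotent, set $K_\zeta=\bar{U}_{-}\bar{X}_{-}^{\dagger}+T_\zeta$. Then
\[
\bar{X}_{+}\bar{X}_{-}^{\dagger}+B_\zeta T_\zeta=\begin{bmatrix}A_{\zeta,s} & B_\zeta\end{bmatrix}\begin{bmatrix}\bar{X}_{-}\\ \bar{U}_{-}\end{bmatrix}\bar{X}_{-}^{\dagger}+B_\zeta T_\zeta=A_{\zeta,s}+B_\zeta K_\zeta,
\]
so $A_{\zeta,s}+B_\zeta K_\zeta$ is nilpotent. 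Since $\bar{\Sigma}_{i/s}=\{A_{\zeta,s}\}$, this gives $\bar{\Sigma}_{i/s}\subseteq\bar{\Sigma}^{\text{nil}}_{K_\zeta}$.

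\emph{Only if part.} A nilpotent matrix has all eigenvalues equal to zero and is therefore stable. Hence informativity for deadbeat control implies informativity for stabilization with the same gain $K_\zeta$. Next I apply the analogue of Lemma \ref{lem1} for \eqref{dyz2bf2}; its proof carries over verbatim. Take $A^0_\zeta\in\bar{\Sigma}^0_{i/s}$. Then $A_\zeta+\alpha A^0_\zeta\in\bar{\Sigma}_{i/s}$ for every $\alpha\in\mathbb{R}$. Let $\alpha\to\infty$ in the stability of $A_\zeta+B_\zeta K_\zeta+\alpha A^0_\zeta$; this forces $A^0_\zeta$ to be nilpotent. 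The same holds for $(A^0_\zeta)^TA^0_\zeta$, which also lies in $\bar{\Sigma}^0_{i/s}$. A symmetric nilpotent matrix is zero, so $A^0_\zeta=\bf{0}$. Thus $\bar{X}_{-}$ has full row rank and $\bar{\Sigma}_{i/s}=\{A_{\zeta,s}\}$.

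It follows that $A_{\zeta,s}+B_\zeta K_\zeta$ is nilpotent. Pick any right inverse $\bar{X}_{-}^{\dagger}$ and set $T_\zeta=K_\zeta-\bar{U}_{-}\bar{X}_{-}^{\dagger}$. The identity displayed above, read backwards, gives $\bar{X}_{+}\bar{X}_{-}^{\dagger}+B_\zeta T_\zeta=A_{\zeta,s}+B_\zeta K_\zeta$, which is nilpotent.

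I do not expect a real obstacle. The one point to watch is that the Lemma \ref{lem1} argument uses only the affine structure of $\bar{\Sigma}_{i/s}$ and not any property of $B_\zeta$. This is true, so the argument needs no change.
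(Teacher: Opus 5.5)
Your proposal is correct and is essentially the paper's own argument. The paper obtains this theorem by transcribing the proof of Theorem~\ref{thmdeadbeat2} to the ancillary system: the if part uses $K_\zeta=\bar{U}_{-}\bar{X}_{-}^{\dagger}+T_\zeta$, and the only-if part goes deadbeat $\Rightarrow$ stabilization, then uses the Lemma~\ref{lem1} argument to get full row rank of $\bar{X}_{-}$, then sets $T_\zeta=K_\zeta-\bar{U}_{-}\bar{X}_{-}^{\dagger}$, all relying on the data identity $\bar{X}_{+}=A_\zeta\bar{X}_{-}+B_\zeta\bar{U}_{-}$ exactly as you do.
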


\subsection{{\color{black}Relationship of the LQR Problem between System \eqref{lisys2} and the Ancillary System \eqref{dyz2bf2}}}\label{property}

The LQR problem for \eqref{lisys2} is to find a control law $u(t)=Kx(t)$ such that the cost $\sum_{t=0}^{\infty}(x^T(t)Qx(t)+u^T(t){R}u(t))$ is minimized for some  $Q \succeq 0, R \succ 0$.
On the other hand, the LQR problem for the ancillary system \eqref{dyz2bf2}
is to find a control law $u(t)=u_\zeta(t)={K}_\zeta \zeta(t)$ to minimize the cost $\sum_{t=0}^{\infty}(\zeta^T(t){Q_\zeta}\zeta(t)+u^T(t){R}u(t))$
for some  $Q_\zeta \succeq 0$   and $R \succ 0$. Clearly, if 	 $Q_\zeta= Q^T_\zeta \succeq 0$ is such that $(A_\zeta, \sqrt{Q_\zeta})$ is detectable and $(A_\zeta, B_\zeta)$ is stabilizable, then
	the following DARE
\begin{equation}\label{areliz}
	\begin{aligned}
		A_\zeta^T P_\zeta A_\zeta+Q_\zeta
		-A_\zeta^T P_\zeta B_\zeta(R+B_\zeta^T P_\zeta B_\zeta)^{-1}B_\zeta^T P_\zeta A_\zeta=P_\zeta
	\end{aligned}
\end{equation}	
admits a unique positive semidefinite solution $P^*_\zeta$ and the optimal gain is $K_\zeta^*=-(R+B_\zeta^T{P}_\zeta^*B_\zeta)^{-1}B_\zeta^T{P}_\zeta^*A_\zeta$.

\begin{rem}
In  \eqref{DARE}, $Q$ can be any symmetric positive semidefinite matrix such that $(A, \sqrt{Q})$ is detectable. On the other hand, in \eqref{areliz} ,  $Q_\zeta$ can be any symmetric positive semidefinite matrix such that $(A_\zeta, \sqrt{Q_\zeta})$ is detectable. The optimal gain $K^*_\zeta$ bears no relationship with the optimal gain $K^*$ for solving the LQR problem for system \eqref{lisys2}. Next, we will show that,  if  $Q = C^T Q_y C$ for some symmetric positive definite matrix $Q_y$ and  $Q_\zeta=M^TC^TQ_yCM$,  then $K^*_\zeta  =  K^* M$.
\end{rem}

Let us first state the following result.

\begin{lem}\label{lem3}
	Under Assumption 1, the pair $(A_\zeta,\sqrt{Q}_yCM)$ is detectable for any $Q_y \succ 0$.
\end{lem}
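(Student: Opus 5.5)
The plan is to apply the Popov--Belevitch--Hautus (PBH) test for detectability directly to the pair $(A_\zeta,\sqrt{Q}_yCM)$. Recall that this pair is detectable if and only if there is no nonzero vector $v\in\mathbb{C}^{n_\zeta}$ with $A_\zeta v=\lambda v$ for some $|\lambda|\geq 1$ and $\sqrt{Q}_yCMv={\bf 0}$. So I would argue by contradiction. Suppose such a $v\neq {\bf 0}$ and such a $\lambda$ with $|\lambda|\ge 1$ exist.

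\emph{Step 1: remove the weight.} Since $Q_y\succ 0$, its square root $\sqrt{Q}_y$ is nonsingular. Hence $\sqrt{Q}_yCMv={\bf 0}$ is equivalent to $CMv={\bf 0}$.

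\emph{Step 2: reduce to the known block-diagonal matrix.} By the definition of $A_\zeta$,
\begin{equation*}
A_\zeta v=(I_{m+p}\otimes\mathcal{A})v+\begin{bmatrix}{\bf 0}\\ I_p\otimes b\end{bmatrix}CMv=(I_{m+p}\otimes\mathcal{A})v .
\end{equation*}
Therefore $(I_{m+p}\otimes\mathcal{A})v=\lambda v$ with $v\neq{\bf 0}$. This says that $\lambda$ is an eigenvalue of $I_{m+p}\otimes\mathcal{A}$.

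\emph{Step 3: use stability of $\mathcal{A}$.} The spectrum of $I_{m+p}\otimes\mathcal{A}$ equals the spectrum of $\mathcal{A}$. Since $\mathcal{A}$ is a companion matrix, its spectrum is the set of roots of $\Lambda(z)=\det(zI-A+LC)$. Under Assumption \ref{ass1}, $L$ is chosen so that $A-LC$ is stable, so every eigenvalue of $\mathcal{A}$ lies strictly inside the unit circle. This contradicts $|\lambda|\ge1$. Hence no such $v$ exists, and the pair is detectable for every $Q_y\succ 0$.

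I do not expect a real obstacle. The only point needing care is Step 2: the unknown part of $A_\zeta$ enters only through the term $CM$, which is annihilated precisely by the unobservable direction. Assumption \ref{ass1} is used only to guarantee that $\Lambda(z)$, and hence $\mathcal{A}$, can be chosen Schur.
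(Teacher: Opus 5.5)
Your proof is correct and rests on the same observation as the paper's: the unknown part of $A_\zeta$ is exactly an output-injection term through $CM$, so removing it leaves the Schur matrix $I_{m+p}\otimes\mathcal{A}$. The paper makes this constructive by exhibiting the gain $L_\zeta=\begin{bmatrix}{\bf 0}\\ I_p\otimes b\end{bmatrix}\sqrt{Q_y}^{-1}$ with $A_\zeta-L_\zeta\sqrt{Q_y}CM=I_{m+p}\otimes\mathcal{A}$, whereas you check the equivalent PBH eigenvector condition directly.
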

{\color{black}
\begin{Prf}
Since $Q_y \succ 0$, $\sqrt{{Q_y}}$ is an invertible matrix. Let $L_\zeta=\begin{bmatrix}
	{\bf0}\\I_p\otimes b
\end{bmatrix}\sqrt{{Q_y}}^{-1}$. Then, we have $A_\zeta-L_\zeta\sqrt{{Q_y}}CM=
I_{m+p} \otimes \mathcal{A}$. Since $\mathcal{A}$ is stable, $A_\zeta-L_\zeta\sqrt{{Q_y}}CM$ is stable. Thus, the pair $(A_\zeta, \sqrt{{Q_y}}CM)$ is detectable.
\end{Prf}}

\begin{rem}\label{rem10}
This result is parallel to  Lemma 7 of \cite{Lin2024}, which considered data-based control of  the continuous-time systems.

\end{rem}

\begin{rem}\label{rem11}
	If  $Q = C^T Q_y C$ for some $Q_y\succ 0$, then, under Assumption \ref{ass1}, the pair $(A, \sqrt{Q_y}C)$ is observable.
	Thus, the DARE \eqref{DARE} admits a unique positive definite solution ${P}^*$ and hence the LQR problem is solvable for $(A,B,C^TQ_yC,R)$ if and only if the pair $(A,B)$ is stabilizable.
	The optimal control law is given by  $K^*=-(R+B^T{P}^*B)^{-1}B^T{P}^* A$. On the other hand, by Lemma \ref{lem3},
	under Assumption \ref{ass1}, the pair   $(A_\zeta, \sqrt{{Q_y}}CM)$ is detectable.
\end{rem}

\begin{thm}\label{thm12}
	Under Assumption \ref{ass1}, and the assumption that the pair $(A, B)$ is stabilizable, let
	$Q = C^T Q_y C$ for some $Q_y\succ 0$ and $Q_\zeta=M^TC^TQ_yCM$. Then,
	the  algebraic Riccati equation \eqref{areliz}
admits a unique positive semidefinite solution ${P}_\zeta^*=M^TP^*M \succeq 0$, where $P^*$ is the unique positive definite solution to \eqref{DARE}, and the optimal gain $K_\zeta^*$ that solves the LQR problem  for $(A_\zeta,B_\zeta,M^TC^TQ_yCM,R)$ is such that $K^*_\zeta  =  K^* M$.
\end{thm}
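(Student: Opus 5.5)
The plan is to verify directly that $P_\zeta := M^T P^* M$ satisfies the DARE \eqref{areliz} with $Q_\zeta = M^T C^T Q_y C M$. Uniqueness of the positive semidefinite solution then does the rest. Uniqueness holds because $(A_\zeta,B_\zeta)$ is stabilizable by Lemma \ref{lem2} and $(A_\zeta,\sqrt{Q_y}CM)$ is detectable by Lemma \ref{lem3}. Since $Q_\zeta = (\sqrt{Q_y}CM)^T(\sqrt{Q_y}CM)$, detectability of $(A_\zeta,\sqrt{Q_\zeta})$ is equivalent, and the standard result already used in the proof of Theorem \ref{thm9} (Theorem 8 of \cite{kucera1972}) gives a unique positive semidefinite solution $P^*_\zeta$. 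Clearly $M^TP^*M\succeq 0$, so once it satisfies \eqref{areliz} we get $P_\zeta^* = M^TP^*M$.

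The key algebraic identities come from \eqref{rela0}. From the definition of $A_\zeta$, together with \eqref{rela1} and \eqref{rela3},
\begin{equation*}
MA_\zeta = M(I_{m+p}\otimes\mathcal{A}) + M\begin{bmatrix}{\bf 0}\\ I_p\otimes b\end{bmatrix}CM = (A-LC)M + LCM = AM,
\end{equation*}
and from \eqref{rela2} we have $MB_\zeta = B$. Substituting into the left side of \eqref{areliz} gives
\begin{equation*}
\begin{aligned}
&A_\zeta^T M^TP^*M A_\zeta + M^TC^TQ_yCM \\
&\quad - A_\zeta^T M^TP^*MB_\zeta\big(R+B_\zeta^TM^TP^*MB_\zeta\big)^{-1}B_\zeta^TM^TP^*MA_\zeta \\
&= M^T\Big(A^TP^*A + C^TQ_yC - A^TP^*B(R+B^TP^*B)^{-1}B^TP^*A\Big)M \\
&= M^TP^*M,
\end{aligned}
\end{equation*}
where the last equality uses that $P^*$ solves \eqref{DARE} with $Q = C^TQ_yC$. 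Note that $P^*$ exists and is positive definite by Remark \ref{rem11}.

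For the gain, we have
\begin{equation*}
K_\zeta^* = -(R+B_\zeta^TP_\zeta^*B_\zeta)^{-1}B_\zeta^TP_\zeta^*A_\zeta = -(R+B^TP^*B)^{-1}B^TP^*AM = K^*M,
\end{equation*}
using the same two identities.

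The only step I expect to need care is the uniqueness claim. The DARE for $(A_\zeta,B_\zeta)$ has a possibly singular $Q_\zeta$ and $M$ need not be injective, so positive definiteness of $P_\zeta^*$ cannot be claimed, only semidefiniteness. The appeal to stabilizability plus detectability (Lemmas \ref{lem2} and \ref{lem3}) is what guarantees a unique positive semidefinite solution, rather than just a maximal one. I would double-check that the cited uniqueness theorem requires only detectability, not observability. I would also check that the identity $MA_\zeta = AM$ is applied with the correct ordering of the blocks in $M$.
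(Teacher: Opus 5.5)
Your proposal is correct and follows the paper's proof essentially step for step. You get uniqueness of the positive semidefinite solution of \eqref{areliz} from Lemmas \ref{lem2} and \ref{lem3}, substitute $M^TP^*M$ using \eqref{rela0}, and compute $K_\zeta^* = K^*M$ the same way; your explicit identity $MA_\zeta = AM$ (from \eqref{rela1} and \eqref{rela3}) is exactly what underlies the paper's three displayed equalities, and stabilizability plus detectability (not observability) is indeed enough for the uniqueness claim.
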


\begin{proof}
First note that, by Remark \ref{rem11}, under the conditions of this theorem,  the DARE \eqref{DARE}  admits a unique positive definite solution ${P}^*$.
Under Assumption \ref{ass1}, the pair $(A_\zeta,\sqrt{Q}_yCM)$ is detectable by Lemma \ref{lem3}. Also, by Lemma \ref{lem2},  the stabilizability of the pair $(A, B)$ implies  that of the pair $(A_\zeta, B_\zeta)$.
Thus,  the DARE \eqref{areliz} admits a unique positive semidefinite solution ${P}_\zeta^*$.
	
	By \eqref{rela1}-\eqref{rela3}, we have
	\begin{equation}
		\begin{aligned}
			A_\zeta^T(M^TP^*M)A_\zeta=&M^TA^TP^*AM\\
			A_\zeta^T(M^TP^*M)B_\zeta=&M^TA^TP^*B\\
			B_\zeta^T(M^TP^*M)B_\zeta=&B^TP^*B
		\end{aligned}
	\end{equation}
	
We are now ready to show that $M^TP^*M$ is a solution to \eqref{areliz}. Substituting $P_\zeta=M^TP^*M$ into \eqref{areliz} gives
\begin{footnotesize}
		\begin{equation}
			\begin{aligned}
				&A_\zeta^T(M^TP^*M)A_\zeta+M^TQM-M^TP^*M\\
				&-A_\zeta^T(M^TP^*M)B_\zeta(R+B_\zeta^T(M^TP^*M)B_\zeta)^{-1}B_\zeta^T(M^TP^*M)A_\zeta\\
				=&M^TA^TP^*AM+M^TQM-M^TP^*M\\
				&-M^TA^TP^*B(R+B^TP^*B)^{-1}B^TP^*AM\\
				=&M^T(A^TP^*A+Q-P^*-A^TP^*B(R+B^TP^*B)^{-1}B^TP^*A)M\\
				=&\bf{0}
			\end{aligned}
		\end{equation}
	\end{footnotesize}
	
	Since \eqref{areliz} admits a unique positive semidefinite solution, $M^TP^*M$ is the unique positive semidefinite solution to \eqref{areliz}.

Moreover,  we have
\begin{equation}
	\begin{aligned}
		K_\zeta^*=&-(R+B_\zeta^T{P}_\zeta^*B_\zeta)^{-1}B_\zeta^T{P}_\zeta^*A_\zeta\\
		=&-(R+B^TP^*B)^{-1}B^TP^*AM\\
		=&K^*M
	\end{aligned}
\end{equation}

Since $M\zeta(t)$ converges to $x(t)$ exponentially, the control law $u^*_\zeta(t)=K_\zeta^* \zeta(t)=K^*M \zeta(t)$ converges to $u^*(t)=K^* x(t)$ exponentially.
Thus, the LQR solution for system \eqref{dyz2bf} will converge to the LQR solution for system \eqref{dyz2bf2}  exponentially.

\end{proof}


Now we consider the informative problem and control design problem for LQR of the ancillary system \eqref{dyz2bf2}.
For this purpose,
let
\begin{equation}
	\begin{aligned}
		\bar{\Sigma}_{K_\zeta}^{Q_\zeta, R}=\{ A_\zeta | K_\zeta \text{ is the optimal feedback gain for }\\ ( A_\zeta,B_\zeta,Q_\zeta,R)\},
	\end{aligned}
\end{equation}
that is, $\bar{\Sigma}_{K_\zeta}^{Q_\zeta, R}$
is the set of all system matrices $( A_\zeta, B_\zeta)$ such that $K_\zeta$ is the optimal feedback gain corresponding to $Q_\zeta$ and $R$.
The data $\bar{\mathcal{D}}$ are said to be informative for LQR of \eqref{dyz2bf2} if there exists a feedback gain $K_\zeta$ such that $\bar{\Sigma}_{i/s} \subseteq \bar{\Sigma}_{K_\zeta}^{Q_\zeta, R}$.

By Theorem \ref{thm8}, we have the following result on solving the informative problem for LQR.

\begin{thm}\label{thm15}
	Let $Q_\zeta= Q^T_\zeta \succeq 0$ be such that $(A_\zeta, \sqrt{Q_\zeta})$ is detectable and $R= R^T \succ 0$.   Then, under Assumption \ref{ass1}, the data $\bar{\mathcal{D}}$ are informative for LQR if and only if the matrix $\bar{X}_{-}$ has full row rank and there exists a right inverse $\bar{X}_{-}^{\dagger}$ of $\bar{X}_{-}$ and $T_\zeta \in \mathbb{R}^{m \times n_\zeta}$ such that $\bar{X}_{+}\bar{X}_{-}^{\dagger}+B_\zeta T_\zeta$ is stable.
\end{thm}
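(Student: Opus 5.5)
The plan is to obtain Theorem \ref{thm15} as a direct instance of Theorem \ref{thm8}, applied to the ancillary system \eqref{dyz2bf2} in place of \eqref{lisys}. The dictionary is:
\begin{itemize}
\item $A_s \mapsto A_\zeta$ and $B_s \mapsto B_\zeta$;
\item $n \mapsto n_\zeta$ and $Q \mapsto Q_\zeta$;
\item the data $(U_{-},X)$ $\mapsto$ $\bar{\mathcal{D}}=(\bar{U}_{-},\bar{X})$.
\end{itemize}

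\emph{Step 1: verify the structural hypotheses of Theorem \ref{thm8}.}
\begin{itemize}
\item $B_\zeta$ is known, since it depends only on $b$, $m$ and $p$.
\item The data satisfy $\bar{X}_{+}=A_\zeta\bar{X}_{-}+B_\zeta\bar{U}_{-}$ exactly. The disturbance $d_\zeta(t)$ is neglected by working with the ancillary model \eqref{dyz2bf2} itself.
\item $R\succ 0$ is assumed.
\item The detectability of $(A_\zeta,\sqrt{Q_\zeta})$ is a standing hypothesis of the theorem. It plays the role of the detectability of $(A_s,\sqrt{Q})$ in Theorem \ref{thm8}. Assumption \ref{ass1} only guarantees, via Lemma \ref{lem3}, that such $Q_\zeta$ exist; for instance $Q_\zeta=M^TC^TQ_yCM$ works.
\end{itemize}

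\emph{Step 2 (if part).} Suppose $\bar{X}_{-}$ has full row rank and $\bar{X}_{+}\bar{X}_{-}^{\dagger}+B_\zeta T_\zeta$ is stable. Lemma \ref{stainfo2} then gives $\bar{\Sigma}_{i/s}=\{A_{\zeta,s}\}$, with $A_{\zeta,s}=(\bar{X}_{+}-B_\zeta\bar{U}_{-})\bar{X}_{-}^{\dagger}$. Moreover, with $K_\zeta=\bar{U}_{-}\bar{X}_{-}^{\dagger}+T_\zeta$, we have $A_{\zeta,s}+B_\zeta K_\zeta=\bar{X}_{+}\bar{X}_{-}^{\dagger}+B_\zeta T_\zeta$, which is stable. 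Hence $(A_{\zeta,s},B_\zeta)$ is stabilizable. Together with the detectability of $(A_\zeta,\sqrt{Q_\zeta})$, the DARE \eqref{areliz} has a unique positive semidefinite solution, and its gain $K^*_\zeta$ is optimal (by Theorem 8 of \cite{kucera1972}, as used in Theorem \ref{thm8}). Since the identified set is a singleton, $\bar{\Sigma}_{i/s}\subseteq\bar{\Sigma}_{K^*_\zeta}^{Q_\zeta,R}$, so the data are informative for LQR.

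\emph{Step 3 (only if part).} Suppose the data are informative for LQR. Then some $K_\zeta$ is optimal for every $A_\zeta\in\bar{\Sigma}_{i/s}$. An optimal LQR gain is stabilizing by Theorem \ref{thm3}, so $\bar{\Sigma}_{i/s}\subseteq\bar{\Sigma}_{K_\zeta}$, i.e.\ the data are informative for stabilization. Lemma \ref{stainfo2} then yields full row rank of $\bar{X}_{-}$ and the required $\bar{X}_{-}^{\dagger}$ and $T_\zeta$.

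\emph{Main obstacle.} The only delicate point is justifying, in Step 3, that an optimal gain is stabilizing for every $A_\zeta$ in $\bar{\Sigma}_{i/s}$. Before full rank is established, $\bar{\Sigma}_{i/s}$ need not be a singleton, and the detectability hypothesis is stated only for the true $A_\zeta$. I would handle this as Theorem \ref{thm8} does implicitly: the definition of $\bar{\Sigma}_{K_\zeta}^{Q_\zeta,R}$ presupposes that the LQR problem is solvable for each member, with an optimal gain as in \eqref{Kstar}, which is stabilizing by Theorem \ref{thm3}. This is enough to invoke Lemma \ref{lem1}, transferred to \eqref{dyz2bf2}, which forces $\bar{\Sigma}^0_{i/s}=\{\bf 0\}$.
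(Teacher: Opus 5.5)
Your proposal is correct and follows the paper's route: the paper obtains Theorem \ref{thm15} by applying Theorem \ref{thm8} to the ancillary system \eqref{dyz2bf2}, whose input matrix $B_\zeta$ is known. That means using Lemma \ref{stainfo2} in both directions, together with solvability of the DARE \eqref{areliz} under stabilizability and detectability. Your remark that an optimal gain is stabilizing for every member of $\bar{\Sigma}_{i/s}$ simply makes explicit what the paper's step ``informative for LQR implies informative for stabilization'' takes for granted.
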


\begin{rem}\label{rem15}
By Remark \ref{rem11}, Theorem \ref{thm15} is particularly valid if  $Q_\zeta=M^TC^TQ_yCM$, where $Q_y \succ 0$.
\end{rem}

{\color{black}Now, applying} Theorem \ref{thm9} to the ancillary system \eqref{dyz2bf2} and noting  $\bar{Y}_- = C M \bar{X}_- = C X_-$ gives the following result.

\begin{thm}\label{thm16}
	Let $R \succ 0$ and $Q_\zeta=M^TC^TQ_yCM$, where $Q_y \succ 0$. Under Assumption \ref{ass1}, suppose the data $\bar{\mathcal{D}}$ are informative for LQR. Then, ${P}_\zeta^*$ in \eqref{areliz} is the unique solution to the following optimization problem

	\begin{pro} \label{opbar1}
		\begin{equation} \nonumber
			\begin{aligned}
				&\textup{max}~ \textup{tr}(P_\zeta)\\
				&\textup{subject to } P_\zeta \succeq 0\\
				&\begin{bmatrix}
					\bar{A}_\zeta^T P_\zeta \bar{A}_\zeta -\bar{X}_{-}^T {P}_\zeta \bar{X}_{-}+ \bar{Y}_{-}^T Q_y \bar{Y}_{-} & \bar{A}_\zeta^T P_\zeta B_\zeta \\
					B_\zeta ^T P_\zeta \bar{A}_\zeta & R+B_\zeta P_\zeta B_\zeta
				\end{bmatrix} \succeq 0
			\end{aligned}
		\end{equation}
		where $\bar{A}_\zeta =\bar{X}_{+}-B_\zeta \bar{U}_{-}$.
	\end{pro}

	Let ${P}_\zeta^*$ be the optimal solution for Problem \ref{opbar1}. Then, the optimal feedback gain $K_\zeta^*=\bar{U}_{-}\bar{X}_{-}^{\dagger}+T_\zeta^*$, where $\bar{X}_{-}^{\dagger}$ is any matrix such that $\bar{X}_{-} \bar{X}_{-}^{\dagger}=I_{n_\zeta}$, and  $T_\zeta^*=-(R+B_\zeta^T P_\zeta^* B_\zeta)^{-1} (B_\zeta^T P_\zeta^* \bar{X}_{+}+ R \bar{U}_{-}) \bar{X}_{-}^{\dagger}$.
\end{thm}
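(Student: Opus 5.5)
The plan is to treat Theorem~\ref{thm16} as a direct instance of Theorem~\ref{thm9}, applied to the ancillary system \eqref{dyz2bf2} with data $\bar{\mathcal{D}}$. The required substitutions are $(A_s,B_s,X_{-},X_{+},U_{-},Q)$ by $(A_\zeta,B_\zeta,\bar{X}_{-},\bar{X}_{+},\bar{U}_{-},Q_\zeta)$, where $Q_\zeta=M^TC^TQ_yCM$. The only things to check are the hypotheses of Theorem~\ref{thm9} and the rewriting of the term $\bar{X}_{-}^TQ_\zeta\bar{X}_{-}$ in terms of measurable output data.

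First I verify the hypotheses. By Lemma~\ref{lem3}, under Assumption~\ref{ass1} the pair $(A_\zeta,\sqrt{Q_y}CM)$ is detectable. Since $\sqrt{Q_\zeta}$ and $\sqrt{Q_y}CM$ have the same kernel (both Gram matrices equal $Q_\zeta$), this gives detectability of $(A_\zeta,\sqrt{Q_\zeta})$. We also have $R\succ 0$, and by assumption the data $\bar{\mathcal{D}}$ are informative for LQR. Theorem~\ref{thm15} (see Remark~\ref{rem15}) then gives that $\bar{X}_{-}$ has full row rank, so $\bar{\Sigma}_{i/s}=\{A_\zeta\}$, and that $(A_\zeta,B_\zeta)$ is stabilizable. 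Hence \eqref{areliz} has a unique positive semidefinite solution $P_\zeta^*$.

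Next, Theorem~\ref{thm9} says that $P_\zeta^*$ is the unique maximizer of the analogue of Problem~\ref{op1}. That analogue has constraint block $\bar{A}_\zeta^TP_\zeta\bar{A}_\zeta-\bar{X}_{-}^TP_\zeta\bar{X}_{-}+\bar{X}_{-}^TQ_\zeta\bar{X}_{-}$, with $\bar{A}_\zeta=A_\zeta\bar{X}_{-}=\bar{X}_{+}-B_\zeta\bar{U}_{-}$. The key observation is that this block is computable from data. We have $\bar{X}_{-}^TQ_\zeta\bar{X}_{-}=(CM\bar{X}_{-})^TQ_y(CM\bar{X}_{-})$, and I want to replace $CM\bar{X}_{-}$ by $\bar{Y}_{-}$.

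This replacement is the main obstacle. Column-wise, $CM\zeta(t)=Cx(t)+Ce_x(t)=y(t)+Ce_x(t)$, and $e_x(t)=(A-LC)^{t}e_x(0)$ by \eqref{ex}. So the identity $\bar{Y}_{-}=CM\bar{X}_{-}$ holds only once the observer error has vanished. I would handle this the way the paper does when it collects data for $t\ge T_0$. The cleanest choice is to pick $\Lambda(z)=z^n$, i.e., $A-LC$ nilpotent, which is permissible under Assumption~\ref{ass1} by pole placement. Then $e_x(t)=0$ for $t\ge n$, and with $T_0\ge n$ the identity is exact. Otherwise one must accept $e_x(T_0)\approx 0$ as the paper's standing approximation, which also underlies $\bar{X}_{+}=A_\zeta\bar{X}_{-}+B_\zeta\bar{U}_{-}$ (there $d_\zeta$ is neglected). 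Either way, the same convention makes both identities hold, so the argument is consistent.

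Finally, the gain formula follows verbatim from the last part of Theorem~\ref{thm9}. The equation $[B_\zeta^TP_\zeta^*A_\zeta\ \ R+B_\zeta^TP_\zeta^*B_\zeta]\,(\mathrm{col}(\bar{X}_{-},\bar{U}_{-})\bar{X}_{-}^{\dagger}+\mathrm{col}({\bf 0},T_\zeta^*))={\bf 0}$ uniquely determines $T_\zeta^*$ for any right inverse $\bar{X}_{-}^{\dagger}$. Using $A_\zeta\bar{X}_{-}=\bar{X}_{+}-B_\zeta\bar{U}_{-}$ and solving gives $T_\zeta^*=-(R+B_\zeta^TP_\zeta^*B_\zeta)^{-1}(B_\zeta^TP_\zeta^*\bar{X}_{+}+R\bar{U}_{-})\bar{X}_{-}^{\dagger}$ and $K_\zeta^*=\bar{U}_{-}\bar{X}_{-}^{\dagger}+T_\zeta^*$. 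By Theorem~\ref{thm12}, this $K_\zeta^*$ moreover equals $K^*M$.
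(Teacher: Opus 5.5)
Your proposal is correct and follows the paper's route: apply Theorem~\ref{thm9} to the ancillary system \eqref{dyz2bf2}, then use $\bar{Y}_-=CM\bar{X}_-$ to rewrite $\bar{X}_-^TQ_\zeta\bar{X}_-$ as $\bar{Y}_-^TQ_y\bar{Y}_-$. You also make explicit what the paper leaves implicit: the hypotheses are verified (detectability of $(A_\zeta,\sqrt{Q_\zeta})$ via Lemma~\ref{lem3}, and stabilizability from informativity), and $\bar{Y}_-=CM\bar{X}_-$ and $\bar{X}_+=A_\zeta\bar{X}_-+B_\zeta\bar{U}_-$ are exact only once $Ce_x(t)=0$ on the data window, for example with a deadbeat observer and $T_0\ge n$.
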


\begin{rem}
	Let $z(t)=\mbox{col} (\zeta(t),u(t))$, $n_z=mn+pn+m$, $\bar{z}(t)=\text{vecv}(z(t))$,
	$\Phi=[\bar{z}(T_0)-\bar{z}(T_0+1), \bar{z}(T_0+1)-\bar{z}(T_0+2), \cdots, \bar{z}(T_1)-\bar{z}(T_1+1)]$.
	{\color{black}
		In \cite{Rizvi2022}, in order to apply the Q-learning method, they need to fulfill the following data condition:
		\begin{equation}
			\begin{aligned}
				\textup{rank}(\Phi)=\frac{n_z(n_z+1)}{2}
			\end{aligned}
		\end{equation}
		This requirement is more stringent than the rank condition needed for system identification, which is
		\begin{equation}
			\begin{aligned}
				\textup{rank}\left(\bar{X}_{-} \right)=n_\zeta=mn+pn.
			\end{aligned}
		\end{equation}
		given that $B_\zeta$ is known.
		
		By Theorem \ref{thm15}, the rank condition required in our paper to solve the LQR problem is equivalent to the rank condition required for system identification. This equivalence implies that our approach avoids the more restrictive data requirements imposed by Q-learning, while still ensuring optimal control synthesis.}
\end{rem}


{\color{black}
\subsection{How to Choose the Designed Parameters in the Simulation}
The output-based stabilization, deadbeat control, and the LQR problem can all be addressed by the control law \eqref{outputc}, where $K_\zeta$ is obtained by solving an LMI, a pole placement, or an optimization problem, respectively. Thus, for stabilization and deadbeat control, the designed parameters are $\alpha_0, \cdots, \alpha_{n-1}$ of $\mathcal{A}$ and the required waiting time $T_0$. On the other hand,  the LQR problem additionally requires the weighting matrices $Q$ and $R$.  

For the stabilization problem, the designed parameters $\alpha_0, \cdots, \alpha_{n-1}$ of $\mathcal{A}$ are mainly  chosen by the requirement on the transient response of $\zeta$ dynamics.  For the deadbeat control, the observer must converge within a finite time, so the eigenvalues of $\mathcal{A}$ are placed at the origin. 
For the LQR  weight tuning, it is well known that  a larger $R$ incurs small control effort, while a larger $Q$
prioritizes faster state convergence.

The waiting time $T_0$  is also determined by the locations of eigenvalues of $\mathcal{A}$. 
Let $\epsilon$ 
be the tolerance range. The minimum waiting time can then be estimated as $T_0=\frac{\ln(\epsilon)}{\ln(a)}$, where $a$ is the largest magnitude of the eigenvalues of the matrix $\mathcal{A}$. 
}

\section{{\color{black}Conclusion and Future work}}\label{conclusion}
In this paper, we have first focused on developing data informativity analysis and control for an unknown linear discrete-time system with a known input matrix.
We have derived a set of much simpler conditions for data informativity for stabilization, deadbeat control, and the LQR for such systems, which in turn has led to {\color{black}more efficient methods} and flexibility
for computing state feedback control laws for stabilization, deadbeat control, and the LQR for such systems.
Further, we have shown that the problem of computing a dynamic output feedback control law for an unknown system can be converted to the problem of designing a state feedback control law for an ancillary system whose input
matrix is known. Therefore, the results of the first part of this paper can be directly used to {\color{black}compute a dynamic output feedback control law for stabilization and deadbeat control for unknown systems based on only the input and output data.}  Moreover, we have presented a dynamic output feedback control law which asymptotically approaches a state feedback LQR solution for the original unknown system.

{\color{black}
Since the mildest condition on the data given in this paper is equivalent to that required for system identification and the solvability of some LMIs. Consequently, a natural direction for future research is that: 1) further consider the control problems subject to noisy input and output data; 2) see if the informative conditions can be further relaxed; 3) combine our approach with the MPC framework to further address the state and input constraints;  and 4) apply our approach to some practical scenarios. }

\ifCLASSOPTIONcaptionsoff
  \newpage
\fi

\section{Appendix}
\subsection{Proof of Lemma \ref{lem2}}
\begin{Prf}
	If part:   Let $K$ be such that $A + B K$ is stable. {\color{black}To prove the pair $(A_\zeta, B_\zeta)$ is stabilizable, it suffices to show that,  under  $u = K_\zeta \zeta (t)$ with $K_\zeta = KM$,  the solution of the closed-loop system \eqref{dyz2bf} decays to zero exponentially.}
	In fact, applying $u(t)=K_\zeta \zeta (t) $  to \eqref{lisys2} gives
	\begin{equation}
		\begin{aligned}
			x(t+1)=&Ax(t)+BKM\zeta(t)\\
			=&Ax(t)+BK(x(t)+e_x(t))\\
			=&(A+BK)x(t)+BKe_x(t)
		\end{aligned}
	\end{equation}
	Since $e_x(t)$ tends to $\bf{0}$ exponentially, by Lemma 1 of \cite{Huang2016}, $x(t)$ converges to $\bf{0}$ exponentially since $A+BK$ is stable.
	
	Next, applying $u(t)=KM\zeta(t)$ to \eqref{dyz2bf} and using $M\zeta(t)=x(t)+e_x(t)$ gives
	\begin{equation}
		\begin{aligned}
			\zeta(t+1)=&(I_{m+p}\otimes \mathcal{A})\zeta(t)+\begin{bmatrix}
				{\bf0}\\I_p\otimes b
			\end{bmatrix}CM\zeta(t) \\
			&+B_\zeta K M\zeta(t)  - \begin{bmatrix}
				{\bf0} \\ I_p\otimes b
			\end{bmatrix} C e_x(t)  \\
			=&(I_{m+p}\otimes \mathcal{A})\zeta(t)+ \left  (\begin{bmatrix}
				{\bf0}\\I_p\otimes b
			\end{bmatrix}C +B_\zeta K \right ) x(t) \\
			&+  B_\zeta K   e_x(t)  	
		\end{aligned}
	\end{equation}
	Since both $x(t)$ and $e_x(t)$ converge to $\bf{0}$ exponentially and $(I_{m+p}\otimes \mathcal{A})$ is stable, the solution of $\zeta(t)$ goes to $\bf{0}$ exponentially by Lemma 1 of \cite{Huang2016} again. Thus,  we conclude that the pair $(A_\zeta, B_\zeta)$ must be stabilizable.

Only if part:
We will prove that the pair $(A,B)$ is stabilizable by contradition. Assume that the pair $(A,B)$ is not stabilizable which implies  $\exists |\lambda_1| \geq 1$, $\exists s_1 \neq {\bf 0} \in \mathbb{R}^n$, such that 
\begin{equation}\label{contradict1}
\begin{aligned}
s_1^T\begin{bmatrix}\lambda_1 I_n-A & B \end{bmatrix}=\bf{0}. 
\end{aligned}
\end{equation}
{\color{black}
Note that the pair $(A, [B~L])$ is stabilizable since $A+\begin{bmatrix} B & L\end{bmatrix}\begin{bmatrix} \bf{0} \\ -C\end{bmatrix}=A-LC$ is stable by Assumption \ref{ass1}. By PBH test, 
\begin{equation}\label{contradict2}
\begin{aligned}
s_1^T\begin{bmatrix}\lambda_1 I_n-A & B &L \end{bmatrix}\neq \bf{0}.
\end{aligned}
\end{equation}}\eqref{contradict1} and \eqref{contradict2} imply $s_1^TL \neq \bf{0}$. Thus, $s_1^TM \neq \bf{0}$ by the definition of $M$. With the pair $(A_\zeta, B_\zeta)$ stabilizable, $s_1^TM \begin{bmatrix}\lambda_1 I_{n_\zeta}-A_\zeta & B_\zeta \end{bmatrix}\neq \bf{0}$.
	\begin{equation}
		\begin{aligned}
			s_1^TM \begin{bmatrix}\lambda_1 I_{n_\zeta}-A_\zeta & B_\zeta \end{bmatrix}=s_1^T \begin{bmatrix}\lambda_1 I_n-A &  B\end{bmatrix}\begin{bmatrix}M & \bf{0}\\ \bf{0} & I\end{bmatrix} \neq \bf{0}
		\end{aligned}
	\end{equation}
	which contradicts the fact that $s_1^T\begin{bmatrix}\lambda_1 I_n-A & B \end{bmatrix}=\bf{0}$.
	Thus, the pair $(A,B)$ is stabilizable.
\end{Prf}

\subsection{Proof of Theorem \ref{thm10}}
\begin{Prf}
	If part: Let  $L$ be such that  $A-LC$ is stable, $K$ be such that $A+B K$ is stable, and $K_\zeta = K M$. {\color{black}Then, applying the control law $u(t)=  K_\zeta \zeta(t) $ to \eqref{lisys2} and \eqref{dyz2bf} gives the following closed-loop system:
		\begin{equation} \label{closedloop system}
			\begin{aligned}
				\begin{bmatrix}
					x(t+1)\\
					\zeta(t+1)
				\end{bmatrix}
				=\begin{bmatrix}
					A & BK_\zeta\\
					\begin{bmatrix}
						{\bf0}\\I_p\otimes b
					\end{bmatrix} C& (I_{m+p}\otimes \mathcal{A})+ B_\zeta K_\zeta
				\end{bmatrix}
				\begin{bmatrix}
					x(t)\\
					\zeta(t)
				\end{bmatrix}
			\end{aligned}
		\end{equation}
		Consider the nonsingular transformation $H=\begin{bmatrix}
			I_n & -M\\
			\bf{0} & I_{n_\zeta}
		\end{bmatrix}$. 
		\begin{equation} \label{nonsingularH}
			\begin{aligned}
				&H \begin{bmatrix}
					A & BK_\zeta\\
					\begin{bmatrix}
						{\bf0}\\I_p\otimes b
					\end{bmatrix} C& (I_{m+p}\otimes \mathcal{A})+ B_\zeta K_\zeta
				\end{bmatrix} H^{-1}\\
				=&\begin{bmatrix}
					A-LC &  \bf{0} \\
					\begin{bmatrix}
						{\bf0}\\I_p\otimes b
					\end{bmatrix} C& A_\zeta+ B_\zeta K_\zeta
				\end{bmatrix} 
			\end{aligned}
	\end{equation}}Since $A_\zeta+B_\zeta K_\zeta$ is stable by Lemma \ref{lem2},
	the closed-loop system \eqref{closedloop system}
	is exponentially stable.
	
	Only if part: If the closed-loop system \eqref{closedloop system} is stable, by \eqref{nonsingularH}, $(A_\zeta+B_\zeta K_\zeta) $ must be stable. By Lemma \ref{lem2},  $(A, B)$ must be stabizable.
\end{Prf}

\subsection{Proof of Theorem \ref{thmdeadbeat3}}
\begin{Prf}
	If part: Under Assumption 1, let $L$ be such that all the eigenvalues of $A-LC$ equal zero. Then, $(A-LC)^n=\bf{0}$ and,  hence, by \eqref{ex}, for any $e_x(0) \in \mathbb{R}^n$,  $e_x(t)=\bf{0}$ for all $t \geq n$.

	Let $K_\zeta = K M$. Using the relation  $x(t)=M\zeta(t)-e_x(t)$, we have, for all $t \geq n$,   $u(t)= K_\zeta \zeta(t) =Kx(t)$.
	Applying the output feedback  control law \eqref{outputc}   to \eqref{lisys2} renders  the  closed-loop system the following form, for $t \geq n$,
	
	\begin{equation*}
		\begin{aligned}
			x(t+1)=&(A+BK)x(t)  \\
			\zeta(t+1)=& (I_{m+p}\otimes \mathcal{A})
			\zeta(t) \\
			&+\begin{bmatrix}
				I_m\otimes b\\ {\bf0}
			\end{bmatrix}Kx(t)+\begin{bmatrix}
				{\bf0}\\I_p\otimes b
			\end{bmatrix} y (t)
		\end{aligned}
	\end{equation*}
	Since $K$ is such that $(A+BK)^n=\bf{0}$, we have $x(t)=\bf{0}$ for all $t \geq 2n$. As a result, for $ t \geq 2n$,
	the  $\zeta(t)$ dynamics reduces to $\zeta(t+1)=(I_{m+p}\otimes \mathcal{A})
	\zeta(t)$. Since the eigenvalues of $ \mathcal{A}$ are the same as those of $A-LC$, $\mathcal{A}^n=\bf 0$. Thus, all the solutions of  the closed-loop system are driven to zero after $t \geq 3n$.
	
	Only if part: 
{\color{black}We will prove that there exists a $K \in \mathbb{R}^{m \times n}$ such that $(A+BK)^n = \bf 0$ by contradiction.} Suppose $\forall K \in \mathbb{R}^{m \times n}$, $(A+BK)^n \neq \bf 0$,  which implies the existence of  $\lambda_1 \neq 0$, $s_1  \neq {\bf 0} \in \mathbb{R}^n$ such that
\begin{equation}\label{contradict3}
\begin{aligned}
s_1^T\begin{bmatrix}\lambda_1 I_n-A & B \end{bmatrix}=\bf{0}. 
\end{aligned}
\end{equation}

If all the solutions of the closed-loop system \eqref{closedloop system} are driven to zero in a finite time, then, by \eqref{nonsingularH}, all the eigenvalues of $A_\zeta+B_\zeta K_\zeta$ and $A-LC$ must be zero. Thus, by PBH test,
\begin{equation}\label{contradict4}
\begin{aligned}
s_1^T\begin{bmatrix}\lambda_1 I_n-A  &L \end{bmatrix}\neq \bf{0}
\end{aligned}
\end{equation}
By  \eqref{contradict3} and \eqref{contradict4}, $s_1^T L \neq \bf 0$, which implies $s_1^T M \neq \bf 0$ by the definition of $M$. Since all the eigenvalues of $A_\zeta+B_\zeta K_\zeta$  must be zero, by PBH test, 
\begin{equation*}
	\begin{aligned}
		s_1^TM \begin{bmatrix}\lambda_1 I_{n_\zeta}-A_\zeta & B_\zeta \end{bmatrix}=s_1^T \begin{bmatrix}\lambda_1 I_n-A &  B\end{bmatrix}\begin{bmatrix}M & \bf{0}\\ \bf{0} & I\end{bmatrix} \neq \bf{0}
	\end{aligned}
\end{equation*}
which contradicts the fact that  $s_1^T\begin{bmatrix}\lambda_1 I_n-A & B \end{bmatrix}=\bf{0}$.
The proof is thus completed.
\end{Prf}

\end{document}